\numberwithin{equation}{section}
     \newtheorem{thm}{Theorem}[section]
     \newtheorem{cor}[thm]{Corollary}
     \newtheorem{prop}[thm]{Proposition}
     \newtheorem{lem}[thm]{Lemma}
\theoremstyle{definition}
      \newtheorem{defn}{Definition}[section]
\theoremstyle{remark}
     \newtheorem{rem}{Remark}[section]
\newcommand{\R}{\mathbb{R}}
\newcommand{\Z}{\mathbb{Z}}
\newcommand{\cC}{\mathcal{C}}
\newcommand{\cL}{\mathcal{L}}
\newcommand{\cS}{\mathcal{S}}
\newcommand{\BMO}{\mathrm{BMO}}
\newcommand{\Lip}{\mathrm{Lip}}
\newcommand{\supp}{\operatorname{supp}}
\newcommand{\loc}{\mathrm{loc}}
\newcommand{\comp}{\mathrm{comp}}
\newcommand{\ls}{\lesssim}
\newcommand{\gs}{\gtrsim}
\newcommand{\vp}{\varphi}
\newcommand{\tvp}{\tilde\varphi}
\newcommand{\cPhi}{\widetilde\Phi}
\newcommand{\cGdec}{\mathcal{G}^{\rm dec}}
\newcommand{\cGinc}{\mathcal{G}^{\rm inc}}
\newcommand{\Cic}{C^{\infty}_{\comp}}
\newcommand{\LP}{L^{\Phi}}
\newcommand{\LPs}{L^{\Psi}}
\newcommand{\LcP}{L^{\widetilde\Phi}}
\newcommand{\LPp}{L^{(\Phi,\vp)}}
\newcommand{\cLPp}{\mathcal{L}^{(\Phi,\vp)}}
\newcommand{\LPsp}{L^{(\Psi,\vp)}}
\newcommand{\LPsps}{L^{(\Psi,\psi)}}
\newcommand{\cLPsps}{\mathcal{L}^{(\Psi,\psi)}}
\newcommand{\LTp}{L^{(\Theta,\vp)}}
\newcommand{\Ir}{I_{\rho}}
\newcommand{\Ia}{I_{\alpha}}
\newcommand{\Ma}{M_{\alpha}}
\newcommand{\Mr}{M_{\rho}}
\newcommand{\Ms}{M^{\sharp}}
\newcommand{\Md}{M^{\mathrm{dy}}}
\newcommand{\Msd}{M^{\sharp,\mathrm{dy}}}
\newcommand{\cQd}{\mathcal{Q}^{\mathrm{dy}}}
\newcommand{\biP}{{\it{\overline\varPhi}}}
\newcommand{\iPy}{{\it{\Phi_Y}}}
\newcommand{\biPy}{{\it{\overline\Phi_Y}}}
\newcommand{\dtwo}{\Delta_2}
\newcommand{\ntwo}{\nabla_2}
\newcommand{\bdtwo}{\overline\Delta_2}
\newcommand{\bntwo}{\overline\nabla_2}
\newcommand{\dlim}{\displaystyle\lim}
\newcommand{\sgn}{\mathrm{sgn}}
\newcommand{\msckw}{%
\footnotetext{\hspace{-0.35cm} 2010 {\it Mathematics Subject Classification}. 
42B35, 46E30, 42B20, 42B25.
\endgraf{\it Key words and phrases.} 
Orlicz-Morrey space, Campanato space, singular integral, fractional integral, commutator. \par
% $^*$Corresponding author.
}
}
\begin{document}

\baselineskip=18pt

\title{%
Commutators of integral operators with functions in Campanato spaces
on Orlicz-Morrey spaces
\msckw
}
\author{Minglei Shi, Ryutaro Arai and Eiichi Nakai}
\date{}

\maketitle

\begin{abstract}
We consider the commutators $[b,T]$ and $[b,I_{\rho}]$ 
on Orlicz-Morrey spaces,
where $T$ is a Calder\'on-Zygmund operator, 
$I_{\rho}$ is a generalized fractional integral operator
and $b$ is a function in generalized Campanato spaces.
We give a necessary and sufficient condition for the boundedness
of the commutators on Orlicz-Morrey spaces.
To do this 
we prove the boundedness of generalized fractional maximal operators
on Orlicz-Morrey spaces.
Moreover,
we introduce Orlicz-Campanato spaces
and establish their relations to Orlicz-Morrey spaces.
\end{abstract}

%%%===================================================================
%%%===================================================================
\section{Introduction}\label{sec:intro}
%%%===================================================================
%%%===================================================================

Let $\R^n$ be the $n$-dimensional Euclidean space.
Let $b\in\BMO(\R^n)$
and $T$ be a Calder\'on-Zygmund singular integral operator.
In 1976
Coifman, Rochberg and Weiss~\cite{Coif-Roch-Weiss1976}
proved that the commutator $[b,T]=bT-Tb$ 
is bounded on $L^p(\R^n)$ ($1<p<\infty$), that is,
\begin{equation*}
 \|[b,T]f\|_{L^p}=\|bTf-T(bf)\|_{L^p}\le C\|b\|_{\BMO}\|f\|_{L^p},
\end{equation*}
where $C$ is a positive constant independent of $b$ and $f$.
For the fractional integral operator $\Ia$, 
Chanillo~\cite{Chanillo1982} 
proved the boundedness of $[b,\Ia]$ in 1982.
Coifman, Rochberg and Weiss~\cite{Coif-Roch-Weiss1976} and Chanillo~\cite{Chanillo1982} 
also gave the necessary conditions for the boundedness.
These results were extended to 
Orlicz spaces by Janson~\cite{Janson1978} (1978)
and to Morrey spaces by Di~Fazio and Ragusa~\cite{DiFazio-Ragusa1991} (1991).
For other extensions and generalization, see 
\cite{Arai-Nakai2018RMC,Fu-Yang-Yuan2012,Fu-Yang-Yuan2014,Guliyev-Deringoz-Hasanov2017,Iida2016,Komori-Mizuhara2003,Mizuhara1999,Nakamura-Sawano2017Coll,Shirai2006SCMJ,Shirai2006Hokkaido}, etc.

%%%-------------------------------------------------------------------
In this paper we investigate the commutators $[b,T]$ and $[b,\Ir]$
on Orlicz-Morrey spaces, 
where $T$ is a Calder\'on-Zygmund operator, 
$\Ir$ is a generalized fractional integral operator 
and $b$ is a function in generalized Campanato spaces.
The Orlicz-Morrey spaces unify Orlicz and Morrey spaces,
and the Campanato spaces unify $\BMO$ and Lipschitz spaces.
Therefore, our results contain many previous results as corollaries.
The boundedness of $T$ and $\Ir$ on the Orlicz-Morrey spaces
are known by \cite{Nakai2008KIT} and \cite{Nakai2008Studia}, respectively.
To prove the boundedness of $[b,T]$ and $[b,\Ir]$,
we need the generalized fractional maximal operator $\Mr$
and the sharp maximal operator $\Ms$.
We show the boundedness of $\Mr$ on Orlicz-Morrey spaces
under weaker conditions than $\Ir$.
Moreover,
we introduce Orlicz-Campanato spaces
and establish their relations to Orlicz-Morrey spaces.

%%%-------------------------------------------------------------------
First we recall the Orlicz-Morrey space.
We denote by $B(a,r)$ 
the open ball centered at $a\in\R^n$ and of radius $r$.
For a function $f\in L^1_{\loc}(\R^n)$ and a ball $B$, 
let 
\begin{equation*}\label{mean}
  f_{B}=\fint_{B} f=\fint_{B} f(y)\,dy=\frac1{|B|}\int_{B} f(y)\,dy,
\end{equation*}
where $|B|$ is the Lebesgue measure of the ball $B$.

%=============================
\begin{defn}[Orlicz-Morrey space]\label{defn:OrliczMorrey}
%=============================
For a Young function $\Phi:[0,\infty]\to[0,\infty]$,
a function $\vp:(0,\infty)\to(0,\infty)$
and a ball $B=B(a,r)$, 
let
\begin{equation}\label{PpB norm}
 \|f\|_{\Phi,\vp,B}
 = 
 \inf\left\{ \lambda>0: 
  \frac{1}{\vp(r)}
  \fint_B \!\Phi\!\left(\frac{|f(x)|}{\lambda}\right)\! dx \le 1
           \right\}.
\end{equation}
Let $\LPp(\R^n)$ be the set of all functions $f$ such that the 
following functional is finite: 
\begin{equation}\label{OM norm}
 \|f\|_{\LPp} 
 =
 \sup_{B}  \|f\|_{\Phi,\vp,B},
\end{equation}
where the supremum is taken over all balls $B$ in $\R^n$.
(For the definition of the Young function, see the next section.)
\end{defn}

Then $\|f\|_{\LPp}$ is a norm and thereby $\LPp(\R^n)$ is a Banach space.
If $\vp(r)=1/r^{n}$, 
then $\LPp(\R^n)$ coincides with the Orlicz space $\LP(\R^n)$
equipped with the norm 
\begin{equation*}
 \|f\|_{\LP} 
 =
 \inf\left\{\lambda>0:
  \int_{\R^n} \Phi\left(\frac{|f(x)|}{\lambda}\right) dx
      \le 1
     \right\}.
\end{equation*}
If $\Phi(t)=t^p$, $1\le p<\infty$, 
then $\LPp(\R^n)$ coincides with the generalized Morrey space $L^{(p,\vp)}(\R^n)$
equipped with the norm %$\|f\|_{\LPp}$ 
\begin{equation*}
 \|f\|_{L^{(p,\vp)}} 
 =
 \sup_{B=B(a,r)} \left(\frac1{\vp(r)}\fint_B |f(x)|^p\,dx\right)^{1/p}.
\end{equation*}

The Orlicz-Morrey space $\LPp(\R^n)$ was first studied in \cite{Nakai2004KIT}.
For other kinds of Orlicz-Morrey spaces, see 
\cite{Deringoz-Guliyev-Nakai-Sawano-Shi2019Posi,Deringoz-Guliyev-Samko2014,Guliyev-Hasanov-Sawano-Noi2016,Sawano-Sugano-Tanaka2012}, etc.

%%%-------------------------------------------------------------------
Secondly, we recall the definition of the generalized Campanato space. 

%=============================
\begin{defn}\label{defn:gC}
%=============================
For $p\in[1,\infty)$ and a function $\psi:(0,\infty)\to(0,\infty)$, 
let
$\cL_{p,\psi}(\R^n)$ be the set of all functions $f$ such that the 
following functional is finite: 
\begin{equation*}
 \|f\|_{\cL_{p,\psi}}
  =\sup_{B=B(a,r)}\frac1{\psi(r)}\left(\fint_{B}|f(y)-f_{B}|^p\,dy\right)^{1/p},
\end{equation*}
where
the supremum is taken over all balls $B(a,r)$ in $\R^n$.
\end{defn}
Then $\|f\|_{\cL_{p,\psi}(\R^n)}$ is a norm modulo constant functions 
and thereby $\cL_{p,\psi}(\R^n)$ is a Banach space.
If $p=1$ and $\psi\equiv1$, then $\cL_{p,\psi}(\R^n)=\BMO(\R^n)$.
If $p=1$ and $\psi(r)=r^{\alpha}$ ($0<\alpha\le1$), 
then $\cL_{p,\psi}(\R^n)$ coincides with $\Lip_{\alpha}(\R^n)$.
If $\psi$ is almost increasing (see \eqref{almost}), 
then $\cL_{p,\psi}(\R^n)=\cL_{1,\psi}(\R^n)$.

%%%-------------------------------------------------------------------
Thirdly, we recall the generalized fractional integral operator $\Ir$.
For a function $\rho:(0,\infty)\to(0,\infty)$,
the operator $\Ir$ is defined by
\begin{equation}\label{Ir}
 \Ir f(x)=\int_{\R^n}\frac{\rho(|x-y|)}{|x-y|^n}f(y)\,dy,
 \quad x\in\R^n,
\end{equation}
where we always assume that
\begin{equation}\label{int rho}
 \int_0^1\frac{\rho(t)}{t}\,dt<\infty.
\end{equation}
If $\rho(r)=r^{\alpha}$, $0<\alpha<n$, 
then $\Ir$ is the usual fractional integral operator $\Ia$.
The condition \eqref{int rho} is needed for the integral in \eqref{Ir} 
to converge for bounded functions $f$ with compact support.
In this paper we also assume that 
there exist positive constants $C$, $K_1$ and $K_2$ with $K_1<K_2$ such that, for all $r>0$,
\begin{equation}\label{sup rho}
 \sup_{r\le t\le 2r}\rho(t)
 \le
 C\int_{K_1r}^{K_2r}\frac{\rho(t)}{t}\,dt.
% \quad\text{for all $r>0$}.
\end{equation}
The condition \eqref{sup rho} was considered in \cite{Perez1994}.
The operator $\Ir$ was introduced in \cite{Nakai2001Taiwan} 
whose partial results were announced in 
\cite{Nakai2000ISAAC}.
See also \cite{Nakai2001SCMJ,Nakai2002Lund,Nakai2004KIT,Nakai2008Studia,Nakai-Sumitomo2001SCMJ}.

%---------------------------------------------------------------------
In this paper we prove the boundedness of $[b,T]$ and $[b,\Ir]$ such that
\begin{align*}
 \|[b,T]f\|_{\LPsp}
 &\le 
 C\|b\|_{\cL_{1,\psi}}\|f\|_{\LPp},
\\
 \|[b,\Ir]f\|_{\LPsp}
 &\le 
 C\|b\|_{\cL_{1,\psi}}\|f\|_{\LPp},
\end{align*}
under suitable assumptions.
We also prove that, 
if $[b,T]$ or $[b,\Ir]$ is bounded from $\LPp(\R^n)$ to $\LPsp(\R^n)$,
then $b$ is  in $\cL_{1,\psi}(\R^n)$
and the operator norms dominated by $\|b\|_{\cL_{1,\psi}}$.
To do this we first recall generalized Young functions and
define Orlicz-Morrey spaces with generalized Young functions
in Section~\ref{sec:Orlicz}.
Then we state the main results in Section~\ref{sec:main}.
We need generalized Young functions to show the boundedness of the commutators.
In Section~\ref{sec:prop}
we give basic properties on generalized Young functions and Orlicz-Morrey spaces.

To prove the main results
we show the boundedness of 
the generalized fractional maximal operator $\Mr$ in Section~\ref{sec:Mr}.
For a function $\rho:(0,\infty)\to(0,\infty)$, let
\begin{equation}\label{Mr}
 \Mr f(x)=\sup_{B(a,r)\ni x}\rho(r)\fint_{B(a,r)}|f(y)|\,dy,
 \quad x\in\R^n,
\end{equation}
where the supremum is taken over all balls $B(a,r)$ containing $x$.
We do not assume the condition \eqref{int rho} or \eqref{sup rho} on the definition of $\Mr$.
The operator $\Mr$ was studied in \cite{Sawano-Sugano-Tanaka2011} on generalized Morrey spaces.
If $\rho(r)=|B(0,r)|^{\alpha/n}, 0<\alpha<n$, 
then $M_{\rho}$ is the usual fractional maximal operator $\Ma$.
If $\rho\equiv1$, then $\Mr$ is 
the Hardy-Littlewood maximal operator $M$.
It is known that 
the usual fractional maximal operator $\Ma$ 
is dominated pointwise by 
the fractional integral operator $\Ia$,
that is,
$\Ma f(x)\le C\Ia|f|(x)$ for all $x\in\R^n$.
Then the boundedness of $\Ma$ follows from one of $\Ia$.
However, 
we need a better estimate on $\Mr$ than $\Ir$
to prove the boundedness of the commutators  $[b,T]$ and $[b,\Ir]$.

We also need the sharp maximal operator $\Ms$ defined by 
\begin{equation}\label{Ms}
 \Ms f(x)=\sup_{B\ni x}\fint_{B}|f(y)-f_B|\,dy,
 \quad x\in\R^n,
\end{equation}
where the supremum is taken over all balls $B$ containing $x$.
In section~\ref{sec:OMOC}
we show that,
if $f_{B(0,r)}\to0$ as $r\to\infty$,
then
\begin{equation}\label{}
 \|f\|_{\LPp}
 \le
 C\|\Ms f\|_{\LPp},
\end{equation}
by using the relation between Orlicz-Campanato and Orlicz-Morrey spaces.
In Section~\ref{sec:well} 
we show the well definedness of the commutators $[b,T]$ and $[b,\Ir]$
for functions in Orlicz-Morrey spaces.
Since neither $C^{\infty}_{\comp}(\R^n)$ nor $L^{\infty}_{\comp}(\R^n)$ is dense in Orlicz-Morrey spaces,
we need to show the well definedness carefully.
Finally, using all of them,
we prove the main results in Section~\ref{sec:proof}.

At the end of this section, we make some conventions. 
Throughout this paper, we always use $C$ to denote a positive constant 
that is independent of the main parameters involved 
but whose value may differ from line to line.
Constants with subscripts, such as $C_p$, are dependent on the subscripts.
If $f\le Cg$, we then write $f\ls g$ or $g\gs f$; 
and if $f \ls g\ls f$, we then write $f\sim g$.

%%%===================================================================
%%%===================================================================
\section{Orlicz and Orlicz-Morrey spaces}\label{sec:Orlicz}
%%%===================================================================
%%%===================================================================

In this section we first recall generalized Young functions.
Next, we recall the definitions of Orlicz and Orlicz-Morrey spaces
with generalized Young functions. 
The Orlicz space is introduced by \cite{Orlicz1932,Orlicz1936}.
For the theory of Orlicz spaces,
see \cite{Kita2009,Kokilashvili-Krbec1991,Krasnoselsky-Rutitsky1961,Maligranda1989,Rao-Ren1991}
for example.
The Orlicz-Morrey spaces investigated 
in \cite{Nakai2004KIT,Nakai2008Studia,Nakai2008KIT}, etc.

For an increasing (i.e. nondecreasing) function 
$\Phi:[0,\infty]\to[0,\infty]$,
let
\begin{equation}\label{aP bP} 
 a(\Phi)=\sup\{t\ge0:\Phi(t)=0\}, \quad 
 b(\Phi)=\inf\{t\ge0:\Phi(t)=\infty\},
\end{equation} 
with convention $\sup\emptyset=0$ and $\inf\emptyset=\infty$.
Then $0\le a(\Phi)\le b(\Phi)\le\infty$.

Let $\biP$ be the set of all increasing functions
$\Phi:[0,\infty]\to[0,\infty]$
such that
\begin{align}\label{ab}
 &0\le a(\Phi)<\infty, \quad 0<b(\Phi)\le\infty, \\
 &\dlim_{t\to+0}\Phi(t)=\Phi(0)=0, \label{lim_0} \\
 &\text{$\Phi$ is left continuous on $[0,b(\Phi))$}, \label{left cont} \\
 &\text{if $b(\Phi)=\infty$, then } 
 \dlim_{t\to\infty}\Phi(t)=\Phi(\infty)=\infty, \label{left cont infty} \\
 &\text{if $b(\Phi)<\infty$, then } 
 \dlim_{t\to b(\Phi)-0}\Phi(t)=\Phi(b(\Phi)) \ (\le\infty). \label{left cont b}
\end{align}

In what follows,
if an increasing and left continuous function $\Phi:[0,\infty)\to[0,\infty)$ satisfies
\eqref{lim_0} and $\dlim_{t\to\infty}\Phi(t)=\infty$,
then we always regard that $\Phi(\infty)=\infty$ and that $\Phi\in\biP$.

For $\Phi\in\biP$,
we recall the generalized inverse of $\Phi$
in the sense of O'Neil \cite[Definition~1.2]{ONeil1965}.

%%%===========================
\begin{defn}\label{defn:ginverse}
%%%===========================
For $\Phi\in\biP$ and $u\in[0,\infty]$, let
\begin{equation}\label{inverse}
 \Phi^{-1}(u)
 = 
\begin{cases}
 \inf\{t\ge0: \Phi(t)>u\}, & u\in[0,\infty), \\
 \infty, & u=\infty.
\end{cases}
\end{equation}
\end{defn}

Let $\Phi\in\biP$. 
Then $\Phi^{-1}$ is finite, increasing and right continuous on $[0,\infty)$
and positive on $(0,\infty)$.
If $\Phi$ is bijective from $[0,\infty]$ to itself, 
then $\Phi^{-1}$ is the usual inverse function of $\Phi$.
Moreover, if $\Phi\in\biP$, then
\begin{equation}\label{inverse ineq}
 \Phi(\Phi^{-1}(u)) \le u \le  \Phi^{-1}(\Phi(u))
 \quad\text{for all $u\in[0,\infty]$},
\end{equation}
which is a generalization of Property 1.3 in \cite{ONeil1965},
see \cite[Proposition~2.2]{Shi-Arai-Nakai2019Taiwan}.

For $\Phi, \Psi\in\biP$, 
we write $\Phi\approx\Psi$
if there exists a positive constant $C$ such that
\begin{equation*} 
     \Phi(C^{-1}t)\le\Psi(t)\le\Phi(Ct)
     \quad\text{for all}\ t\in[0,\infty].
\end{equation*} 
For functions $P,Q:[0,\infty]\to[0,\infty]$, 
we write $P\sim Q$ 
if there exists a positive constant $C$ such that
\begin{equation*} 
     C^{-1}P(t)\le Q(t)\le CP(t)
     \quad\text{for all}\ t\in[0,\infty].
\end{equation*} 
Then, for $\Phi,\Psi\in\biP$, 
\begin{equation}\label{approx equiv}
 \Phi\approx\Psi \quad \Leftrightarrow \quad \Phi^{-1}\sim\Psi^{-1},
\end{equation}
see \cite[Lemma~2.3]{Shi-Arai-Nakai2019Taiwan}.

%%%-------------------------------------------------------------------
Next we recall the definition of the Young function and give its generalization.

%%%===========================
\begin{defn}[Young function and its generalization]\label{defn:Young}
%%%===========================
A function $\Phi\in\biP$ is called Young function 
(or sometimes also called Orlicz function) 
if %$a(\Phi)<b(\Phi)$,
$\Phi$ is convex on $[0,b(\Phi))$.
Let $\iPy$ be the set of all Young functions.
Let $\biPy$ be the set of all $\Phi\in\biP$ such that
$\Phi\approx\Psi$ for some $\Psi\in \iPy$.
\end{defn}

%%%-------------------------------------------------------------------

%%%===========================
\begin{defn}\label{defn:D2 n2}
%%%===========================
\begin{enumerate}
\item 
A function $\Phi\in\biP$ is said to satisfy the $\Delta_2$-condition,
denoted by $\Phi\in\bdtwo$, 
if there exists a constant $C>0$ such that
\begin{equation}\label{Delta2}
 \Phi(2t)\le C\Phi(t) 
 \quad\text{for all } t>0.
\end{equation}
\item
A function $\Phi\in\biP$ is said to satisfy the $\nabla_2$-condition,
denoted by $\Phi\in\bntwo$, 
if there exists a constant $k>1$ such that
\begin{equation}\label{nabla2}
 \Phi(t)\le\frac1{2k}\Phi(kt) 
 \quad\text{for all } t>0.
\end{equation}
\item
Let $\Delta_2=\iPy\cap\bdtwo$ and $\nabla_2=\iPy\cap\bntwo$.
\end{enumerate}
\end{defn}

%%%-------------------------------------------------------------------

Let $(\Omega,\mu)$ be a measure space, 
and let $L^0(\Omega)$ be the set of all measurable functions on $\Omega$.
Then the Orlicz space is defined by the following.

%%%===========================
\begin{defn}[Orlicz space]\label{defn:LP}
%%%===========================
For $\Phi\in\biPy$, let
\begin{align*}
  \LP(\Omega)
  &= \left\{ f\in L^0(\Omega):
     \int_{\Omega} \Phi(\epsilon |f(x)|)\,d\mu(x)<\infty
               \ \text{for some} \ \epsilon>0 
    \right\}, \\
  \|f\|_{\LP(\Omega)} &=
  \inf\left\{ \lambda>0: 
    \int_{\Omega} \Phi\left(\frac{|f(x)|}{\lambda}\right) d\mu(x)
      \le 1
      \right\}.
\end{align*}
\end{defn}

Then 
$\|\cdot\|_{\LP(\Omega)}$ is a quasi-norm
and thereby $\LP(\Omega)$ is a quasi-Banach space.
If $\Phi\in\iPy$, then $\|\cdot\|_{\LP(\Omega)}$ is a norm 
and thereby $\LP(\Omega)$ is a Banach space.
For $\Phi,\Psi\in\biPy$,
if $\Phi\approx\Psi$, then $\LP(\Omega)=\LPs(\Omega)$
with equivalent quasi-norms.
In the case $\Omega=\R^n$
we always write
$\|\cdot\|_{\LP}$ instead of $\|\cdot\|_{\LP(\R^n)}$,
omitting $(\R^n)$.

%%%-------------------------------------------------------------------

For $\Phi\in\biPy$ and %$\vp\in\cGdec$, 
$\vp:(0,\infty)\to(0,\infty)$,
we define the Orlicz-Morrey space $\LPp(\R^n)$ 
together with $\|\cdot\|_{\LPp}$
by \eqref{OM norm}.
For a ball $B=B(a,r)$,
let $\mu_B = \frac{dx}{|B|\vp(r)}$.
Then we have the following relation:
\begin{equation}\label{LP B norm}
 \|f\|_{\Phi,\vp,B}=\|f\|_{\LP(B,\,\mu_B)}.
\end{equation}
Because of the relation \eqref{LP B norm},
$\|\cdot\|_{\LPp}$ is a quasi-norm, 
and thereby $\LPp(\R^n)$ is a quasi-Banach space.
If $\Phi\in\iPy$, then $\|\cdot\|_{\LPp}$ is a norm 
and thereby $\LPp(\R^n)$ is a Banach space.
If $\Phi\approx\Psi$ and $\vp\sim\psi$, then
$\LPp(\R^n)=\LPsps(\R^n)$ with equivalent quasi-norms.

%%%-------------------------------------------------------------------
Next,
we say that a function $\theta:(0,\infty)\to(0,\infty)$ 
satisfies the doubling condition if
there exists a positive constant $C$ such that,
for all $r,s\in(0,\infty)$,
\begin{equation}\label{doubling}
 \frac1C\le\frac{\theta(r)}{\theta(s)}\le C,
 \quad\text{if} \ \ \frac12\le\frac{r}{s}\le2.
\end{equation}
We say that $\theta$ is almost increasing (resp. almost decreasing) if
there exists a positive constant $C$ such that, for all $r,s\in(0,\infty)$,
\begin{equation}\label{almost}
 \theta(r)\le C\theta(s) \quad
 (\text{resp.}\ \theta(s)\le C\theta(r)),
 \quad\text{if $r<s$}.
\end{equation}

In this paper we consider the following class of $\vp:(0,\infty)\to(0,\infty)$.
%=============================
\begin{defn}\label{defn:cG}
%=============================
{\rm(i)} Let $\cGdec$ be the set of all functions $\vp:(0,\infty)\to(0,\infty)$
such that 
$\vp$ is almost decreasing
and that
$r\mapsto\vp(r)r^n$ is almost increasing.
That is,
there exists a positive constant $C$ such that, 
for all $r,s\in(0,\infty)$,
\begin{equation*}
 C\vp(r)\ge \vp(s),
 \quad
 \vp(r)r^n\le C\vp(s)s^n, 
 \quad
 \text{if} \ r<s.
\end{equation*}
{\rm(ii)} Let $\cGinc$ be the set of all functions $\vp:(0,\infty)\to(0,\infty)$
such that 
$\vp$ is almost increasing
and that
$r\mapsto\vp(r)/r$ is almost decreasing.
That is,
there exists a positive constant $C$ such that, 
for all $r,s\in(0,\infty)$,
\begin{equation*}
 \vp(r)\le C\vp(s),
 \quad
 C\vp(r)/r\ge \vp(s)/s, 
 \quad
 \text{if} \ r<s.
\end{equation*}
\end{defn}

If $\vp\in\cGdec$ or $\vp\in\cGinc$, then $\vp$ satisfies the doubling condition $\eqref{doubling}$.
Let $\psi:(0,\infty)\to(0,\infty)$.
If $\psi\sim\vp$ for some $\vp\in\cGdec$ (resp. $\cGinc$),
then $\psi\in\cGdec$ (resp. $\cGinc$).

%%%===============================================
\begin{rem}\label{rem:vp bijective}
%%%===============================================
Let $\vp\in\cGdec$.
Then there exists $\tvp\in\cGdec$ such that $\vp\sim\tvp$
and that 
$\tvp$ is 
continuous and strictly decreasing, 
see \cite[Proposition~3.4]{Nakai2008Studia}.
Moreover,
if 
\begin{equation}\label{cG* dec}
 \dlim_{r\to0}\vp(r)=\infty,
 \quad
 \dlim_{r\to\infty}\vp(r)=0,
\end{equation}
then $\tvp$ is bijective from $(0,\infty)$ to itself.
\end{rem}

%%%===================================================================
%%%===================================================================
\section{Main results}\label{sec:main}
%%%===================================================================
%%%===================================================================

First we recall the definition of Calder\'on-Zygmund operators 
following \cite{Yabuta1985}.
Let $\Omega$ be the set of all increasing functions $\omega:(0,\infty)\to(0,\infty)$
such that 
$\int_0^1\frac{\omega(t)}{t}dt<\infty$. 

%%%===============================================
\begin{defn}[{standard kernel}]\label{defn:Kernel}
%%%===============================================
Let $\omega\in\Omega$. 
A continuous function $K(x,y)$ on $\R^n\times\R^n\setminus\{(x,x)\in\R^{2n}\}$
is said to be a standard kernel of type $\omega$ 
if the following conditions are satisfied:
\begin{gather} 
     |K(x,y)|\le \frac{C}{|x-y|^n} 
     \quad\text{for}\quad x\not=y,
                                                  \label{CZ1} \\     
  \begin{split}
     |K(x,y)-K(x,z)|+|K(y,x)-K(z,x)|
     \le 
     \frac{C}{|x-y|^{n}} 
     \,\omega\!\left(\frac{|y-z|}{|x-y|}\right) & \\
     \text{for}\quad 2|y-z|\le |x-y|. &
  \end{split}
                                                  \label{CZ2} 
\end{gather} 
\end{defn}

%%%===============================================
\begin{defn}[{Calder\'on-Zygmund operator}]\label{defn:CZO}
%%%===============================================
Let $\omega\in\Omega$. 
A linear operator $T$ from $\cS(\R^n)$ to $\cS'(\R^n)$
is said to be a Calder\'on-Zygmund operator of type $\omega$,
if $T$ is bounded on $L^2(\R^n)$
and there exists a standard kernel $K$ of type $\omega$ such that,
for $f\in \Cic(\R^n)$,
\begin{equation} 
     Tf(x)=\int_{\R^n} K(x,y)f(y)\,dy, \quad x\notin\supp f.
                                                  \label{CZ3}
\end{equation} 
\end{defn}

%%%===============================================
\begin{rem}\label{rem:CZO}
%%%===============================================
If $x\notin\supp f$, then $K(x,y)$ is continuous on $\supp f$ with respect to $y$.
Therefore,
if \eqref{CZ3} holds for $f\in \Cic(\R^n)$,
then \eqref{CZ3} holds for $f\in L^1_{\comp}(\R^n)$.
\end{rem}

It is known by \cite[Theorem 2.4]{Yabuta1985} that 
any Calder\'on-Zygmund operator of type $\omega\in\Omega$
is bounded on $L^p(\R^n)$ for $1<p<\infty$.
This result was extended to Orlicz-Morrey spaces $\LPp(\R^n)$
by \cite{Nakai2008KIT}
as the following:
Assume that $\vp\in\cGdec$ 
and that
there exists a positive constant $C$ such that,
for all $r\in(0,\infty)$,
\begin{equation}\label{int vp}
 \int_r^{\infty}\frac{\vp(t)}{t}\,dt\le C\vp(r).
\end{equation}
Let $\Phi\in\Delta_2\cap\nabla_2$.
For $f\in\LPp(\R^n)$, we define $Tf$ on each ball $B$ by
\begin{equation}\label{Tf def}
 Tf(x)=T(f\chi_{2B})(x)+\int_{\R^n\setminus 2B}K(x,y)f(y)\,dy,
 \quad x\in B.
\end{equation}
Then the first term in the right hand side %of \eqref{Tf def} 
is well defined, 
since $f\chi_{2B}\in\LP(\R^n)$,
and the integral of the second term converges absolutely.
Moreover, 
$Tf(x)$ is independent of the choice of the ball containing $x$.
By this definition we can show that $T$ is a bounded operator on $\LPp(\R^n)$.

For functions $f$ in Orlicz-Morrey spaces, we define $[b,T]f$ on each ball $B$ by
\begin{equation}\label{bTf def}
 [b,T]f(x)=[b,T](f\chi_{2B})(x)+\int_{\R^n\setminus 2B}(b(x)-b(y))K(x,y)f(y)\,dy,
 \quad x\in B,
\end{equation}
see Remark~\ref{rem:well def T} for its well definedness.
Then we have the following theorem.

%=============================
\begin{thm}\label{thm:comm T}
%=============================
Let $\Phi,\Psi\in\biPy$, $\vp\in\cGdec$ and $\psi\in\cGinc$.
Let $T$ be a Calder\'on-Zygmund operator of type $\omega\in\Omega$.
\begin{enumerate}
\item 
Let $\Phi,\Psi\in\bdtwo\cap\bntwo$
and $\int_0^1\frac{\omega(t)\log(1/t)}{t}\,dt<\infty$.
Assume that $\vp$ satisfies \eqref{int vp}
and that
there exists a positive constant $C_0$ such that,
for all $r\in(0,\infty)$,
\begin{equation}\label{vp psi T}
 \psi(r)\Phi^{-1}(\vp(r))
 \le C_0\Psi^{-1}(\vp(r)).
% \quad\text{for}\quad r>0.
\end{equation}
If $b\in\cL_{1,\psi}(\R^n)$,
then
$[b,T]f$ in \eqref{bTf def} is well defined for all $f\in\LPp(\R^n)$ 
and 
there exists a positive constant $C$, independent of $b$ and $f$,
such that
\begin{equation*}
 \|[b,T]f\|_{\LPsp}
 \le
 C\|b\|_{\cL_{1,\psi}}\|f\|_{\LPp}.
\end{equation*}

\item
Conversely, 
assume that 
there exists a positive constant $C_0$ such that,
for all $r\in(0,\infty)$,
\begin{equation}\label{vp psi T inverse}
 C_0\psi(r)\Phi^{-1}(\vp(r))
 \ge \Psi^{-1}(\vp(r)).
% \quad\text{for}\quad r>0
\end{equation}
If $T$ is a convolution type such that
\begin{equation}
 Tf(x)=p.v.\int_{\R^n}K(x-y)f(y)\,dy
\end{equation}
with homogeneous kernel $K$ %of degree $-n$, i.e.
satisfying $K(x)=|x|^{-n}K(x/|x|)$, $\int_{S^{n-1}}K=0$, 
$K\in C^{\infty}(S^{n-1})$ and $K\not\equiv0$,
and if $[b,T]$ is bounded from $\LPp(\R^n)$ to $\LPsp(\R^n)$,
then $b$ is in $\cL_{1,\psi}(\R^n)$ and
there exists a positive constant $C$, independent of $b$, 
such that
\begin{equation*}
 \|b\|_{\cL_{1,\psi}}
 \le
 C\|[b,T]\|_{\LPp\to\LPsp},
\end{equation*}
where $\|[b,T]\|_{\LPp\to\LPsp}$ is the operator norm of $[b,T]$
from $\LPp(\R^n)$ to $\LPsp(\R^n)$.
\end{enumerate}
\end{thm}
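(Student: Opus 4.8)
The plan is to handle the two parts separately. For part (i), the boundedness, I would follow the now-standard route via the sharp maximal function, adapted to the Orlicz--Morrey setting. First I would fix $b\in\cL_{1,\psi}(\R^n)$ and $f\in\LPp(\R^n)$ and establish the well definedness of $[b,T]f$ in \eqref{bTf def}: the local piece $[b,T](f\chi_{2B})$ makes sense because $f\chi_{2B}\in\LP(\R^n)$ and $b\in\cL_{1,\psi}$ is locally in $\BMO$, so $(b-b_{2B})f\chi_{2B}$ lies in a suitable Orlicz space on which $T$ acts; the tail integral converges absolutely by the size estimate \eqref{CZ1} on $K$, the growth control on $b$ coming from $b\in\cL_{1,\psi}$ with $\psi\in\cGinc$, and condition \eqref{int vp} on $\vp$ (this is Remark~\ref{rem:well def T}, proved in Section~\ref{sec:well}). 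The heart of the argument is a pointwise estimate of the form
\begin{equation*}
 \Ms\big([b,T]f\big)(x)
 \ls
 \|b\|_{\cL_{1,\psi}}\Big(\Mr\big(\Phi\text{-averaged }f\big)(x) + (\text{terms controlled by }M \text{ and }T)\Big),
\end{equation*}
where $\rho$ is chosen so that $\rho(r)=\psi(r)\,\big(\text{local }L^{\Phi}\text{-normalizing factor}\big)$ and the hypothesis \eqref{vp psi T} is exactly what lets one pass from the $\Mr$-bound on $\LPp$ to the target space $\LPsp$. Concretely, one splits $f=f\chi_{2B}+f\chi_{(2B)^c}$ in the usual Di~Fazio--Ragusa manner, writes $[b,T]f=(b-b_{2B})Tf - T((b-b_{2B})f)$, and estimates the mean oscillation of each piece over $B$ using the generalized Hölder inequality in Orlicz spaces, the $L^q$-boundedness of $T$ (coming from $T$ bounded on $L^2$ plus Calder\'on--Zygmund theory), the smoothness condition \eqref{CZ2} together with $\int_0^1\frac{\omega(t)\log(1/t)}{t}\,dt<\infty$ to absorb the logarithmic loss produced by the $\BMO$-type growth of $b$ across dyadic annuli, and the doubling of $\vp,\psi$.

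Having the pointwise bound, I would invoke the reverse sharp-function estimate $\|g\|_{\LPp}\ls\|\Ms g\|_{\LPp}$ from Section~\ref{sec:OMOC}, which applies once one checks $([b,T]f)_{B(0,r)}\to0$ as $r\to\infty$ (again using the tail decay guaranteed by \eqref{int vp} and the $\LPsp$-membership of the pieces), then apply the boundedness of $\Mr$ on Orlicz--Morrey spaces from Section~\ref{sec:Mr} and finally the change of Young function/weight encoded in \eqref{vp psi T}. Along the way one also needs $T$ and $M$ themselves to be bounded on $\LPp$, which is the cited result of \cite{Nakai2008KIT} together with $\Phi\in\bdtwo\cap\bntwo$ (so that, up to $\approx$, one may take $\Phi\in\Delta_2\cap\nabla_2$).

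For part (ii), the necessity, I would use the classical Coifman--Rochberg--Weiss duality-free argument adapted to this generality: since $K\in C^\infty(S^{n-1})$, $\int_{S^{n-1}}K=0$, $K\not\equiv0$, there is a ball $B_0$ in $S^{n-1}$ on which $K$ keeps one sign and is bounded below; then for any cube/ball $Q=B(a,r)$ one selects $Q'$ a translate of $Q$ at distance comparable to $r$ in a direction so that $\frac{x-y}{|x-y|}\in B_0$ and $|x-y|\sim r$ for $x\in Q$, $y\in Q'$, giving $\frac1{|x-y|^n}K\big(\frac{x-y}{|x-y|}\big)\gtrsim r^{-n}$ there. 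Testing $[b,T]$ on $f=\sgn(b-b_{Q'})\chi_{Q'}$ and localizing to $Q$ yields
\begin{equation*}
 \fint_Q|b(x)-b_{Q'}|\,dx
 \ls
 \fint_Q\big|[b,T]f(x)\big|\,dx + (\text{error})
 \ls
 r^{n}\,\vp(r)\,\frac{\|f\|_{\Phi,\vp,Q'}^{-1}}{\text{(normalization)}}\,\|[b,T]\|\,\|f\|_{\LPp},
\end{equation*}
and after computing $\|\chi_{Q'}\|_{\LPp}\sim 1/\Phi^{-1}(\vp(r))$ and $\|\chi_Q\|_{\LPsp}\sim1/\Psi^{-1}(\vp(r))$ one arrives, via \eqref{vp psi T inverse}, at $\frac1{\psi(r)}\fint_Q|b-b_{Q}|\le\frac1{\psi(r)}\fint_Q 2|b-b_{Q'}|\ls\|[b,T]\|$, uniformly in $Q$; that is $b\in\cL_{1,\psi}$ with the claimed norm bound. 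A minor point is replacing $b_{Q'}$ by $b_Q$, handled by the triangle inequality and the doubling of $\psi$.

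I expect the main obstacle to be part (i), specifically the pointwise sharp-function estimate: one must choose the auxiliary function $\rho$ so that it simultaneously fits the hypotheses of the $\Mr$-boundedness theorem of Section~\ref{sec:Mr} (which asks less than \eqref{int rho}, \eqref{sup rho}) and interfaces correctly with \eqref{vp psi T}, and one must control the annular sum $\sum_k \big(\fint_{2^{k+1}B}|b-b_{2B}|\big)\,\omega(2^{-k})$-type series --- here the $\log$-integrability of $\omega$ is essential because $|b_{2^{k+1}B}-b_{2B}|$ grows like $k$ (when $\psi\equiv1$) rather than staying bounded, and in the genuinely generalized-Campanato case one must instead track $\psi(2^k r)$ and use $\psi\in\cGinc$. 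Verifying the vanishing mean condition needed to apply the reverse estimate, and the well definedness in \eqref{bTf def}, are the other technical points but are routine given the machinery of Sections~\ref{sec:well} and~\ref{sec:OMOC}.
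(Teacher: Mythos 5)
For part (i) your plan follows essentially the same route as the paper: establish the pointwise sharp-function estimate $\Ms([b,T]f)(x)\ls\|b\|_{\cL_{1,\psi}}\big((M_{\psi^\eta}(|Tf|^\eta)(x))^{1/\eta}+(M_{\psi^\eta}(|f|^\eta)(x))^{1/\eta}\big)$ (Proposition~\ref{prop:pointwise CZO}), apply the $\Mr$-boundedness of Theorem~\ref{thm:Mr} (after raising to the power $\eta$ and using Lemma~\ref{lem:eta} to keep $\Phi((\cdot)^{1/\eta})\in\bntwo$), check the vanishing mean $\fint_{B_r}[b,T]f\to0$, and close via Corollary~\ref{cor:Ms}; your remarks on \eqref{vp psi T}, the $\log$-integrability of $\omega$, and the boundedness of $T$ and $M$ on $\LPp$ are all in the right place.

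For part (ii), however, your argument differs from the paper's and has a genuine gap. You propose the ``constant-sign region'' variant of the Coifman--Rochberg--Weiss argument: choose $Q'$ a translate of $Q$ so that $K(x-y)$ is one-signed and $\sim r^{-n}$ for $x\in Q$, $y\in Q'$, then test against $\sgn(b-b_{Q'})\chi_{Q'}$. The difficulty is that $K(x-y)$ varies by a factor of order $1$ across $Q\times Q'$: writing $[b,T]\chi_{Q'}(x)=(b(x)-b_{Q'})\int_{Q'}K(x-y)\,dy+\int_{Q'}(b_{Q'}-b(y))K(x-y)\,dy$, the second term is of size $\fint_{Q'}|b-b_{Q'}|$, i.e.\ exactly the quantity one is trying to bound, so one obtains at best $\fint_Q|b-b_{Q'}|\ls\|[b,T]\|\cdot\psi(r)+\fint_{Q'}|b-b_{Q'}|$, which yields no information. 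The usual fix --- push $Q'$ to distance $Nr$ so that the variation of $K$ is a factor $1/N$ smaller --- fails here: when $\psi\in\cGinc$ is not bounded (e.g.\ $\psi(r)\sim r$, the Lipschitz case), the compensating growth $\psi(Nr)\ls N\psi(r)$ exactly cancels the $1/N$ gain, and the error cannot be absorbed. This is precisely why the paper instead uses Janson's method: expand $1/K(z)$ in an absolutely convergent Fourier series $1/K(z)=\sum a_je^{iv_j\cdot z}$ on a neighborhood where $K\ne0$, multiply and divide by $K(x-y)$ using homogeneity, and thereby write $\int_B|b-b_{B'}|$ \emph{exactly} as $C\sum a_j\int([b,T]g_j)h_j$ with modulated characteristic functions $g_j,h_j$ --- the kernel cancels identically, there is no error term, and one concludes from $\|g_j\|_{\LPp}\sim1/\Phi^{-1}(\vp(r))$, $\fint_B|([b,T]g_j)|\le\Psi^{-1}(\vp(r))\|[b,T]g_j\|_{\LPsp}$ and \eqref{vp psi T inverse}. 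You should replace the constant-sign argument by this Fourier-expansion argument (or supply an a priori finiteness-plus-absorption argument that is robust to $\psi$ unbounded, which is not routine).
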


%=============================
\begin{rem}\label{rem:comm T}
%=============================
From the theorem above we have the several corollaries.
\begin{enumerate}
\item 
Take $\Phi(t)=t^p$,
then we have the result for generalized Morrey spaces $L^{(p,\vp)}(\R^n)$.
This case is known by \cite[Theorem~2.1]{Arai-Nakai2018RMC},
which is an extension of Di Fazio and Ragusa~\cite[Theorem~1]{DiFazio-Ragusa1991}.
\item
Take $\vp(r)=1/r^n$,
then we have the result for Orlicz spaces $\LP(\R^n)$.
This case is an extension of Janson~\cite[Theorem]{Janson1978}.
\item
Take $\Phi(t)=\Psi(t)=t^p$, $\vp(r)=1/r^n$ and $\psi\equiv1$, 
then $\LPp(\R^n)=\LPsp(\R^n)=L^p(\R^n)$ and $\cL_{1,\psi}(\R^n)=\BMO(\R^n)$.
This case is the result by Coifman, Rochberg and Weiss~\cite{Coif-Roch-Weiss1976}.
\end{enumerate}
\end{rem}

%%%-------------------------------------------------------------------

To state the result on the commutator $[b,\Ir]$
we first mention the boundedness of $\Ir$ on the Orlicz-Morrey spaces.
Let $\Phi,\Psi\in\biPy$ and $\vp\in\cGdec$.
If $\Phi\in\bntwo$ and 
\begin{equation*}
 \int_0^r\frac{\rho(t)}{t}\,dt\;{\Phi}^{-1}(\vp(r)) 
  +\int_r^{\infty}\frac{\rho(t)\,\Phi^{-1}(\vp(t))}{t}\,dt
 \ls
 \Psi^{-1}(\vp(r))
\end{equation*}
holds for all $r\in(0,\infty)$,
then $\Ir$ is bounded from $\LPp(\R^n)$ to $\LPsp(\R^n)$,
see \cite[Theorem~7.3]{Nakai2008Studia}.
More precisely,
in \cite[Theorem~7.3]{Nakai2008Studia}
the author assumed that $\Phi$ and $\Psi$ are bijective,
but it can be extended to $\Phi,\Psi\in\biPy$
by the boundedness of $\Ir$ from $\LP(\R^n)$ to $\LPs(\R^n)$ with $\Phi\in\bntwo$,
which was proven by \cite{Deringoz-Guliyev-Nakai-Sawano-Shi2019Posi}.
See also \cite{Shi-Arai-Nakai2019Taiwan}.

Now we state the result on the commutator $[b,\Ir]$.
For the well definedness of $[b,\Ir]$ on $\LPp(\R^n)$, see Remark~\ref{rem:well def}.

%=============================
\begin{thm}\label{thm:comm Ir}
%=============================
Let $\Phi,\Psi\in\biPy$, $\vp\in\cGdec$, $\psi\in\cGinc$
and $\rho:(0,\infty)\to(0,\infty)$.
Assume that $\rho$ satisfies \eqref{int rho} and \eqref{sup rho}.
\begin{enumerate}
\item 
Let $\Phi,\Psi\in\bdtwo\cap\bntwo$. 
Assume that $\vp$ satisfies \eqref{int vp}
and that $r\mapsto\rho(r)/r^{n-\epsilon}$ %and $r\mapsto\Psi^{-1}(r)/\Phi^{-1}(r)$ are 
is almost decreasing for some $\epsilon\in(0,n)$.
Assume also that 
there exist positive constants $C_{\rho}$, $C_0$, $C_1$ 
and a function $\Theta\in\bntwo$ such that,
for all $r,s\in(0,\infty)$,
\begin{align} %\label{PsiPhi dec}
\label{rho/rn}
 &C_{\rho}\,\frac{\rho(r)}{r^{n-\epsilon}}\ge \frac{\rho(s)}{s^{n-\epsilon}}, \ \text{if $r<s$},
\\
 \label{rho conti}
 &\left|\frac{\rho(r)}{r^n}-\frac{\rho(s)}{s^n}\right| 
  \le
  C_{\rho}\,|r-s|\frac1{r^{n+1}}\int_0^r\frac{\rho(t)}{t}\,dt,
 \quad \text{if $\frac12\le\frac rs\le2$},
\\
 \label{comm Ir A} 
 &\int_0^r\frac{\rho(t)}{t}\,dt\;{\Phi}^{-1}(\vp(r)) 
  +\int_r^{\infty}\frac{\rho(t)\,\Phi^{-1}(\vp(t))}{t}\,dt
 \le
 C_0\Theta^{-1}(\vp(r)),
\\
 \label{comm Mr A}
 &\psi(r)\Theta^{-1}(\vp(r))
 \le
  C_1 \Psi^{-1}(\vp(r)).
\end{align}
If $b\in\cL_{1,\psi}(\R^n)$,
then
$[b,\Ir]f$ is well defined for all $f\in\LPp(\R^n)$ 
and 
there exists a positive constant $C$, independent of $b$ and $f$,
such that
\begin{equation}\label{comm Ir}
 \|[b,\Ir]f\|_{\LPsp}\le C\|b\|_{\cL_{1,\psi}}\|f\|_{\LPp}.
\end{equation}

\item
Conversely, 
assume that $0<\alpha<n$
and that there exists a positive constant $C_0$ such that,
for all $r\in(0,\infty)$,
\begin{equation*}
 \Psi^{-1}(\vp(r))
 \le 
 C_0r^{\alpha}\psi(r)\Phi^{-1}(\vp(r)).
\end{equation*}
If $[b,\Ia]$ is bounded from $\LPp(\R^n)$ to $\LPsp(\R^n)$,
then $b$ is in $\cL_{1,\psi}(\R^n)$ 
and there exists a positive constant $C$, independent of $b$, such that
\begin{equation}\label{comm Ir c}
 \|b\|_{\cL_{1,\psi}}\le C\|[b,\Ia]\|_{\LPp\to\LPsp},
\end{equation}
where $\|[b,\Ia]\|_{\LPp\to\LPsp}$ is the operator norm of $[b,\Ia]$ from $\LPp(\R^n)$ to $\LPsp(\R^n)$.
\end{enumerate}
\end{thm}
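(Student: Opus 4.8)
\emph{Proof strategy.} The argument for part~(i) follows the classical three-step scheme (pointwise sharp maximal estimate; boundedness of the relevant generalized maximal operators; the inequality $\|g\|_{\LPsp}\ls\|\Ms g\|_{\LPsp}$), and part~(ii) is Janson's test-function argument, simplified by the positivity of the kernel.

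\emph{Part (i).} The well-definedness of $[b,\Ir]f$ for $f\in\LPp(\R^n)$ and the independence of the value given by the decomposition (the analogue of \eqref{bTf def}) of the ball containing the point will be established in Section~\ref{sec:well} (cf.\ Remark~\ref{rem:well def}); the same estimates there show that $([b,\Ir]f)_{B(0,R)}\to0$ as $R\to\infty$. Granting this, the first step is a pointwise sharp maximal estimate of the form
\[
 \Ms\bigl([b,\Ir]f\bigr)(x)
 \ls
 \|b\|_{\cL_{1,\psi}}\Bigl(\widetilde M_{\psi}(\Ir f)(x)+M_{\sigma}f(x)\Bigr),
 \qquad x\in\R^n,
\]
where $\widetilde M_{\psi}$ is a $\psi$-weighted maximal operator built from a local Orlicz average over balls (the average being dictated by the generalized Hölder inequality), and $\sigma(r)\sim\psi(r)\int_0^r\rho(t)/t\,dt$. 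To obtain this, for a ball $B=B(a,r)\ni x$ and $x'\in B$ one writes
\[
 [b,\Ir]f(x')
 =(b(x')-b_B)\,\Ir f(x')
   -\Ir\bigl((b-b_B)f\chi_{2B}\bigr)(x')
   -\Ir\bigl((b-b_B)f\chi_{(2B)^c}\bigr)(x'),
\]
subtracts the constant $\Ir\bigl((b-b_B)f\chi_{(2B)^c}\bigr)(a)$, and controls the three resulting averages over $B$: the first and second by the generalized Hölder inequality in Orlicz spaces together with the local control $\|b-b_B\|\ls\psi(r)\|b\|_{\cL_{1,\psi}}$ in the relevant Orlicz average (valid since $\psi\in\cGinc$ is almost increasing, whence $\cL_{1,\psi}=\cL_{p,\psi}$, together with a John--Nirenberg estimate), using also \eqref{int rho} and the kernel size for the second; the third --- the genuinely delicate term --- by the kernel-difference estimate \eqref{rho conti} under the growth restriction \eqref{rho/rn}, summing over the dyadic coronas $2^{k+1}B\setminus2^kB$ against $|b_{2^kB}-b_B|\ls k\,\psi(2^kr)\|b\|_{\cL_{1,\psi}}$ and using the gain $r/|a-y|\sim2^{-k}$ to defeat the logarithmic loss.

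The second step bounds the two operators on the right-hand side. Since $r\mapsto\rho(r)/r^{n-\epsilon}$ is almost decreasing, $\int_0^r\rho(t)/t\,dt$ is comparable to a generalized fractional weight treated in Section~\ref{sec:Mr}, and \eqref{comm Ir A}--\eqref{comm Mr A} are exactly the hypotheses that make $M_{\sigma}$ bounded from $\LPp(\R^n)$ to $\LPsp(\R^n)$ --- this is precisely where the sharper boundedness of $\Mr$ rather than of $\Ir$ is needed. For $\widetilde M_{\psi}(\Ir f)$ one composes the boundedness of $\Ir$ from $\LPp(\R^n)$ to $\LTp(\R^n)$, which holds by $\Phi\in\bntwo$ and \eqref{comm Ir A} (the version recalled before the theorem, with $\Psi$ replaced by $\Theta$; see \cite{Nakai2008Studia,Deringoz-Guliyev-Nakai-Sawano-Shi2019Posi}), with the boundedness of $\widetilde M_{\psi}$ from $\LTp(\R^n)$ to $\LPsp(\R^n)$, which follows from $\Theta\in\bntwo$ and \eqref{comm Mr A}. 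Finally, since $\Psi\in\bdtwo\cap\bntwo$ and $([b,\Ir]f)_{B(0,R)}\to0$, Section~\ref{sec:OMOC} gives $\|[b,\Ir]f\|_{\LPsp}\ls\|\Ms([b,\Ir]f)\|_{\LPsp}$, and combining the three steps yields \eqref{comm Ir}. I expect the main obstacle to be the pointwise estimate of the far term: one has to choose the generalized maximal operators appearing on the right so sharply that \eqref{comm Ir A}--\eqref{comm Mr A}, and nothing stronger, suffice --- which is exactly why the $\Mr$ theory of Section~\ref{sec:Mr} must be developed under weaker hypotheses than the $\Ir$ theory.

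\emph{Part (ii).} Fix a ball $B=B(a,r)$ and pick a companion ball $\tB=B(\tilde a,r)$ with $|a-\tilde a|\sim r$, so that for $x\in B$, $y\in\tB$ one has $|x-y|\sim r$ and $(x-y)/r$ lies in a fixed region on which, by homogeneity and smoothness, $|z|^{n-\alpha}=\sum_m a_m e^{i\langle v_m,z\rangle}$ with $\sum_m|a_m|<\infty$. Setting $s(x)=\overline{\sgn(b(x)-b_{\tB})}$ and inserting $1=|x-y|^{\alpha-n}|x-y|^{n-\alpha}$ into $\fint_{\tB}(b(x)-b(y))\,dy=b(x)-b_{\tB}$, one gets
\[
 \int_B|b(x)-b_{\tB}|\,dx
 =\frac{r^{n-\alpha}}{|\tB|}\sum_m a_m\int_B s(x)\,e^{i\langle v_m/r,\,x\rangle}\,
   [b,\Ia]\bigl(e^{-i\langle v_m/r,\,\cdot\rangle}\chi_{\tB}\bigr)(x)\,dx ,
\]
whence $\fint_B|b-b_{\tB}|\ls r^{-\alpha}\sum_m|a_m|\sup_m\fint_B|[b,\Ia]h_m|$ with $h_m=e^{-i\langle v_m/r,\cdot\rangle}\chi_{\tB}$. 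Using the elementary Orlicz--Morrey bounds $\fint_B|g|\ls\Psi^{-1}(\vp(r))\|g\|_{\LPsp}$ and $\|h_m\|_{\LPp}=\|\chi_{\tB}\|_{\LPp}\sim1/\Phi^{-1}(\vp(r))$ (the latter using $\vp\in\cGdec$), one arrives at
\[
 \frac1{\psi(r)}\fint_B|b-b_B|
 \ls\frac1{\psi(r)}\fint_B|b-b_{\tB}|
 \ls\frac{\Psi^{-1}(\vp(r))}{r^{\alpha}\psi(r)\,\Phi^{-1}(\vp(r))}\,\|[b,\Ia]\|_{\LPp\to\LPsp}
 \ls\|[b,\Ia]\|_{\LPp\to\LPsp}
\]
by the standing hypothesis; taking the supremum over all balls $B$ gives \eqref{comm Ir c}. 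The only technical points are the construction of the Fourier expansion on a region of size comparable to the separation of $B$ and $\tB$ (uniform in $r$ by scaling) and the computation of $\|\chi_{\tB}\|_{\LPp}$; the same argument, with $1/K(z)$ in place of $|z|^{n-\alpha}$, proves the corresponding part of Theorem~\ref{thm:comm T}.
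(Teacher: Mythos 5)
Your strategy for part~(i) is the paper's strategy: the pointwise estimate
$\Ms([b,\Ir]f)\ls\|b\|_{\cL_{1,\psi}}\bigl((M_{\psi^\eta}(|\Ir f|^{\eta}))^{1/\eta}+(M_{(\rho^*\psi)^{\eta}}(|f|^{\eta}))^{1/\eta}\bigr)$
of Proposition~\ref{prop:pointwise Ir}, the boundedness of both maximal operators via Theorem~\ref{thm:Mr} under \eqref{comm Ir A}--\eqref{comm Mr A}, the $\Ir$ boundedness from $\LPp(\R^n)$ to $\LTp(\R^n)$, and Corollary~\ref{cor:Ms}; the paper implements the ``extra room'' by the $\eta$-power device rather than a local Orlicz average, but these are interchangeable (both rest on Lemma~\ref{lem:eta}/\ref{lem:p}). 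Your unwinding of part~(ii), with $|z|^{n-\alpha}$ in place of $1/K(z)$ and $\tB=B(\tilde a,r)$ at distance $\sim r$, is exactly what the paper's one-line remark intends, and the computation is correct.

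The one genuine gap is the claim that Section~\ref{sec:well} already gives $([b,\Ir]f)_{B(0,R)}\to0$. It does not: Lemma~\ref{lem:int2Bc comm} controls only the average of the far piece $\int_{\R^n\setminus 2B_R}(b(x)-b(y))\rho(|x-y|)|x-y|^{-n}f(y)\,dy$, while the near piece $\fint_{B_R}[b,\Ir](f\chi_{B_{2R}})$ is estimated in the paper by $\Psi^{-1}(\vp(R))\,\|[b,\Ir](f\chi_{B_{2R}})\|_{\LPsp}$, which requires the bound \eqref{comm Ir} itself for the compactly supported function $f\chi_{B_{2R}}$. To avoid circularity, the paper first proves the theorem for compactly supported $f$ --- there the mean decay is obtained from the direct kernel bound $|\Ir f(x)|\ls\sup_{|x|/2\le t\le 3|x|/2}\rho(t)/t^n\,\|f\|_{L^1}$, the almost decreasingness of $\rho(t)/t^{n-\epsilon}$, and a H\"older split with a carefully chosen exponent $\nu$ dictated by $\Psi\in\bdtwo$ --- and only then bootstraps to general $f$ via Lemma~\ref{lem:int2Bc comm}. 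Your sketch needs this two-case structure before Corollary~\ref{cor:Ms} can be invoked.
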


%=============================
\begin{rem}\label{rem:comm Ir}
%=============================
From the theorem above we have the several corollaries.
\begin{enumerate}
\item 
Take $\Phi(t)=t^p$,
then we have the result for generalized Morrey spaces $L^{(p,\vp)}(\R^n)$.
This case is known by \cite[Theorem~2.2]{Arai-Nakai2018RMC}.
\item
Take $\vp(r)=1/r^n$,
then we have the result for Orlicz spaces $\LP(\R^n)$.
This case is known by \cite[Theorem~3.13]{Shi-Arai-Nakai2019Taiwan}.
\item
Take $\rho(r)=r^{\alpha}$, $\Phi(t)=t^p$, $\Psi(t)=t^q$, $\vp(r)=1/r^n$ and $\psi\equiv1$, 
then $\LPp(\R^n)=L^p(\R^n)$, $\LPsp(\R^n)=L^q(\R^n)$ and $\cL_{1,\psi}(\R^n)=\BMO(\R^n)$.
This case is the result by Chanillo~\cite{Chanillo1982}.
\end{enumerate}
\end{rem}

%---------------------------------------------------------------------
For the case $\psi\in\cGdec$, we have the following theorems.

%=================================================
\begin{thm}\label{thm:comm T dec}
%=================================================
Let $\Phi,\Psi\in\bntwo$, $\Phi_0\in\bdtwo$ 
and $\vp, \psi, \theta \in \cGdec$.
Assume that 
\begin{equation}\label{gHolder OM}
 \Phi_0^{-1}(t\psi(r)) \Phi^{-1}(t\vp(r)) \ls \Psi^{-1}(t\theta(r))
\end{equation}
 for all $r,t\in(0,\infty)$.
Assume also that $\vp, \psi, \theta$ satisfy \eqref{int vp}.
Let $T$ be a Calder\'on-Zygmund operator of type $\omega\in\Omega$.
If $b\in\cL^{(\Phi_0,\psi)}(\R^n)$,
then
$[b,T]f$ is well defined for all $f\in \LPp(\R^n)$ 
and 
there exists a positive constant $C$, independent of $b$ and $f$,
such that
\begin{equation*}
 \|[b,T]f\|_{L^{(\Psi,\theta)}}
 \le
 C\|b\|_{\cL^{(\Phi_0,\psi)}}\|f\|_{\LPp}.
\end{equation*}
\end{thm}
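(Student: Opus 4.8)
The plan is to reduce the commutator estimate to a pointwise sharp-maximal bound combined with the mapping properties already available in the paper. First I would fix $f\in\LPp(\R^n)$ and, as in the standard Coifman--Rochberg--Weiss scheme, estimate $\Ms([b,T]f)$ pointwise. The classical computation gives, for a suitable exponent and any ball $B\ni x$,
\begin{equation*}
 \fint_B\bigl|[b,T]f(y)-c_B\bigr|\,dy
 \ls
 \|b\|_{\cL^{(\Phi_0,\psi)}}\Bigl(\cM_{\Phi_0}f(x)+\cM_{\Phi_0}(Tf)(x)+T_{\ast}f(x)\Bigr),
\end{equation*}
where $c_B$ is an appropriate constant (e.g.\ $c_B=T\bigl((b-b_{2B})f\chi_{\R^n\setminus 2B}\bigr)(\text{center})$), $\cM_{\Phi_0}$ is the Orlicz maximal operator attached to $\Phi_0$, and $T_{\ast}$ is a maximal-truncation-type operator controlled by $M$ and $T$. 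The role of $b\in\cL^{(\Phi_0,\psi)}$ rather than $\BMO$ is exactly that the oscillation of $b$ over a ball of radius $r$ is weighted by $\psi(r)$, which is what forces the appearance of $\Phi_0^{-1}(t\psi(r))$ in the Hölder-type hypothesis \eqref{gHolder OM}.

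Next I would run each term through the Orlicz--Morrey machinery. For the maximal terms, note that $\cM_{\Phi_0}$ is (by the generalized Hölder inequality in Orlicz spaces) essentially $\Mr$-type with $\rho$ read off from $\Phi_0$, and one invokes the boundedness of the generalized fractional maximal operator on Orlicz--Morrey spaces proved in Section~\ref{sec:Mr}; the hypothesis \eqref{gHolder OM} with the factor $\Phi^{-1}(t\vp(r))$ and target $\Psi^{-1}(t\theta(r))$ is precisely the scaling condition that makes this bounded from $\LPp$ into $L^{(\Psi,\theta)}$ (here one uses $\Psi\in\bntwo$ so the target norm behaves well, and $\Phi_0\in\bdtwo$ so the Orlicz maximal operator is controlled). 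For the singular-integral terms $Tf$ and $T_{\ast}f$, one uses that $T$ and its maximal truncation are bounded on $\LPp$ (the boundedness of $T$ on $\LPp$ recalled before Theorem~\ref{thm:comm T}, together with $\vp$ satisfying \eqref{int vp}), and then applies $\cM_{\Phi_0}$ as before. Finally, to pass from $\Ms([b,T]f)$ back to $[b,T]f$ itself I would invoke the Fefferman--Stein-type inequality $\|g\|_{L^{(\Psi,\theta)}}\ls\|\Ms g\|_{L^{(\Psi,\theta)}}$ established in Section~\ref{sec:OMOC}, after checking that $g=[b,T]f$ satisfies the vanishing-average condition $g_{B(0,r)}\to0$ as $r\to\infty$; this last verification rests on the well-definedness discussion of Section~\ref{sec:well} and on $\theta\in\cGdec$ together with the decay built into \eqref{int vp}.

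The main obstacle I expect is twofold. First, the sharp-maximal pointwise estimate must be carried out with a generalized Campanato function $b$ in place of a $\BMO$ function: the usual proof splits $b=(b-b_{2B})$ and estimates $\fint_{2^{k+1}B}|b-b_{2B}|$ by $k\,\|b\|_{\BMO}$, but here one gets instead a sum $\sum_k \psi(2^k r)$-weighted terms, and making the resulting series converge and be absorbed into $\cM_{\Phi_0}$ requires the $\cGdec$ hypotheses on $\psi$ (and the doubling these imply) in an essential way. Second, the Orlicz-space Hölder inequality needed to turn products like $|b-b_{2B}|\,|f|$ into $\cM_{\Phi_0}f$ must be applied uniformly in the ball and in the scaling parameter $t$, and it is exactly here that hypothesis \eqref{gHolder OM} in its full ``for all $r,t$'' strength is consumed; keeping track of the $\Delta_2/\nabla_2$ assumptions so that all the Orlicz norms involved are mutually comparable is the delicate bookkeeping. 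The rest — boundedness of $\Mr$, of $T$, and the Fefferman--Stein inequality — is quoted directly from earlier sections.
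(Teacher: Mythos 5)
Your proposal recycles the sharp-maximal-function route that the paper uses for Theorem~\ref{thm:comm T}~(i) (the case $\psi\in\cGinc$), but this theorem is the $\psi\in\cGdec$ case, where the paper's proof is entirely different and much shorter. The key point you have missed is Theorem~\ref{thm:C sim M}: because $\psi\in\cGdec$ and $\psi$ satisfies \eqref{int vp}, the Orlicz--Campanato space $\cL^{(\Phi_0,\psi)}(\R^n)$ modulo constants is \emph{isomorphic} to the Orlicz--Morrey space $L^{(\Phi_0,\psi)}(\R^n)$. Consequently, writing $b_0=b-\dlim_{r\to\infty}b_{B(0,r)}$, one has $\|b_0\|_{L^{(\Phi_0,\psi)}}\sim\|b\|_{\cL^{(\Phi_0,\psi)}}$ and $[b,T]f=b_0Tf-T(b_0 f)$ with $b_0$ a genuine Orlicz--Morrey function. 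One then estimates the two pieces directly: $\|b_0Tf\|_{L^{(\Psi,\theta)}}\ls\|b_0\|_{L^{(\Phi_0,\psi)}}\|Tf\|_{\LPp}$ and $\|T(b_0f)\|_{L^{(\Psi,\theta)}}\ls\|b_0f\|_{L^{(\Psi,\theta)}}\ls\|b_0\|_{L^{(\Phi_0,\psi)}}\|f\|_{\LPp}$, both via the generalized H\"older inequality of Lemma~\ref{lem:gHolder OM} with hypothesis \eqref{gHolder OM} and the boundedness of $T$ on $\LPp$ and $L^{(\Psi,\theta)}$. There is no sharp maximal function, no Orlicz maximal operator $\cM_{\Phi_0}$, and no Fefferman--Stein step.

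Beyond being the wrong route, your proposal has a concrete gap: your final step invokes the Fefferman--Stein inequality $\|g\|_{L^{(\Psi,\theta)}}\ls\|\Ms g\|_{L^{(\Psi,\theta)}}$ from Corollary~\ref{cor:Ms}, but that corollary requires $\Psi\in\bdtwo$, whereas Theorem~\ref{thm:comm T dec} assumes only $\Psi\in\bntwo$. (In Theorem~\ref{thm:comm T}~(i), where the sharp-maximal route is actually used, the hypotheses are $\Phi,\Psi\in\bdtwo\cap\bntwo$.) Likewise, the pointwise estimate you write for $\Ms([b,T]f)$ in terms of an Orlicz maximal operator $\cM_{\Phi_0}$ is not the one proved in the paper (Proposition~\ref{prop:pointwise CZO} uses the scalar-weighted operator $M_{\psi^\eta}$ for $\psi\in\cGinc$, not $\cM_{\Phi_0}$), and the asserted boundedness of $\cM_{\Phi_0}$ from $\LPp$ to $L^{(\Psi,\theta)}$ is nowhere established in Section~\ref{sec:Mr}, which treats only $\Mr$ with a scalar weight $\rho$. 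The factorization through Theorem~\ref{thm:C sim M} is precisely what makes the $\cGdec$ case work under the weaker hypotheses stated, and avoids all of these issues.
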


%=================================================
\begin{thm}\label{thm:comm Ir dec}
%=================================================
Let $\Phi\in\bntwo$, $\Phi_0\in\bdtwo$, $\Psi\in\biPy$
and $\vp\in\cGdec$.
Assume that $\rho$ satisfies \eqref{int rho} and \eqref{sup rho}
and that $\vp$ satisfies \eqref{int vp}.
Assume also that there exist $\Psi_0\in\bntwo$ and $\Theta\in\biPy$
such that $\Phi^{-1}\Phi_0^{-1}\sim\Psi_0^{-1}$, $\Phi_0^{-1}\Theta^{-1}\ls\Psi^{-1}$
and \eqref{comm Ir A}.
If $b\in\cL^{(\Phi_0,\vp)}(\R^n)$,
then
$[b,\Ir]f$ is well defined for all $f\in \LPp(\R^n)$ 
and there exists a positive constant $C$, independent of $b$ and $f$,
such that
\begin{equation*}
 \|[b,\Ir]f\|_{\LPsp}
 \le
 C\|b\|_{\cL^{(\Phi_0,\vp)}}\|f\|_{\LPp}.
\end{equation*}
\end{thm}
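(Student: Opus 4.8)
The plan is to reduce the claim to a uniform local estimate. Since $\|g\|_{\LPsp}=\sup_{B}\|g\|_{\Psi,\vp,B}$, it suffices to bound $\|[b,\Ir]f\|_{\Psi,\vp,B}\ls\|b\|_{\cL^{(\Phi_0,\vp)}}\|f\|_{\LPp}$ for an arbitrary ball $B=B(a,r)$, with a constant independent of $B$, $b$, $f$. Fixing $B$, on $B$ we use (as the defining formula, cf. \eqref{bTf def} and Remark~\ref{rem:well def}, whose legitimacy is Section~\ref{sec:well})
\begin{equation*}
 [b,\Ir]f(x)=(b(x)-b_{2B})\Ir f(x)-\Ir\big((b-b_{2B})f\chi_{2B}\big)(x)-\Ir\big((b-b_{2B})f\chi_{(2B)^c}\big)(x),\quad x\in B,
\end{equation*}
and estimate the three pieces $I_1,I_2,I_3$ separately in $\|\cdot\|_{\Psi,\vp,B}$.

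Before that I would collect the tools. From $\Phi\in\bntwo$, \eqref{comm Ir A} and the boundedness theorem of \cite{Nakai2008Studia} (extended to $\biPy$ via \cite{Deringoz-Guliyev-Nakai-Sawano-Shi2019Posi}), $\Ir$ is bounded from $\LPp(\R^n)$ to $\LTp(\R^n)$. Next, using $\Psi_0^{-1}\sim\Phi^{-1}\Phi_0^{-1}$, the fact that $\Phi_0^{-1}(\vp(t))\ls\Phi_0^{-1}(\vp(r))$ for $t\ge r$ (concavity of $\Phi_0^{-1}$ together with $\vp\in\cGdec$ being almost decreasing), \eqref{comm Ir A} and $\Phi_0^{-1}\Theta^{-1}\ls\Psi^{-1}$, one checks that the growth condition required for the boundedness of $\Ir$ from $L^{(\Psi_0,\vp)}(\R^n)$ to $\LPsp(\R^n)$ holds; since $\Psi_0\in\bntwo$, $\Ir$ is bounded between these spaces. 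Finally, from $\Phi^{-1}\Phi_0^{-1}\sim\Psi_0^{-1}$ and $\Phi_0^{-1}\Theta^{-1}\ls\Psi^{-1}$ I obtain the generalized Hölder inequalities $\|uv\|_{\Psi_0,\vp,B}\ls\|u\|_{\Phi,\vp,B}\|v\|_{\Phi_0,\vp,B}$ and $\|uv\|_{\Psi,\vp,B}\ls\|u\|_{\Theta,\vp,B}\|v\|_{\Phi_0,\vp,B}$, and I will use $\fint_B|u|\ls\Phi_0^{-1}(\vp(r))\|u\|_{\Phi_0,\vp,B}$ together with the growth of oscillations $|b_{2^{j}B}-b_{2^{k}B}|\ls\|b\|_{\cL^{(\Phi_0,\vp)}}\sum_{j<i\le k}\Phi_0^{-1}(\vp(2^{i}r))$ across dyadic scales; all of these are basic properties from Section~\ref{sec:prop}.

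With these in hand, $I_1$ and $I_2$ are short. For $I_1$: generalized Hölder on $B$, the doubling of $\vp$ and $\Ir\colon\LPp\to\LTp$ give $\|I_1\|_{\Psi,\vp,B}\ls\|b-b_{2B}\|_{\Phi_0,\vp,2B}\|\Ir f\|_{\LTp}\ls\|b\|_{\cL^{(\Phi_0,\vp)}}\|f\|_{\LPp}$. For $I_2$: generalized Hölder gives $\|(b-b_{2B})f\|_{\Psi_0,\vp,2B}\ls\|b\|_{\cL^{(\Phi_0,\vp)}}\|f\|_{\LPp}$, and since $(b-b_{2B})f\chi_{2B}$ is supported in $2B$ only scales $\lesssim r$ enter, so the ``local part'' of the boundedness of $\Ir$ from $L^{(\Psi_0,\vp)}$ to $\LPsp$ (controlled through the argument of the previous paragraph, using $\int_0^{Cr}\rho(t)/t\,dt\;\Phi^{-1}(\vp(r))\ls\Theta^{-1}(\vp(r))$) gives $\|I_2\|_{\Psi,\vp,B}\ls\|b\|_{\cL^{(\Phi_0,\vp)}}\|f\|_{\LPp}$. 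For the tail $I_3$, by \eqref{sup rho} one has, for $x\in B$,
\begin{equation*}
 |I_3(x)|\ls\sum_{k\ge1}\frac{1}{(2^{k}r)^{n}}\Big(\int_{2^{k-2}r}^{2^{k+2}r}\frac{\rho(t)}{t}\,dt\Big)\int_{2^{k+1}B}|b(y)-b_{2B}|\,|f(y)|\,dy,
\end{equation*}
after which one splits $|b-b_{2B}|\le|b-b_{2^{k+1}B}|+|b_{2^{k+1}B}-b_{2B}|$, estimates the first summand by generalized Hölder (so that $\fint_{2^{k+1}B}|b-b_{2^{k+1}B}||f|\ls\|b\|_{\cL^{(\Phi_0,\vp)}}\|f\|_{\LPp}\Psi_0^{-1}(\vp(2^{k+1}r))$) and the second by the oscillation growth, interchanges the order of summation, and applies \eqref{comm Ir A} at each dyadic scale $2^{j}r$.

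The step I expect to be the real work is the convergence of the resulting double sum in $I_3$ to a quantity $\ls\|b\|_{\cL^{(\Phi_0,\vp)}}\|f\|_{\LPp}\Psi^{-1}(\vp(r))$. After interchanging sums and applying \eqref{comm Ir A}, this reduces to controlling $\sum_{j\ge1}\Phi_0^{-1}(\vp(2^{j}r))\,\Theta^{-1}(\vp(2^{j}r))$: one pulls out $\Theta^{-1}(\vp(2^{j}r))\ls\Theta^{-1}(\vp(r))$ (concavity of $\Theta^{-1}$ and $\vp$ almost decreasing), leaving $\sum_{j\ge1}\Phi_0^{-1}(\vp(2^{j}r))\ls\Phi_0^{-1}(\vp(r))$, which holds because $\Phi_0\in\bdtwo$ makes $\Phi_0^{-1}(2^{-j}u)\ls 2^{-j\beta}\Phi_0^{-1}(u)$ for some $\beta>0$, while \eqref{int vp} forces $\vp$ to decay at least polynomially, $\vp(2^{j}r)\ls 2^{-j\delta}\vp(r)$ for some $\delta>0$; combining with $\Phi_0^{-1}\Theta^{-1}\ls\Psi^{-1}$ closes the estimate. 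This same interplay of $\Phi_0\in\bdtwo$ with the polynomial decay of $\vp$ coming from \eqref{int vp} is what keeps all three estimates uniform in $B$. Taking the supremum over $B$ then yields the asserted inequality, and the well-definedness of $[b,\Ir]f$ on $\LPp(\R^n)$ is the content of Remark~\ref{rem:well def}.
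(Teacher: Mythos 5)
Your proposal takes a genuinely different route from the paper's, and a considerably longer one. The paper's actual proof is very short and structural: since $\vp\in\cGdec$ satisfies \eqref{int vp}, Theorem~\ref{thm:C sim M} says that $b_0:=b-\dlim_{r\to\infty}b_{B(0,r)}$ lies in the Orlicz--Morrey space $L^{(\Phi_0,\vp)}(\R^n)$ with $\|b_0\|_{L^{(\Phi_0,\vp)}}\sim\|b\|_{\cL^{(\Phi_0,\vp)}}$. Then $[b,\Ir]f=b_0\Ir f-\Ir(b_0f)$ is handled \emph{globally} by two applications of the generalized H\"older inequality on Orlicz--Morrey spaces (Lemma~\ref{lem:gHolder OM}) and the boundedness of $\Ir$: from $\LPp$ to $\LTp$ via \eqref{comm Ir A}, and from $L^{(\Psi_0,\vp)}$ to $\LPsp$, the latter verified by a one-line computation using $\Psi_0^{-1}\sim\Phi^{-1}\Phi_0^{-1}$, the almost decreasingness of $r\mapsto\Phi_0^{-1}(\vp(r))$, and $\Phi_0^{-1}\Theta^{-1}\ls\Psi^{-1}$. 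No ball-by-ball decomposition, no dyadic annuli, no tail estimate.

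Your local decomposition into $I_1,I_2,I_3$ is the strategy the paper deploys for the harder case $\psi\in\cGinc$ (Theorem~\ref{thm:comm Ir}), where no analogue of Theorem~\ref{thm:C sim M} is available because $b$ does not lie (after subtracting a constant) in any Orlicz--Morrey space. Applying that machinery here is overkill, and as written it has a soft spot at $I_2$: the ``local part of the boundedness of $\Ir$ from $L^{(\Psi_0,\vp)}$ to $\LPsp$'' is not a theorem in the paper. To invoke the global boundedness of $\Ir$, you would need the \emph{uniform} bound $\|(b-b_{2B})f\chi_{2B}\|_{L^{(\Psi_0,\vp)}}\ls\|b\|_{\cL^{(\Phi_0,\vp)}}\|f\|_{\LPp}$, i.e.\ an estimate of $\|(b-b_{2B})f\chi_{2B}\|_{\Psi_0,\vp,B'}$ uniformly over \emph{all} balls $B'$, not merely over $B'=2B$. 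Proving that requires Lemma~\ref{lem:fBr-fBs} together with Lemma~\ref{lem:int Phi vp} to control $|b_{B'}-b_{2B}|$ for small $B'\subset 2B$, which is exactly the content of Theorem~\ref{thm:C sim M} in disguise. So your argument can be completed, but it effectively re-derives the structural fact the paper isolates once and for all; the paper's choice to prove that fact as a separate theorem and then use the global identity is what buys it the three-line proof. Your reduction of the $I_3$ sum via the geometric decay forced by $\Phi_0\in\bdtwo$ and \eqref{int vp} is essentially Lemma~\ref{lem:int Phi vp} and is fine.

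One smaller remark: your list of tools correctly identifies all the ingredients (including the need to transfer \eqref{comm Ir A} to $\Psi_0$ by pulling out $\Phi_0^{-1}(\vp(r))$, which is the same computation as in the paper), so the verification of the two boundedness statements for $\Ir$ matches the paper's. It is only the decision to localize that sends the proof down the longer road.
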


At the end of this section 
we note that, to prove the theorems,
we may assume that $\Phi,\Psi\in\iPy$ instead of  $\Phi,\Psi\in\biPy$.
For example, if $\Phi$ and $\Psi$ satisfy \eqref{vp psi T}
and $\Phi\approx\Phi_1$, $\Psi\approx\Psi_1$, then 
$\Phi_1$ and $\Psi_1$ also satisfy \eqref{vp psi T} by the relation \eqref{approx equiv}.
Moreover,
$\LPp(\R^n)=L^{(\Phi_1,\vp)}(\R^n)$ and
$\LPsp(\R^n)=L^{(\Psi_1,\vp)}(\R^n)$ with equivalent quasi-norms.

%%%===================================================================
%%%===================================================================
\section{Properties on Young functions and Orlicz-Morrey spaces}\label{sec:prop}
%%%===================================================================
%%%===================================================================

Let $a(\Phi)$ and $b(\Phi)$ be as in \eqref{aP bP}.
By the convexity, 
any Young function $\Phi$ is continuous on $[0,b(\Phi))$ and strictly increasing on $[a(\Phi),b(\Phi)]$.
Hence $\Phi$ is bijective from $[a(\Phi),b(\Phi)]$ to $[0,\Phi(b(\Phi))]$.
Moreover, $\Phi$ is absolutely continuous on any closed subinterval in $[0,b(\Phi))$.
That is,
its derivative $\Phi'$ exists a.e. and
\begin{equation}\label{derivative}
 \Phi(t)=\int_0^t\Phi'(s)\,ds, \quad t\in[0,b(\Phi)).
\end{equation}
In this case, 
if $\Phi \in \dtwo$, then $\Phi'$ satisfies the doubling condition, 
see \cite[Lemma~4.4]{Shi-Arai-Nakai2019Taiwan} for example.

%%%===========================
\begin{rem}\label{rem:D2 n2}
%%%===========================
\begin{enumerate}
\item 
$\bntwo\subset\biPy$ (\cite[Lemma~1.2.3]{Kokilashvili-Krbec1991}).

\item
Let $\Phi\in\biPy$.
Then
$\Phi\in\bdtwo$ if and only if $\Phi\approx\Psi$ for some $\Psi\in\dtwo$,
and,  
$\Phi\in\bntwo$ if and only if $\Phi\approx\Psi$ for some $\Psi\in\ntwo$.

\item
Let $\Phi\in\iPy$.
Then
$\Phi^{-1}$ satisfies the doubling condition by its concavity,
that is, 
\begin{equation}\label{Phi-1 doubl}
 \Phi^{-1}(u)\le\Phi^{-1}(2u)\le2\Phi^{-1}(u)
 \quad\text{for all $u\in[0,\infty]$}.
\end{equation}

\item
Let $\Phi\in\iPy$.
Then $\Phi\in\dtwo$ if and only if 
$t\mapsto\dfrac{\Phi(t)}{t^p}$ is almost decreasing for some $p\in[1,\infty)$.
\end{enumerate}
\end{rem}

Note that, for $\Phi\in\iPy$ and 
for a measurable subset $G\subset\Omega$ with $\mu(G)>0$,
it is known that
\begin{equation}\label{chi Orlicz norm}
 \|\chi_G\|_{\LP(\Omega)}
 =
 \frac1{\Phi^{-1}(1/\mu(G))}.
\end{equation}

%%%--------------------------------------------------
%%%===========================
\begin{defn}\label{defn:g-inverse}
%%%===========================
For a Young function $\Phi$, 
its complementary function is defined by
\begin{equation*}
\cPhi(t)= 
\begin{cases}
   \sup\{tu-\Phi(u):u\in[0,\infty)\}, & t\in[0,\infty), \\
   \infty, & t=\infty.
 \end{cases}
\end{equation*}
\end{defn}

Then $\cPhi$ is also a Young function,
and $(\Phi,\cPhi)$ is called a complementary pair.
For example, 
if $\Phi(t)=t^p/p$, then $\cPhi(t)=t^{p'}/p'$
for $p,p'\in(1,\infty)$ and $1/p+1/p'=1$.
If $\Phi(t)=t$, then
\begin{equation*}
 \cPhi(t)=
\begin{cases}
 0, & t\in[0,1], \\
 \infty, & t\in(1,\infty].
\end{cases}
\end{equation*}

Let $(\Phi,\cPhi)$ be a complementary pair of functions in $\iPy$. 
Then the following inequality holds:
\begin{equation}\label{Phi cPhi r}
 t\le\Phi^{-1}(t) \cPhi^{-1}(t)\le2t
 \quad\text{for}\quad t\in[0,\infty].
\end{equation}
From \eqref{chi Orlicz norm} and \eqref{Phi cPhi r} it follows that
\begin{equation}\label{chi cPhi norm}
 \|\chi_G\|_{\LcP(\Omega)}
 \le
 \mu(G)\Phi^{-1}(1/\mu(G)).
\end{equation}
For the Orlicz spaces we have the following generalized H\"older's inequality;
\begin{equation}\label{g Holder}
 \int_{\Omega} |f(x)g(x)| \,d\mu(x) \le 2\|f\|_{\LP(\Omega)} \|g\|_{\LcP(\Omega)}
 \quad\text{for}\quad f\in\LP(\Omega), \ g\in\LcP(\Omega).
\end{equation}

%%%-------------------------------------------------------------------
Let $\Phi\in\iPy$, $\vp:(0,\infty)\to(0,\infty)$ and $B=B(a,r)\subset\R^n$,
and let $\mu_B=dx/(|B|\vp(r))$.
Then 
by the relation \eqref{LP B norm} and \eqref{chi Orlicz norm}
we have
\begin{equation}\label{chi norm B}
 \|\chi_B\|_{\Phi,\vp,B}
 =
 \|\chi_B\|_{\LP(B,\mu_B)}
 =
 \frac1{\Phi^{-1}(1/\mu_B(B))}
 =
 \frac1{\Phi^{-1}(\vp(r))}.
\end{equation}
Moreover, by the relation \eqref{LP B norm} and \eqref{g Holder} we have
\begin{equation}\label{g Holder B}
 \frac1{|B|\vp(r)}\int_{B} |f(x)g(x)| \,dx \le 2\|f\|_{\Phi,\vp,B} \|g\|_{\cPhi,\vp,B}.
\end{equation}

%=============================
\begin{lem}\label{lem:chi norm 2}
%=============================
Let $\Phi\in\iPy$ and $\vp \in \cGdec$.
Then
there exists a constant $C\ge1$ such that,
for any ball $B=B(a,r)$, 
\begin{equation}\label{chi norm}
 \frac1{\Phi^{-1}(\vp(r))}
 \le
 \|\chi_B\|_{\LPp}
 \le
 \frac{C}{\Phi^{-1}(\vp(r))}.
\end{equation}
\end{lem}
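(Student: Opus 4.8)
The plan is to reduce the claim to the pointwise formula \eqref{chi norm B}, namely $\|\chi_B\|_{\Phi,\vp,B'} $ when $B'=B$, together with a comparison of the Orlicz–Morrey norm over all balls $B'$ against the value on $B'=B$ itself. By definition \eqref{OM norm}, $\|\chi_B\|_{\LPp}=\sup_{B'}\|\chi_B\|_{\Phi,\vp,B'}$, so the lower bound in \eqref{chi norm} is immediate: taking $B'=B$ gives $\|\chi_B\|_{\Phi,\vp,B}=1/\Phi^{-1}(\vp(r))$ by \eqref{chi norm B}, hence $\|\chi_B\|_{\LPp}\ge 1/\Phi^{-1}(\vp(r))$. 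The real content is the upper bound, where I must show that no ball $B'$ can make $\|\chi_B\|_{\Phi,\vp,B'}$ larger than a fixed multiple of $1/\Phi^{-1}(\vp(r))$.

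For the upper bound, fix a ball $B'=B(a',s)$ and estimate $\|\chi_B\|_{\Phi,\vp,B'}$. Using \eqref{PpB norm} and the normalization property of Young functions, one has
\begin{equation*}
 \|\chi_B\|_{\Phi,\vp,B'}
 =\frac{1}{\Phi^{-1}\!\left(\vp(s)\,|B'|/|B\cap B'|\right)}
 \le \frac{1}{\Phi^{-1}\!\left(\vp(s)\,|B'|/|B\cap B'|\right)},
\end{equation*}
where I interpret the right-hand side as $0$ when $B\cap B'$ has measure zero. Thus I need a lower bound $\Phi^{-1}\bigl(\vp(s)|B'|/|B\cap B'|\bigr)\gtrsim \Phi^{-1}(\vp(r))$, uniformly in $B'$. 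Since $\Phi^{-1}$ is increasing, by \eqref{Phi-1 doubl} (the doubling of $\Phi^{-1}$) it suffices to compare the arguments: I must show $\vp(s)\,|B'|/|B\cap B'|\gtrsim \vp(r)$ whenever $B\cap B'\neq\emptyset$ in measure. There are two regimes. If $s\le r$, then $\vp(s)\ge \vp(r)/C$ because $\vp$ is almost decreasing, and $|B'|/|B\cap B'|\ge 1$, so the claim holds. If $s> r$, then I use that $r\mapsto\vp(r)r^n$ is almost increasing: $\vp(s)s^n\ge \vp(r)r^n/C$, i.e. $\vp(s)\ge C^{-1}\vp(r)(r/s)^n$. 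Combining with $|B'|/|B\cap B'|\ge |B'|/|B| = (s/r)^n$ (valid since $B\cap B'\subset B$), I get $\vp(s)|B'|/|B\cap B'|\ge C^{-1}\vp(r)(r/s)^n(s/r)^n=C^{-1}\vp(r)$, as required.

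Applying $\Phi^{-1}$ and its doubling property \eqref{Phi-1 doubl} then yields $\Phi^{-1}\bigl(\vp(s)|B'|/|B\cap B'|\bigr)\ge c\,\Phi^{-1}(\vp(r))$ for a constant $c>0$ depending only on $n$, $\Phi$ and the almost-monotonicity constants of $\vp$; taking reciprocals and then the supremum over $B'$ gives the upper bound in \eqref{chi norm}. The main obstacle, and the only place one must be slightly careful, is handling the case where $B\cap B'$ is a null set or very small compared to $B'$: there $\Phi^{-1}$ of a huge (possibly infinite) argument is large, so $\|\chi_B\|_{\Phi,\vp,B'}$ is small and the bound is trivial, but the argument must be phrased so that division by $|B\cap B'|$ and the conventions $\Phi^{-1}(\infty)=\infty$, $1/\infty=0$ are used consistently. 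Everything else is a direct consequence of $\vp\in\cGdec$ and the geometric inequality $|B\cap B'|\le\min(|B|,|B'|)$.
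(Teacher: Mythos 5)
Your proof is correct and follows essentially the same strategy as the paper's: both use $\|\chi_B\|_{\Phi,\vp,B}=1/\Phi^{-1}(\vp(r))$ for the lower bound, and for the upper bound both split into the case of a small versus a large competing ball $B'$, invoking the almost decreasingness of $\vp$ and the almost increasingness of $\vp(r)r^n$ respectively. The only cosmetic difference is that you compute $\|\chi_B\|_{\Phi,\vp,B'}$ exactly as $1/\Phi^{-1}\bigl(\vp(s)|B'|/|B\cap B'|\bigr)$ and then use the doubling of $\Phi^{-1}$, whereas the paper verifies the modular inequality directly and absorbs constants via the convexity of $\Phi$; both routes are equivalent in substance.
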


\begin{proof}
Fix a ball $B=B(a,r)$.
By \eqref{chi norm B} we have
\begin{equation*}
 \frac1{\Phi^{-1}(\vp(r))}
 =
 \|\chi_B\|_{\Phi,\vp,B}
 \le
 \|\chi_B\|_{\LPp}.
\end{equation*}
To show the second inequality in \eqref{chi norm}, let $\lambda=1/{\Phi^{-1}(\vp(r))}$.
Then it is enough to show that, 
for some $C\ge1$ and for all balls $B'=B(b,r')$ with $B\cap B'\ne\emptyset$,
\begin{equation}\label{B'}
 \frac1{\vp(r')}\fint_{B'} \Phi\left(\frac {\chi_B(x)}{C\lambda}\right)dx
 \le1.
\end{equation}
If $B'\subset 3B$,
then $\vp(r') \gs \vp(3r) \sim \vp(r)$. 
Hence
\begin{equation*}
 \frac1{\vp(r')|B'|}\int_{B'} \Phi\left(\frac{\chi_B(x)}{\lambda}\right)dx
 \le
 \frac1{\vp(r')}\Phi\left(\frac{1}{\lambda}\right)
 \ls
 \frac1{\vp(r)}\Phi\left(\frac{1}{\lambda}\right)
 \le 1.
\end{equation*}
In the above we used \eqref{inverse ineq} for the last inequality.
If $B' \cap (3B)^{\complement} \neq \emptyset$ and $B' \cap B \neq \emptyset$,
then $3B' \supset B$.
Hence
$\vp(r')|B'| \sim \vp(3r')|3B'|\gs\vp(r)|B|$
and
\begin{equation*}
 \frac1{\vp(r')|B'|}\int_{B'} \Phi\left(\frac{\chi_B(x)}{\lambda}\right)dx
 \ls
 \frac1{\vp(r)|B|}\int_{B} \Phi\left(\frac1{\lambda}\right)dx
 \le 1.
\end{equation*}
Then, by the convexity of $\Phi$ we have \eqref{B'}.
\end{proof}

%=====================================================================
\begin{lem}[{\cite[Lemma~4.4]{Shi-Arai-Nakai2019Taiwan}}]\label{lem:eta}
%=====================================================================
If $\Phi\in\bntwo$, 
then $\Phi((\cdot)^{\theta})\in\bntwo$ for some $\theta\in(0,1)$.
\end{lem}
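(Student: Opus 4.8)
\textbf{Proof proposal for Lemma~\ref{lem:eta}.}
The plan is to extract the $\nabla_2$-condition into a quantitative submultiplicativity statement and then choose $\theta$ so that the power $(\cdot)^\theta$ absorbs the constant. First I would recall that, by Remark~\ref{rem:D2 n2}(i), $\Phi\in\bntwo$ implies $\Phi\in\biPy$, so after replacing $\Phi$ by an equivalent function in $\iPy$ (which changes neither the hypothesis nor the conclusion, since $\Phi\approx\Psi$ gives $\Phi((\cdot)^\theta)\approx\Psi((\cdot)^\theta)$) I may assume $\Phi$ is a genuine Young function. The $\nabla_2$-condition says there is $k>1$ with $\Phi(t)\le\frac1{2k}\Phi(kt)$ for all $t>0$; iterating this $m$ times gives $\Phi(t)\le (2k)^{-m}\Phi(k^m t)$, equivalently $\Phi(k^m t)\ge (2k)^m\Phi(t)$. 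I want to convert this ``growth on the dyadic-$k$ scale'' into a growth estimate valid for all scaling factors $s\ge 1$: writing $s=k^m u$ with $m=\lfloor\log_k s\rfloor$ and $u\in[1,k)$, and using that $\Phi$ is increasing, one gets $\Phi(st)\ge\Phi(k^m t)\ge (2k)^m\Phi(t)\ge C\,s^{q}\Phi(t)$ for suitable $q>1$ and $C>0$ depending only on $k$; indeed $q=\log_k(2k)=1+\log_k 2>1$ works up to the harmless constant coming from the fractional part.

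Having the inequality $\Phi(st)\ge C s^{q}\Phi(t)$ for all $s\ge1$, $t>0$ with $q>1$, I would now verify that $\Theta(t):=\Phi(t^{\theta})$ lies in $\bntwo$ for $\theta:=1/q\in(0,1)$. Fix $t>0$ and choose the dilation factor in the previous estimate to be $s=2^{1/\theta}=2^{q}\ge1$: then
\begin{equation*}
 \Theta(2t)=\Phi((2t)^{\theta})=\Phi\!\left(2^{\theta}\,t^{\theta}\right),
\end{equation*}
wait — I need to push the factor the other way. More cleanly: apply $\Phi(st)\ge Cs^{q}\Phi(t)$ with $t$ replaced by $t^{\theta}$ and $s$ chosen so that $s\,t^{\theta}=(2t)^{\theta}$, i.e. $s=2^{\theta}$; this only gives $s<2$, not enough. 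So instead I want the \emph{opposite} monotonicity: I should use that, since $q>1$, the function $u\mapsto\Phi(u^{1/q})$ grows \emph{slower} than linearly in a scaling sense, which is exactly a $\Delta_2$-type statement for the inverse. The correct route is via inverses: $\Phi\in\bntwo$ is equivalent (see \cite[Lemma~1.2.3, 2.2.x]{Kokilashvili-Krbec1991} or a direct computation) to $\Phi^{-1}(2u)\le 2^{\theta_0}\Phi^{-1}(u)$ for some $\theta_0<1$, i.e. $\Phi^{-1}$ is ``$\theta_0$-subadditive under doubling.'' Then $(\Phi((\cdot)^{\theta}))^{-1}(u)=\Phi^{-1}(u)^{1/\theta}$, and
\begin{equation*}
 \bigl(\Phi((\cdot)^{\theta})\bigr)^{-1}(2u)
 =\Phi^{-1}(2u)^{1/\theta}
 \le \bigl(2^{\theta_0}\Phi^{-1}(u)\bigr)^{1/\theta}
 =2^{\theta_0/\theta}\,\bigl(\Phi((\cdot)^{\theta})\bigr)^{-1}(u),
\end{equation*}
so choosing $\theta\in(\theta_0,1)$ makes $2^{\theta_0/\theta}<2$, which by \eqref{Phi-1 doubl}-type reasoning (or directly by the inverse characterisation of $\bntwo$) forces $\Phi((\cdot)^{\theta})\in\bntwo$.

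The genuinely delicate point is establishing the equivalence between $\Phi\in\bntwo$ and the power bound $\Phi^{-1}(2u)\le 2^{\theta_0}\Phi^{-1}(u)$ with $\theta_0<1$ \emph{strict}; the strict inequality $\theta_0<1$ is precisely what distinguishes $\nabla_2$ from the trivial doubling \eqref{Phi-1 doubl} enjoyed by \emph{every} $\Phi\in\iPy$. I would derive it from the iterated form $\Phi(k^m t)\ge (2k)^m\Phi(t)$ by applying $\Phi^{-1}$: this gives $\Phi^{-1}((2k)^m v)\ge k^m\Phi^{-1}(v)$, hence $\Phi^{-1}$ at least multiplies by $k$ whenever its argument multiplies by $2k$; reading this as a bound on $\log\Phi^{-1}$ against $\log(\text{argument})$ yields $\Phi^{-1}(Nv)\le C N^{\log_{2k}k}\Phi^{-1}(v)$ for all $N\ge1$, and $\log_{2k}k=1/(1+\log_k 2)<1$ is the desired $\theta_0$. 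Once $\theta_0<1$ is in hand the rest is the short inverse computation above, and I would cite Lemma~\ref{lem:chi norm 2}'s ambient conventions and \eqref{approx equiv} for the reduction to $\iPy$. The only real obstacle is bookkeeping the constants in the $\log$-scale estimate carefully enough to see the exponent is $<1$; everything else is routine.
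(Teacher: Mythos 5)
The paper does not prove this lemma; it cites \cite[Lemma~4.4]{Shi-Arai-Nakai2019Taiwan}, so there is no in-paper argument to compare against. Your inverse-based route is a natural way to establish the claim, and the core idea is correct: from $\Phi\in\bntwo$ one extracts a power bound $\Phi^{-1}(Nv)\le CN^{\theta_0}\Phi^{-1}(v)$ for all $N\ge1$ with $\theta_0=\log_{2k}k<1$, and since $\bigl(\Phi((\cdot)^{\theta})\bigr)^{-1}=(\Phi^{-1})^{1/\theta}$, choosing $\theta\in(\theta_0,1)$ keeps the effective exponent $\theta_0/\theta$ strictly below $1$, which is what $\bntwo$ amounts to on the inverse side. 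Three repairs are needed, though. (1) The intermediate inequality $\Phi^{-1}((2k)^m v)\ge k^m\Phi^{-1}(v)$ has the wrong sign: applying the increasing $\Phi^{-1}$ to $\Phi(k^m t)\ge(2k)^m\Phi(t)$ with $t=\Phi^{-1}(v)$ gives $\Phi^{-1}((2k)^m v)\le k^m\Phi^{-1}(v)$, and ``at least multiplies by $k$'' should read ``at most multiplies by $k$''; the power bound you then quote is nevertheless the correct $\le$. (2) Your stated characterization $\Phi^{-1}(2u)\le 2^{\theta_0}\Phi^{-1}(u)$ drops the multiplicative constant that interpolating between $(2k)$-dyadic scales produces (one picks up a factor $\sim k$), so the displayed computation should carry a constant $C^{1/\theta}$. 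This is harmless, but you should make the closing step explicit: from $\bigl(\Phi((\cdot)^{\theta})\bigr)^{-1}(Nu)\le C^{1/\theta}N^{\theta_0/\theta}\bigl(\Phi((\cdot)^{\theta})\bigr)^{-1}(u)$ with $\theta_0/\theta<1$, choose $N=2k'$ with $k'$ so large that $C^{1/\theta}(2k')^{\theta_0/\theta}\le k'$; this gives exactly the inverse form of $\bntwo$ for $\Phi((\cdot)^{\theta})$. Appealing to ``\eqref{Phi-1 doubl}-type reasoning'' is not right, since \eqref{Phi-1 doubl} is the universal concavity doubling enjoyed by every $\Phi\in\iPy$ and does not single out $\bntwo$. (3) The abandoned first half (taking $\theta=1/q=\log_{2k}k$ exactly) should simply be deleted; as you observe, the boundary exponent fails and one needs the strict inequality $\theta>\log_{2k}k$, which the inverse argument supplies.
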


%=====================================================================
\begin{lem}\label{lem:p}
%=====================================================================
Let $\Phi\in\iPy$, $\vp:(0,\infty)\to(0,\infty)$ and $B=B(a,r)\subset\R^n$.
Then
\begin{equation}\label{fint_B f}
 \fint_B |f(x)|\,dx
 \le
 2\Phi^{-1}(\vp(r))\|f\|_{\Phi,\vp,B}.
\end{equation}
Moreover, if $\Phi\in\ntwo$, 
then there exists $p\in(1,\infty)$ such that
\begin{equation*}
 \left(\fint_{B} |f(y)|^p dy\right)^{1/p}
 \le
 C\Phi^{-1}(\vp(r)) \|f\|_{\Phi,\vp,B},
\end{equation*}
where the constant $C$ is independent of $f$ and $B=B(a,r)$.
\end{lem}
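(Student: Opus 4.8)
The plan is to prove the two assertions of Lemma~\ref{lem:p} separately, treating the first inequality by the generalized H\"older inequality and the second by an interpolation-type argument via Lemma~\ref{lem:eta}.

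\textbf{First inequality.} The inequality \eqref{fint_B f} is essentially the statement that $\|\chi_B\|_{\cPhi,\vp,B}$ is controlled by $\Phi^{-1}(\vp(r))$, combined with generalized H\"older's inequality \eqref{g Holder B}. Concretely, I would apply \eqref{g Holder B} with $g=\chi_B$ to get
\[
 \fint_B |f(x)|\,dx
 =
 \frac{1}{|B|\vp(r)}\int_B |f(x)\chi_B(x)|\,dx\cdot\vp(r)
 \le
 2\vp(r)\,\|f\|_{\Phi,\vp,B}\,\|\chi_B\|_{\cPhi,\vp,B}.
\]
Then I would estimate $\|\chi_B\|_{\cPhi,\vp,B}=\|\chi_B\|_{\LcP(B,\mu_B)}$ using \eqref{chi cPhi norm} with $\mu=\mu_B$ and $G=B$, so that $\mu_B(B)=1/\vp(r)$, giving $\|\chi_B\|_{\cPhi,\vp,B}\le (1/\vp(r))\Phi^{-1}(\vp(r))$. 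Multiplying through, the factors of $\vp(r)$ cancel and the bound $2\Phi^{-1}(\vp(r))\|f\|_{\Phi,\vp,B}$ drops out. (Alternatively, one can avoid invoking the complementary function: split $B$ according to $|f(x)|\le \lambda\Phi^{-1}(\vp(r))$ or not, where $\lambda=\|f\|_{\Phi,\vp,B}$, and use $\Phi(|f|/\lambda)\ge \vp(r)$ on the bad set together with the defining inequality $\fint_B\Phi(|f|/\lambda)\le\vp(r)$; Chebyshev then bounds the measure of the bad set by $|B|$ and Jensen handles the good set.)

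\textbf{Second inequality.} For the $L^p$-improvement, the idea is to exploit $\Phi\in\ntwo$ through Lemma~\ref{lem:eta}: there is $\theta\in(0,1)$ with $\Psi_\theta:=\Phi((\cdot)^\theta)\in\bntwo\subset\biPy$. Set $p=1/\theta>1$. For a ball $B=B(a,r)$ and $f$ with $\lambda:=\|f\|_{\Phi,\vp,B}>0$, the defining condition reads $\fint_B\Phi(|f(x)|/\lambda)\,dx\le\vp(r)$, which after substituting $|f|^p$ for the argument (i.e. writing $\Phi(|f|/\lambda)=\Psi_\theta\big((|f|/\lambda)^p\big)$... more carefully $\Phi(t)=\Psi_\theta(t^{1/\theta})=\Psi_\theta(t^p)$ so $\Phi(|f|/\lambda)=\Psi_\theta((|f|/\lambda)^p)$) says precisely
\[
 \fint_B \Psi_\theta\!\left(\frac{|f(x)|^p}{\lambda^p}\right)dx \le \vp(r),
 \quad\text{i.e.}\quad \big\||f|^p\big\|_{\Psi_\theta,\vp,B}\le\lambda^p.
\]
Wait --- this only gives control in the $\Psi_\theta$-norm, not directly an $L^p$ average. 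So instead I would apply the \emph{first} inequality \eqref{fint_B f} of this very lemma, with $\Phi$ replaced by $\Psi_\theta$ (which lies in $\biPy$, and $\approx$ some element of $\iPy$, so \eqref{fint_B f} applies after the standard reduction to $\iPy$ noted at the end of Section~\ref{sec:main}):
\[
 \fint_B |f(x)|^p\,dx
 \le
 2\,\Psi_\theta^{-1}(\vp(r))\,\big\||f|^p\big\|_{\Psi_\theta,\vp,B}
 \le
 2\,\Psi_\theta^{-1}(\vp(r))\,\lambda^p.
\]
Finally I would relate $\Psi_\theta^{-1}$ to $\Phi^{-1}$: since $\Psi_\theta(t)=\Phi(t^\theta)$ one checks $\Psi_\theta^{-1}(u)=(\Phi^{-1}(u))^{1/\theta}=(\Phi^{-1}(u))^p$ (using \eqref{inverse}, or up to the equivalence $\sim$ coming from $\approx$ and \eqref{approx equiv} if one works with the $\iPy$-representative). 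Hence
\[
 \left(\fint_B |f(x)|^p\,dx\right)^{1/p}
 \le
 (2)^{1/p}\,\big(\Psi_\theta^{-1}(\vp(r))\big)^{1/p}\,\lambda
 \sim
 \Phi^{-1}(\vp(r))\,\|f\|_{\Phi,\vp,B},
\]
with the constant depending only on $\Phi$ (through $\theta$ and the $\approx$-constant), uniformly in $f$ and $B$.

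\textbf{Main obstacle.} The routine parts are the H\"older estimate and the substitution bookkeeping; the one genuine subtlety is handling the generalized inverse cleanly. Since $\Psi_\theta=\Phi((\cdot)^\theta)$ need not itself be convex, one must pass to an equivalent Young function $\Psi_\theta^*\in\iPy$ with $\Psi_\theta\approx\Psi_\theta^*$ (possible because $\Psi_\theta\in\bntwo\subset\biPy$), verify that the Orlicz-Morrey norms $\|\cdot\|_{\Psi_\theta,\vp,B}$ and $\|\cdot\|_{\Psi_\theta^*,\vp,B}$ are comparable with constants independent of $B$, and then track how $\approx$ translates into $\sim$ for the inverses via \eqref{approx equiv}, so that the final constant $C$ remains independent of $f$ and $B$. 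I would also double-check that $\theta$ from Lemma~\ref{lem:eta} gives $p=1/\theta>1$ (it does, since $\theta\in(0,1)$), and that the identity $\Psi_\theta^{-1}(u)=(\Phi^{-1}(u))^{1/\theta}$ holds exactly for the generalized inverse \eqref{inverse}, not merely up to equivalence, in the case $\Phi$ is genuinely bijective --- in the general $\biPy$ case one simply accepts the $\sim$.
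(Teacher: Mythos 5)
Your proof is correct and follows essentially the same route as the paper: the first inequality via generalized H\"older's inequality \eqref{g Holder B} together with the evaluation of $\|\chi_B\|_{\cPhi,\vp,B}$ (the paper uses \eqref{chi norm B} plus \eqref{Phi cPhi r}, you use the packaged \eqref{chi cPhi norm}, which is the same thing), and the second via Lemma~\ref{lem:eta}, passing to $\Phi_\theta\approx\Phi((\cdot)^\theta)$, noting $\Phi_\theta^{-1}\sim(\Phi^{-1})^{1/\theta}$ and $\||f|^p\|_{\Phi_\theta,\vp,B}\sim\|f\|_{\Phi,\vp,B}^p$, and then reapplying the first inequality. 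Your ``main obstacle'' paragraph correctly identifies the only bookkeeping subtlety (replacing the non-convex $\Phi((\cdot)^\theta)$ by an equivalent $\iPy$-representative), which the paper handles in exactly the same way.
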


\begin{proof}
By \eqref{g Holder B}, \eqref{chi norm B} and \eqref{Phi cPhi r}
we have
\begin{multline*}
 \fint_B |f(x)|\,dx
 \le
 2\vp(r)\|f\|_{\Phi,\vp,B}\|\chi_B\|_{\cPhi,\vp,B} 
 =
 \frac{2\vp(r)}{\cPhi^{-1}(\vp(r))}\|f\|_{\Phi,\vp,B} \\
 \le
 2\Phi^{-1}(\vp(r))\|f\|_{\Phi,\vp,B}.
\end{multline*}
Next we assume that $\Phi\in\ntwo$.
Then by Lemma~\ref{lem:eta} we can take $\theta\in(0,1)$ such that $\Phi((\cdot)^{\theta})\in\bntwo$.
Let $\Phi_{\theta}\in\ntwo$ such that
$\Phi_{\theta}\approx\Phi\left((\cdot)^{\theta}\right)$.
Then ${\Phi_{\theta}}^{-1} \sim (\Phi^{-1})^{1/\theta}$. 
Let $p=1/\theta$.
Then $\||f|^p\|_{\Phi_{\theta},\vp,B}\sim(\|f\|_{\Phi,\vp,B})^p$.
Using $\eqref{fint_B f}$, we have 
\begin{equation*}
 \left(\fint_{B} |f(y)|^p dy\right)^{1/p}
 \le 
 \big(2{\Phi_{\theta}}^{-1}(\vp(r)) \||f|^p\|_{\Phi_{\theta},\vp,B}\big)^{1/p}
 \sim
 \Phi^{-1}(\vp(r)) \|f\|_{\Phi,\vp,B}.
%qedhere
\end{equation*}
\end{proof}

%=============================
\begin{lem}\label{lem:int Phi vp}
%=============================
Let $\Phi\in\dtwo$ and $\vp\in\cGdec$.
If $\vp$ satisfies \eqref{int vp},
then
there exists a positive constant $C$ such that,
for all $r\in(0,\infty)$,
\begin{equation}\label{int P vp}
 \int_r^{\infty}\frac{\Phi^{-1}(\vp(t))}{t}\,dt\le C\Phi^{-1}(\vp(r)).
\end{equation}
\end{lem}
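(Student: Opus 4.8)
The plan is to reduce the integral inequality for $\Phi^{-1}\circ\vp$ to the hypothesis \eqref{int vp}, which is exactly the same kind of inequality but for $\vp$ itself. The bridge between the two is the fact that $\Phi\in\dtwo$ forces $\Phi^{-1}$ to behave, up to constants, like a power $t^{1/q}$ for some finite $q\ge 1$: more precisely, by Remark~\ref{rem:D2 n2}(iv), $\Phi\in\dtwo$ is equivalent to $t\mapsto\Phi(t)/t^q$ being almost decreasing for some $q\in[1,\infty)$, and a short dualization of this shows that $u\mapsto\Phi^{-1}(u)/u^{1/q}$ is almost \emph{increasing}, i.e. there is a constant $C$ with $\Phi^{-1}(\lambda u)\le C\lambda^{1/q}\Phi^{-1}(u)$ for all $\lambda\ge 1$ and $u>0$. (Equivalently one can write $\Phi^{-1}(u)\le C\,(u/v)^{1/q}\,\Phi^{-1}(v)$ whenever $u\ge v$.) This is the only structural property of $\Phi$ I will need.

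First I would fix $r>0$ and split the half-line $[r,\infty)$ dyadically: $[r,\infty)=\bigcup_{k\ge0}[2^k r, 2^{k+1}r)$. On the $k$-th piece, since $\vp$ is almost decreasing we have $\vp(t)\le C\vp(2^k r)$, and since $\Phi^{-1}$ is increasing and doubling (Remark~\ref{rem:D2 n2}(iii)) we get $\Phi^{-1}(\vp(t))\le C\Phi^{-1}(\vp(2^k r))$ there; hence
\begin{equation*}
 \int_r^{\infty}\frac{\Phi^{-1}(\vp(t))}{t}\,dt
 \le C\sum_{k=0}^{\infty}\Phi^{-1}(\vp(2^k r)).
\end{equation*}
Next I would iterate \eqref{int vp}: applying it on $[2^k r,\infty)$ and using almost-monotonicity of $\vp$ once more on a dyadic block, one obtains the geometric decay $\vp(2^k r)\le C\,2^{-k}\,\vp(r)$ for some fixed $C$ (this is the standard consequence of \eqref{int vp} together with $\vp$ almost decreasing). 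Feeding this into the sum and using the power bound $\Phi^{-1}(2^{-k}\vp(r))\le C\,2^{-k/q}\,\Phi^{-1}(\vp(r))$ from the $\dtwo$ property gives
\begin{equation*}
 \sum_{k=0}^{\infty}\Phi^{-1}(\vp(2^k r))
 \le C\,\Phi^{-1}(\vp(r))\sum_{k=0}^{\infty}2^{-k/q}
 = C'\,\Phi^{-1}(\vp(r)),
\end{equation*}
since $q<\infty$ makes the geometric series converge. Combining the two displays yields \eqref{int P vp}.

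The main obstacle is the first bullet of the argument — extracting from $\Phi\in\dtwo$ the quantitative power estimate $\Phi^{-1}(\lambda u)\lesssim\lambda^{1/q}\Phi^{-1}(u)$ for $\lambda\ge1$, and in particular verifying that the exponent one gets is a \emph{finite} $q$ (if it degenerated to $q=\infty$, i.e. no power control at all, the geometric series would fail to converge). This is where Remark~\ref{rem:D2 n2}(iv) and the absolute continuity/derivative machinery of Section~\ref{sec:prop} are used; everything after that is routine dyadic summation. An alternative route that avoids isolating the exponent explicitly is to integrate by parts or to use the layer-cake/change-of-variables identity $\int_r^\infty \Phi^{-1}(\vp(t))\,dt/t = \int_0^\infty \Phi^{-1}(s)\,d\nu(s)$ for a suitable measure $\nu$ coming from $\vp$, but the dyadic approach above is cleaner and self-contained given the tools already assembled in the excerpt.
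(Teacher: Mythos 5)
Your dyadic approach is sound in outline and gives a correct proof once one small quantitative claim is repaired. The decay estimate you assert, $\vp(2^k r)\le C\,2^{-k}\vp(r)$ with exponent exactly $1$, is not a consequence of \eqref{int vp}: for $\vp(r)=r^{-\alpha}$ with small $\alpha\in(0,n)$, condition \eqref{int vp} holds (with constant $1/\alpha$), yet $\vp(2^k r)=2^{-\alpha k}\vp(r)$, so the decay rate can be arbitrarily close to $1$. The correct consequence is $\vp(2^k r)\le C\,2^{-\epsilon k}\vp(r)$ for \emph{some} $\epsilon>0$ depending on the constant in \eqref{int vp}; for instance, writing $g(r)=\int_r^{\infty}\vp(t)/t\,dt$, one checks $g\sim\vp$ (since $\vp\in\cGdec$ is doubling) and, after replacing $\vp$ by an equivalent continuous strictly decreasing function (Remark~\ref{rem:vp bijective}), $g'(r)=-\vp(r)/r\le -g(r)/(Cr)$, whence $g(s)\le g(r)(r/s)^{1/C}$ for $s\ge r$. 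Since $p=1/q>0$, any $\epsilon>0$ still makes $\sum_k 2^{-\epsilon p k}$ converge, so the proof survives after this correction.

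The paper's proof uses the same structural inputs but is packaged more compactly. It too invokes Remark~\ref{rem:D2 n2}(iv) to get that $u\mapsto\Phi^{-1}(u)/u^p$ is almost increasing for some $p\in(0,1]$. Instead of summing dyadically, it factors $\Phi^{-1}(\vp(t))/t=\bigl[\Phi^{-1}(\vp(t))/\vp(t)^p\bigr]\cdot\bigl[\vp(t)^p/t\bigr]$, bounds the first factor by $\Phi^{-1}(\vp(r))/\vp(r)^p$ uniformly in $t\ge r$ (because $\vp(t)\lesssim\vp(r)$ and that quotient is almost increasing), and then cites $\int_r^{\infty}\vp(t)^p/t\,dt\le C_p\,\vp(r)^p$ from~\cite[Lemma~7.1]{Nakai2008AMS}. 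That cited inequality is proved by exactly the polynomial decay of $\vp$ that your dyadic argument needs, so the two routes are really the same argument in different clothing: the paper treats the $\vp^p$-estimate as a ready-made lemma, while you unroll it explicitly.
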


\begin{proof}
By Remark~\ref{rem:D2 n2}~(iv) we see that
$t\to\dfrac{\Phi^{-1}(t)}{t^p}$ is almost increasing for some $p\in(0,1]$.
From \eqref{int vp} it follows that
\begin{equation*}
 \int_r^{\infty}\frac{\vp(t)^p}{t}\,dt\le C_p\,\vp(r)^p,
\end{equation*}
for some $C_p>0$, see \cite[Lemma~7.1]{Nakai2008AMS}.
Then
\begin{align*}
 \int_r^{\infty}\frac{\Phi^{-1}(\vp(t))}{t}\,dt
 &=
 \int_r^{\infty}\frac{\Phi^{-1}(\vp(t))}{\vp(t)^p}\frac{\vp(t)^p}{t}\,dt \\
 &\ls
 \frac{\Phi^{-1}(\vp(r))}{\vp(r)^p}\int_r^{\infty}\frac{\vp(t)^p}{t}\,dt 
 \le
 C_p\Phi^{-1}(\vp(r)).
\end{align*}
This shows the conclusion.
\end{proof}

%=============================
\begin{lem}[{\cite[Thorem~4.1]{Nakai2008Studia}}]\label{lem:gHolder OM}
%=============================
Let $\Phi_i\in\iPy$ and $\vp_i\in\cGdec$, $i=1,2,3$.
Assume that
\begin{equation*}
 \Phi_1^{-1}(t\vp_1(r))\Phi_3^{-1}(t\vp_3(r))\le C\Phi_2^{-1}(t\vp_2(r))
\end{equation*}
for all $r,t\in(0,\infty)$.
Then
\begin{equation*}
 \|fg\|_{L^{(\Phi_2,\vp_2)}}
 \le
 2C \|f\|_{L^{(\Phi_1,\vp_1)}} \|g\|_{L^{(\Phi_3,\vp_3)}}.
\end{equation*}
\end{lem}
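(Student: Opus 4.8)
The plan is to reduce the inequality to a single–ball estimate and then close it by a pointwise comparison of the three Young functions, using the defining property \eqref{inverse ineq} of the generalized inverse.

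Since $\|f\|_{\Phi_i,\vp_i,B}\le\|f\|_{L^{(\Phi_i,\vp_i)}}$ for every ball $B=B(a,r)$, it suffices to prove that
\begin{equation}\label{eq:gHball}
 \|fg\|_{\Phi_2,\vp_2,B}\le 2C\,\|f\|_{\Phi_1,\vp_1,B}\,\|g\|_{\Phi_3,\vp_3,B}
\end{equation}
for each fixed $B$, and then take the supremum over $B$. For fixed $B$, I would first discard the degenerate cases: if $\|f\|_{\Phi_1,\vp_1,B}=0$ or $\|g\|_{\Phi_3,\vp_3,B}=0$ then $fg=0$ a.e.\ on $B$ and there is nothing to prove, while if either quantity is $+\infty$ then \eqref{eq:gHball} is trivial. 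Otherwise, by homogeneity I may normalize so that $\|f\|_{\Phi_1,\vp_1,B}=\|g\|_{\Phi_3,\vp_3,B}=1$, i.e.
\begin{equation*}
 \fint_B\Phi_1(|f(x)|)\,dx\le\vp_1(r),\qquad
 \fint_B\Phi_3(|g(x)|)\,dx\le\vp_3(r),
\end{equation*}
so that \eqref{eq:gHball} becomes $\frac1{\vp_2(r)}\fint_B\Phi_2(|f(x)g(x)|/(2C))\,dx\le1$.

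The key step is the pointwise inequality: for all $u,v\in[0,\infty]$,
\begin{equation}\label{eq:gHpw}
 \frac1{\vp_2(r)}\,\Phi_2\!\left(\frac{uv}{C}\right)
 \le
 \frac1{\vp_1(r)}\,\Phi_1(u)+\frac1{\vp_3(r)}\,\Phi_3(v).
\end{equation}
To obtain \eqref{eq:gHpw} I would introduce the common parameter $t=\Phi_1(u)/\vp_1(r)+\Phi_3(v)/\vp_3(r)$ (the cases $t=0$ and $t=\infty$ being immediate), so that $\Phi_1(u)\le t\vp_1(r)$ and $\Phi_3(v)\le t\vp_3(r)$. By \eqref{inverse ineq} and the monotonicity of the generalized inverse,
\begin{equation*}
 u\le\Phi_1^{-1}(\Phi_1(u))\le\Phi_1^{-1}(t\vp_1(r)),\qquad
 v\le\Phi_3^{-1}(\Phi_3(v))\le\Phi_3^{-1}(t\vp_3(r)),
\end{equation*}
so the hypothesis of the lemma gives
\begin{equation*}
 \frac{uv}{C}\le\frac1C\,\Phi_1^{-1}(t\vp_1(r))\,\Phi_3^{-1}(t\vp_3(r))\le\Phi_2^{-1}(t\vp_2(r)),
\end{equation*}
and applying $\Phi_2$ to both sides and then \eqref{inverse ineq} once more yields $\Phi_2(uv/C)\le t\vp_2(r)$, which is \eqref{eq:gHpw}.

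Finally, applying \eqref{eq:gHpw} with $u=|f(x)|$, $v=|g(x)|$, using the convexity of $\Phi_2$ to pass from $uv/C$ to $uv/(2C)$ at the cost of a factor $\frac12$, integrating over $B$, and invoking the normalization, I would get
\begin{equation*}
 \frac1{\vp_2(r)}\fint_B\Phi_2\!\left(\frac{|f(x)g(x)|}{2C}\right)dx
 \le\frac12\left(\frac1{\vp_1(r)}\fint_B\Phi_1(|f|)\,dx+\frac1{\vp_3(r)}\fint_B\Phi_3(|g|)\,dx\right)\le1,
\end{equation*}
which gives \eqref{eq:gHball} and hence the lemma. The main obstacle is spotting the correct pointwise inequality \eqref{eq:gHpw}: the parameter linking the three inverse functions must be a simultaneous upper bound for $\Phi_1(u)/\vp_1(r)$ and $\Phi_3(v)/\vp_3(r)$, and the choice of their sum is what makes the subsequent integration automatic; everything else is routine. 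Note that $\vp_i\in\cGdec$ is not actually used in the argument beyond guaranteeing that the spaces $L^{(\Phi_i,\vp_i)}(\R^n)$ are well defined — only $\Phi_i\in\iPy$ (so that \eqref{inverse ineq} and the convexity of $\Phi_2$ are available) enters.
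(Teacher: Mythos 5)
Your proof is correct. Note first that the paper itself does not prove this lemma: it is imported verbatim as {\cite[Theorem~4.1]{Nakai2008Studia}}, so there is no internal proof to compare against. Your argument is the canonical one for a generalized H\"older inequality in Orlicz-type spaces (pointwise Young-type inequality $\Rightarrow$ modular estimate $\Rightarrow$ norm estimate via convexity), and it is almost certainly the same one used in the cited source; this route goes back to O'Neil's original treatment of Orlicz convolution inequalities. A few small points worth noting: the ``i.e.'' after the normalization step tacitly uses that $\|f\|_{\Phi_1,\vp_1,B}=1$ forces $\frac1{\vp_1(r)}\fint_B\Phi_1(|f|)\,dx\le1$, which requires left-continuity of $\Phi_1$ plus monotone convergence (this is guaranteed for $\Phi_1\in\iPy$ but deserves a word). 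For the $t=0$ case a slick way to close it is to observe that $\Phi_1(u)=\Phi_3(v)=0$ makes the main computation valid for every $t>0$, giving $\Phi_2(uv/C)\le t\vp_2(r)$ for all $t>0$ and hence $\Phi_2(uv/C)=0$. Finally, your closing remark is accurate: the hypothesis $\vp_i\in\cGdec$ plays no role in the argument, and the inequality holds for arbitrary positive weight functions.
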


%%%===================================================================
%%%===================================================================
\section{Fractional maximal operator}\label{sec:Mr}
%%%===================================================================
%%%===================================================================

It is well known that
the Hardy-Littlewood maximal operator $M$ is bounded on $L^p(\R^n)$ 
if $1<p\le\infty$.
This boundedness was extended to Orlicz-Morrey spaces by \cite[Theorem~6.1]{Nakai2008Studia}.
Namely, if $\Phi$ is bijective and in $\ntwo$ and $\vp\in\cGdec$, 
then $M$ is bounded on $\LPp(\R^n)$. 
This result 
is valid for any $\Phi\in\bntwo$
by the modular inequality 
\begin{equation*} %\label{modular}
 \int_{\R^n} \Phi(Mf(x))\,dx
 \le
 \int_{\R^n} \Phi(C|f(x)|)\,dx
\end{equation*}
in \cite[Theorem~1.2.1]{Kokilashvili-Krbec1991}.

For the operator $\Mr$ we have the following theorem.
%=============================
\begin{thm}\label{thm:Mr}
%=============================
Let $\Phi,\Psi\in\biPy$, $\vp\in\cGdec$
and $\rho:(0,\infty)\to(0,\infty)$.
Assume that 
$\dlim_{r\to\infty}\vp(r)=0$
or that $\Psi^{-1}(t)/\Phi^{-1}(t)$ is almost decreasing on $(0,\infty)$.
If there exists a positive constant $A$ such that,
for all $r\in(0,\infty)$,
\begin{equation}\label{rho Pp Psp}
 \left(\sup_{0<t\le r}\rho(t)\right)\Phi^{-1}(\vp(r))
 \le
 A\Psi^{-1}(\vp(r)),
\end{equation}
then, for any positive constant $C_0$, there exists a positive constant $C_1$ such that, 
for all $f\in \LPp(\R^n)$ with $f\not\equiv0$,
\begin{equation}\label{Mr pointwise}
 \Psi\left(\frac{\Mr f(x)}{C_1\|f\|_{\LPp}}\right) 
 \le
 \Phi\left(\frac{Mf(x)}{C_0\|f\|_{\LPp}}\right),
 \quad x\in\R^n.
\end{equation}
Consequently, 
if $\Phi\in\bntwo$, then 
$\Mr$ is bounded from $\LPp(\R^n)$ to $\LPsp(\R^n)$.
\end{thm}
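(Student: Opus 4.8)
The plan is to prove the pointwise inequality \eqref{Mr pointwise} first and then deduce the boundedness as an easy corollary. Fix $f\in\LPp(\R^n)$ with $f\not\equiv0$ and normalize by setting $g=f/\|f\|_{\LPp}$, so that $\|g\|_{\LPp}=1$ and, for every ball $B=B(a,r)$, the modular estimate $\frac1{\vp(r)}\fint_B\Phi(|g(x)|)\,dx\le1$ holds. Fix $x\in\R^n$ and an arbitrary ball $B=B(a,r)\ni x$; I want to bound $\rho(r)\fint_B|g(y)|\,dy$ from above by a quantity involving $Mg(x)$. By Lemma~\ref{lem:p} (the estimate \eqref{fint_B f}) we have $\fint_B|g(y)|\,dy\le 2\Phi^{-1}(\vp(r))\|g\|_{\Phi,\vp,B}\le 2\Phi^{-1}(\vp(r))$. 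Combining this with hypothesis \eqref{rho Pp Psp}, which gives $\rho(r)\Phi^{-1}(\vp(r))\le\bigl(\sup_{0<t\le r}\rho(t)\bigr)\Phi^{-1}(\vp(r))\le A\Psi^{-1}(\vp(r))$, we obtain
\begin{equation*}
 \rho(r)\fint_B|g(y)|\,dy\le 2A\,\Psi^{-1}(\vp(r)).
\end{equation*}
This is one of the two competing bounds; the other, trivial one is $\rho(r)\fint_B|g(y)|\,dy\le\rho(r)Mg(x)$ whenever that is useful, but the real work is to interpolate these so that $\Psi$ of the left side is controlled by $\Phi$ of $Mg(x)$.

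The second step is the interpolation between the ``small average'' and ``large average'' regimes. Write $m=\fint_B|g(y)|\,dy$. If $m$ is large, then $\vp(r)$ must be small: indeed from $\frac1{\vp(r)}\fint_B\Phi(|g|)\le1$ and Jensen's inequality, $\Phi(m)\le\vp(r)$, hence $m\le\Phi^{-1}(\vp(r))$ and also $\vp(r)\ge\Phi(m)$. Since $\Psi^{-1}$ is increasing, $\Psi^{-1}(\vp(r))\ge\Psi^{-1}(\Phi(m))$; but I need an upper bound, so instead I use that $\vp(r)\le\Phi(m)$ is false in general — rather I argue on two cases according to the size of $\vp(r)$ relative to $\Phi(Mg(x)/C_0)$. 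This is exactly where the two alternative hypotheses enter. In the case $\dlim_{r\to\infty}\vp(r)=0$: by Remark~\ref{rem:vp bijective} we may assume $\vp$ is continuous and strictly decreasing with range $(0,\lim_{r\to0}\vp(r))$, so for a given $x$ either $\vp(r)$ is bounded below along all admissible balls (handle the ``bounded away from $0$'' radii by the trivial bound $\Mr g(x)\le C\sup_{0<t\le R}\rho(t)\cdot Mg(x)$ combined with \eqref{rho Pp Psp}), or we split at the radius $r_0$ where $\vp(r_0)\sim\Phi(Mg(x)/C_0)$. For $r\le r_0$ (large $\vp$), use $m\le Mg(x)$ directly together with $\rho(r)\le\sup_{0<t\le r_0}\rho(t)$ and \eqref{rho Pp Psp} at radius $r_0$; for $r\ge r_0$ (small $\vp$), use $m\le\Phi^{-1}(\vp(r))$ and $\rho(r)\Phi^{-1}(\vp(r))\le A\Psi^{-1}(\vp(r))\le A\Psi^{-1}(\vp(r_0))$, and $\Psi^{-1}(\vp(r_0))$ is comparable to $\Psi^{-1}(\Phi(Mg(x)/C_0))$. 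Taking $\Psi$ of the supremum over all such $r$ and using the doubling of $\vp$ and of $\Phi^{-1},\Psi^{-1}$ (Remark~\ref{rem:D2 n2}(iii)) yields $\Psi(\Mr g(x)/C_1)\le\Phi(Mg(x)/C_0)$ for a suitable $C_1$. In the alternative case, where $t\mapsto\Psi^{-1}(t)/\Phi^{-1}(t)$ is almost decreasing, the same splitting works even without $\vp(r)\to0$, because almost-monotonicity of $\Psi^{-1}/\Phi^{-1}$ lets me pass from the bound at radius $r$ to the bound at the reference radius $r_0$ without needing the range of $\vp$ to reach $0$.

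The final step is the boundedness. Assume now $\Phi\in\bntwo$; by Remark~\ref{rem:D2 n2}(i) this forces $\Phi\in\biPy$, and the Hardy--Littlewood maximal operator $M$ is bounded on $\LPp(\R^n)$ — this is stated at the beginning of Section~\ref{sec:Mr}, being valid for any $\Phi\in\bntwo$ via the modular inequality from \cite[Theorem~1.2.1]{Kokilashvili-Krbec1991}. Apply \eqref{Mr pointwise} with $C_0$ chosen equal to the norm constant of $M$ on $\LPp$, say $C_0=\|M\|_{\LPp\to\LPp}$; then for any ball $B=B(a,r)$,
\begin{equation*}
 \frac1{\vp(r)}\fint_B\Psi\!\left(\frac{\Mr f(x)}{C_1\|f\|_{\LPp}}\right)dx
 \le
 \frac1{\vp(r)}\fint_B\Phi\!\left(\frac{Mf(x)}{C_0\|f\|_{\LPp}}\right)dx\le 1,
\end{equation*}
the last inequality because $\|Mf/(C_0\|f\|_{\LPp})\|_{\LPp}\le1$. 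Taking the infimum over $\lambda$ and then the supremum over $B$ gives $\|\Mr f\|_{\LPsp}\le C_1\|f\|_{\LPp}$, as desired.

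**Main obstacle.** The delicate point is Step~2: the case analysis on the radii and checking that the two alternative hypotheses each suffice to convert the modular bound at a general radius $r$ into a bound at the reference radius $r_0$ determined by $Mf(x)$. One must handle with care the degenerate situations where $\vp$ does not actually reach $0$ (so there is no $r_0$ in range and only the ``small average'' or only the ``large average'' regime occurs), and one must track the absorption of all doubling constants into the single constant $C_1$ while keeping $C_0$ free — this free parameter is what makes the corollary work, since $C_0$ must be tuned to $\|M\|_{\LPp\to\LPp}$ only at the very end.
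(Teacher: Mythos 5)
Your proposal is correct and follows essentially the same route as the paper: normalize $f$, bound $\rho(r)\fint_B|f|$ by the two competing estimates $\fint_B|f|\le 2\Phi^{-1}(\vp(r))$ (from \eqref{fint_B f}) and $\fint_B|f|\le Mf(x)$, split on whether $\Phi(Mf(x)/C_0)$ exceeds $\vp(r)$, and in the small-$\vp$ case introduce a reference radius $t_0\ge r$ with $\vp(t_0)=\Phi(Mf(x)/C_0)$ (available by continuity and $\vp(r)\to0$) or invoke almost decreasingness of $\Psi^{-1}/\Phi^{-1}$, then pass to $\Psi$ of the supremum; the boundedness then follows by taking $C_0=\|M\|_{\LPp\to\LPp}$ and integrating the pointwise inequality over each ball. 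Your exposition of the case split is a bit more convoluted than the paper's (which uses the exact equation $\vp(t_0)=\Phi(Mf(x)/C_0)$ rather than $\sim$, avoiding the need for doubling of $\Phi^{-1},\Psi^{-1}$), but the ideas are identical and the argument is sound.
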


%=============================
\begin{rem}\label{rem:Mr}
%=============================
If $\rho$ is almost increasing or 
if $\Psi^{-1}(t)/\Phi^{-1}(t)$ is almost decreasing, 
then the inequality
$\rho(r)\Phi^{-1}(\vp(r))\ls\Psi^{-1}(\vp(r))$
implies \eqref{rho Pp Psp}.
\end{rem}

%=================================================
\begin{proof}[\bf Proof of Theorem~\ref{thm:Mr}]
%=================================================
We may assume that $\Phi,\Psi\in\iPy$.
We may also assume that $\vp$ is continuous and strictly decreasing, % and 
see Remark~\ref{rem:vp bijective}.
Let $f\in \LPp(\R^n)$, and let $x\in\R^n$. 
To prove \eqref{Mr pointwise}
we may assume that $\|f\|_{\LPp}=1$ and that $0<Mf(x)<\infty$.

We show that, for any ball $B=B(a,r)$ containing $x$,
\begin{equation}\label{rho mean}
 \rho(r)\fint_B|f|\le C_1 \Psi^{-1}\left(\Phi\left(\frac{Mf(x)}{C_0}\right)\right).
\end{equation}
Then we have the pointwise estimate
\begin{equation*} %\label{Mr M pointwise}
     \Psi\left(\frac{\Mr f(x)}{C_1}\right) 
     \le
     \Phi\left(\frac{Mf(x)}{C_0}\right),
\end{equation*}
which is the conclusion.

To show \eqref{rho mean}, we consider two cases:
\begin{equation*}
 \Phi\left(\frac{Mf(x)}{C_0}\right) \ge \vp(r)
 \quad\text{or}\quad
 \Phi\left(\frac{Mf(x)}{C_0}\right) \le \vp(r).
\end{equation*}
If
$ %\begin{equation}\label{vp small}
\Phi\left(\frac{Mf(x)}{C_0}\right) \geq \vp(r),
$ %\end{equation}
then by \eqref{fint_B f} and $\|f\|_{\Phi,\vp,B}\le1$,
we have
\begin{equation*}
 \rho(r)\fint_{B}|f|
 \le
 2\rho(r)\Phi^{-1}(\vp(r)).
\end{equation*}
Combining this inequality with \eqref{rho Pp Psp} we have
\begin{equation*}
 \rho(r)\fint_{B}|f|
 \le
 2A\Psi^{-1}(\vp(r)) 
 \le
 2A\Psi^{-1}\left(\Phi\left(\frac{Mf(x)}{C_0}\right)\right).
\end{equation*}
Conversely, let
$ %\begin{equation}
 \Phi\left(\frac{Mf(x)}{C_0}\right) \le \vp(r).
$ %\end{equation}
If $\dlim_{r\to\infty}\vp(r)=0$
then we can choose $t_0\in[r,\infty)$ such that
\begin{equation*}
 \Phi\left(\frac{Mf(x)}{C_0}\right) = \vp(t_0).
\end{equation*}
%by the assumption on $\vp$.
Using \eqref{rho Pp Psp} and \eqref{inverse ineq}, we have
\begin{align*}
 \rho(r)
 \le
 \sup_{0 < t \le t_0}\rho(t)
 \le
 A\frac{\Psi^{-1}(\vp(t_0))}{\Phi^{-1}(\vp(t_0))}
 =
 A\frac{\Psi^{-1}\left(\Phi\left(\frac{Mf(x)}{C_0}\right)\right)}
 {\Phi^{-1}\left(\Phi\left(\frac{Mf(x)}{C_0}\right)\right)} 
 \le
 A\frac{\Psi^{-1}\left(\Phi\left(\frac{Mf(x)}{C_0}\right)\right)}{\frac{Mf(x)}{C_0}}.
\end{align*}
If $\Psi^{-1}(t)/\Phi^{-1}(t)$ is almost decreasing, 
then $\Phi\left(\frac{Mf(x)}{C_0}\right) \le \vp(r)$
implies that
\begin{align*}
 \rho(r)
 \le
 A\frac{\Psi^{-1}(\vp(r))}{\Phi^{-1}(\vp(r))}
 \ls
 \frac{\Psi^{-1}\left(\Phi\left(\frac{Mf(x)}{C_0}\right)\right)}
 {\Phi^{-1}\left(\Phi\left(\frac{Mf(x)}{C_0}\right)\right)} 
 \le
 \frac{\Psi^{-1}\left(\Phi\left(\frac{Mf(x)}{C_0}\right)\right)}{\frac{Mf(x)}{C_0}}.
\end{align*}
In any way we have
\begin{align*}
 \rho(r)\fint_B |f|
 \le
 AC_0\frac{\Psi^{-1}\left(\Phi\left(\frac{Mf(x)}{C_0}\right)\right)}{Mf(x)}\fint_B |f|
 \le
 AC_0\Psi^{-1}\left(\Phi\left(\frac{Mf(x)}{C_0}\right)\right).
\end{align*}
Then we have \eqref{rho mean} and
the proof is complete.
\end{proof}

%%%===================================================================
%%%===================================================================
\section{Orlicz-Campanato spaces and relations to Orlicz-Morrey spaces}\label{sec:OMOC}
%%%===================================================================
%%%===================================================================

In this section we define Orlicz-Campanato spaces 
and investigate their relations to Orlicz-Morrey spaces.

%%%=======================
\begin{defn}[Orlicz-Campanato space]\label{defn:OrliczCampanato}
%%%=======================
For $\Phi\in\biPy$ and  
$\vp:(0,\infty)\to(0,\infty)$,
let 
\begin{gather*}
 \cLPp(\R^n)
 = \left\{ f\in L^1_{\loc}(\R^n):
   \|f\|_{\cLPp} <\!\infty 
   \right\},            \\
 \|f\|_{\cLPp} 
 = \sup_{B}  \|f-f_B\|_{\Phi,\vp,B},
\end{gather*}
where the supremum is taken over all balls $B$ in $\R^n$
and 
$\|f\|_{\Phi,\vp,B}$ is as in \eqref{PpB norm}.
\end{defn}

Then $\|\cdot\|_{\cLPp}$ is a quasi-norm modulo constant functions
and thereby $\cLPp(\R^n)$ is a quasi-Banach space.
If $\Phi\in\iPy$, then $\|\cdot\|_{\LPp(\R^n)}$ is a norm modulo constant functions
and thereby $\cLPp(\R^n)$ is a Banach space.
If $\Phi\approx\Psi$ and $\vp\sim\psi$, then
$\cLPp(\R^n)=\cLPsps(\R^n)$ with equivalent quasi-norms.

If $\Phi(r)=r^p$ $(1\le p<\infty)$, 
then we denote 
$\cLPp(\R^n)$ by $\cL^{(p,\vp)}(\R^n)$, 
which coincides with $\cL_{p,\vp^p}(\R^n)$ defined by Definition~\ref{defn:gC}.

%---------------------------------------------------------------------
In this section we prove the following two theorems.
Let $\cC$ be the set of all constant functions.
The first theorem is an extension of \cite[Theorem~2.1]{Mizuhara1995}
and \cite[Theorem~2.1]{Nakai2006Studia}.
%============================
\begin{thm}\label{thm:C sim M}
%============================
Let $\Phi\in\biPy$ and $\vp\in\cGdec$.
Assume that $\Phi\in\bdtwo$
and that $\vp$ satisfies \eqref{int vp}.
Then
\begin{equation*}
 \cLPp(\R^n)/\cC=\LPp(\R^n)
 \quad\text{and}\quad
 \|f\|_{\cLPp}\sim\|f-\dlim_{r\to\infty}f_{B(0,r)}\|_{\LPp}.
\end{equation*}
More precisely,
for every $f\in\cLPp(\R^n)$, $f_{B(0,r)}$ converges as $r\to\infty$,
and the mapping $f\mapsto f-\dlim_{r\to\infty}f_{B(0,r)}$ is bijective and bicontinuous 
from $\cLPp(\R^n)/\cC$ to $\LPp(\R^n)$.
In this case $\dlim_{r\to\infty}f_{B(a,r)}=\dlim_{r\to\infty}f_{B(0,r)}$ for all $a\in\R^n$.
\end{thm}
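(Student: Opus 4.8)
The plan is to establish the equivalence $\cLPp(\R^n)/\cC=\LPp(\R^n)$ with comparable norms by proving the two inclusions separately, the easy one being $\LPp\hookrightarrow\cLPp$. Indeed, if $f\in\LPp(\R^n)$, then for any ball $B=B(a,r)$ one writes $f-f_B=(f)-(f_B)$ and uses the triangle inequality for $\|\cdot\|_{\Phi,\vp,B}$ together with the elementary estimate $|f_B|\le 2\Phi^{-1}(\vp(r))\|f\|_{\Phi,\vp,B}$ from Lemma~\ref{lem:p} (the $p=1$ case) and the formula $\|\chi_B\|_{\Phi,\vp,B}=1/\Phi^{-1}(\vp(r))$ from \eqref{chi norm B}; this gives $\|f-f_B\|_{\Phi,\vp,B}\le 3\|f\|_{\Phi,\vp,B}$, hence $\|f\|_{\cLPp}\le 3\|f\|_{\LPp}$. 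In particular $f\mapsto f$ realizes $\LPp$ inside $\cLPp$, and the constant function subtracted off is $0=\lim_{r\to\infty}f_{B(0,r)}$ since for $f\in\LPp$ one has $|f_{B(0,r)}|\le 2\Phi^{-1}(\vp(r))\|f\|_{\LPp}\to 0$ as $r\to\infty$ (using $\vp\in\cGdec$ and the doubling/monotonicity, $\vp(r)\to$ its infimum; more carefully, $\vp(r)r^n$ almost increasing forces $\vp(r)\to 0$ unless... — one checks $\Phi^{-1}(\vp(r))\fint_{B(0,r)}|f|$ stays bounded while $\fint_{B(0,r)}|f|\le 2\Phi^{-1}(\vp(r))\to$, so actually one needs the averaging; the correct argument is $|f_{B(0,2r)}-f_{B(0,r)}|\lesssim\Phi^{-1}(\vp(r))\|f\|_{\LPp}$ is summable, giving convergence, and boundedness of $\LPp$-functions forces the limit to be $0$).

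The substantial direction is $\cLPp\hookrightarrow\LPp$ modulo constants. Given $f\in\cLPp(\R^n)$, the first step is to show $f_{B(0,r)}$ converges as $r\to\infty$. For $r<s$ with $s\le 2r$ one estimates $|f_{B(0,r)}-f_{B(0,s)}|\le\fint_{B(0,r)}|f-f_{B(0,s)}|\le\frac{|B(0,s)|}{|B(0,r)|}\fint_{B(0,s)}|f-f_{B(0,s)}|\lesssim\Phi^{-1}(\vp(s))\|f\|_{\cLPp}$ via Lemma~\ref{lem:p} and the doubling of $\vp$; chaining dyadic scales $r_k=2^kr$ and summing, the convergence of $\sum_k\Phi^{-1}(\vp(2^kr))$ is exactly what Lemma~\ref{lem:int Phi vp} delivers from the hypothesis \eqref{int vp} on $\vp$ (comparing the series to $\int_r^\infty\Phi^{-1}(\vp(t))\,dt/t\lesssim\Phi^{-1}(\vp(r))$). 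This simultaneously shows that the limit is independent of the center $a$, since $|f_{B(a,r)}-f_{B(0,r)}|$ can be controlled on a common ball $B(0,Cr)$ containing both, again using $\Phi^{-1}(\vp(Cr))\sim\Phi^{-1}(\vp(r))$ by doubling.

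Next, set $c_f=\lim_{r\to\infty}f_{B(0,r)}$ and $g=f-c_f$; I must show $\|g\|_{\LPp}\lesssim\|f\|_{\cLPp}$. Fix a ball $B=B(a,r)$. Write $g=(f-f_B)+(f_B-c_f)$. The first piece contributes $\|f-f_B\|_{\Phi,\vp,B}\le\|f\|_{\cLPp}$ directly. For the second piece, which is a constant on $B$, use $\|(f_B-c_f)\chi_B\|_{\Phi,\vp,B}=|f_B-c_f|/\Phi^{-1}(\vp(r))$ by \eqref{chi norm B}, so it suffices to show $|f_B-c_f|\lesssim\Phi^{-1}(\vp(r))\|f\|_{\cLPp}$. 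But $|f_B-c_f|=\lim_{s\to\infty}|f_{B(a,r)}-f_{B(a,s)}|$ and the dyadic-telescoping estimate above (with center $a$) bounds this by $\sum_{k\ge 0}|f_{B(a,2^kr)}-f_{B(a,2^{k+1}r)}|\lesssim\sum_{k\ge 0}\Phi^{-1}(\vp(2^{k+1}r))\|f\|_{\cLPp}\lesssim\Phi^{-1}(\vp(r))\|f\|_{\cLPp}$, again by Lemma~\ref{lem:int Phi vp}. Combining, $\|g\|_{\Phi,\vp,B}\lesssim\|f\|_{\cLPp}$ uniformly in $B$, hence $g\in\LPp$ with the required bound. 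Finally, injectivity modulo $\cC$ and surjectivity of $f\mapsto f-c_f$ onto $\LPp$ follow from the two inclusions and the fact that $\LPp\subset\cLPp$ with $c_f=0$ for $f\in\LPp$ as noted; bicontinuity is just the two norm inequalities.

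The main obstacle I anticipate is getting the $\Delta_2$-condition ($\Phi\in\bdtwo$) to do its job cleanly in two places: first, the reduction from $\biPy$ to $\iPy$ so that Lemmas~\ref{lem:p}, \ref{lem:int Phi vp} and \eqref{chi norm B} apply verbatim (handled by the remark at the end of Section~\ref{sec:main} plus \eqref{approx equiv}); second, and more seriously, the step $\frac{|B(0,s)|}{|B(0,r)|}\fint_{B(0,s)}|f-f_{B(0,s)}|$ when $s$ and $r$ are comparable is fine, but the genuinely delicate point is that Lemma~\ref{lem:p}'s bound $\fint_B|f-f_B|\le 2\Phi^{-1}(\vp(r))\|f-f_B\|_{\Phi,\vp,B}$ only controls the $L^1$-average, which is exactly enough for the telescoping sum — one should double-check no $\Delta_2$ is secretly needed for the convexity step $\Phi^{-1}(a+b)\le\Phi^{-1}(a)+\Phi^{-1}(b)$ type manipulations, but in fact only the doubling \eqref{Phi-1 doubl} of $\Phi^{-1}$ (automatic from concavity, Remark~\ref{rem:D2 n2}(iii)) is used there, while $\Delta_2$ enters solely through Lemma~\ref{lem:int Phi vp} to sum the geometric-type series. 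Everything else is routine.
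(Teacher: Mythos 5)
Your proof is correct and follows the same route as the paper: the easy inclusion $\LPp(\R^n)\subset\cLPp(\R^n)$ via $|f_B|\le 2\Phi^{-1}(\vp(r))\|f\|_{\LPp}$ and \eqref{chi norm B}, and the reverse inclusion via the dyadic telescoping estimate for $|f_B-\sigma(f)|$ controlled by Lemma~\ref{lem:int Phi vp}; the only cosmetic difference is that you split $f-\sigma(f)=(f-f_B)+(f_B-\sigma(f))$ with the norm triangle inequality for $\|\cdot\|_{\Phi,\vp,B}$ where the paper performs the same split at the modular level using the convexity of $\Phi$. Your hesitation in the first paragraph about whether $\Phi^{-1}(\vp(r))\to 0$ is needless: the finiteness of $\int_1^\infty\Phi^{-1}(\vp(t))\,dt/t$ delivered by Lemma~\ref{lem:int Phi vp}, together with the almost-decreasingness of $r\mapsto\Phi^{-1}(\vp(r))$, forces that limit to vanish, exactly as exploited in the paper's Lemma~\ref{lem:s(f)}.
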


%=================================================
\begin{thm}\label{thm:Ms}
%=================================================
Let $\Phi\in\biPy$ and $\vp\in\cGdec$.
If $\Phi\in\bdtwo$,
then there exists a positive constant $C$ such that, for all $f\in L^{1}_{\loc}(\R^n)$,
\begin{equation}\label{Cam<MsMor}
 \|f\|_{\cLPp}\le C\|\Ms f\|_{\LPp}.
\end{equation} 
Moreover,
if $\Phi\in\bntwo$ and $\vp$ satisfies \eqref{int vp},
then 
\begin{equation}\label{Cam sim MsMor}
 C^{-1}\|f\|_{\cLPp}\le \|\Ms f\|_{\LPp}\le C\|f\|_{\cLPp}.
\end{equation}
\end{thm}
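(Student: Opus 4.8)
The plan is to establish the two inequalities \eqref{Cam<MsMor} and (the right-hand inequality of) \eqref{Cam sim MsMor} separately; the left-hand inequality of \eqref{Cam sim MsMor} is precisely \eqref{Cam<MsMor}. As noted at the end of Section~\ref{sec:main} we may assume $\Phi\in\iPy$, and by Remark~\ref{rem:D2 n2}(ii) together with \eqref{approx equiv} we may even assume $\Phi\in\dtwo$; by Remark~\ref{rem:vp bijective} we may take $\vp$ continuous and strictly decreasing. For the right-hand inequality of \eqref{Cam sim MsMor}, assume $\|f\|_{\cLPp}<\infty$. Since $\Phi\in\bdtwo$ and $\vp$ satisfies \eqref{int vp}, Theorem~\ref{thm:C sim M} applies, so $c_\infty:=\dlim_{r\to\infty}f_{B(0,r)}$ exists and $g:=f-c_\infty\in\LPp$ with $\|g\|_{\LPp}\le C\|f\|_{\cLPp}$. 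Because $\Ms$ annihilates constants we have $\Ms f=\Ms g$, and for every ball $B\ni x$ the bound $\fint_B|g-g_B|\le2\fint_B|g|\le2Mg(x)$ gives $\Ms f\le 2Mg$ pointwise. As $\Phi\in\bntwo$, the Hardy--Littlewood maximal operator $M$ is bounded on $\LPp$ (recalled at the beginning of Section~\ref{sec:Mr}), whence $\|\Ms f\|_{\LPp}\le 2\|Mg\|_{\LPp}\le C\|g\|_{\LPp}\le C\|f\|_{\cLPp}$.

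Now we turn to \eqref{Cam<MsMor}. Fix a ball $B=B(a,r)$; we may assume $N:=\|\Ms f\|_{\LPp}<\infty$ and must find a universal constant $c_0$ with $\|f-f_B\|_{\Phi,\vp,B}\le c_0 N$. Taking a cube $Q$ with $B\subset Q\subset\sqrt n\,B$ (so $|Q|\sim|B|$ and $\vp(\ell(Q))\sim\vp(r)$), it suffices to bound $\|f-f_Q\|_{\Phi,\vp,Q}$: indeed $|f_Q-f_B|\ls\fint_Q|f-f_Q|\ls\Phi^{-1}(\vp(r))\,\|f-f_Q\|_{\Phi,\vp,Q}$ by \eqref{fint_B f}, while $\|\chi_B\|_{\Phi,\vp,B}=1/\Phi^{-1}(\vp(r))$ by \eqref{chi norm B}. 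The heart of the matter is a local Fefferman--Stein good-$\lambda$ inequality for the dyadic maximal function $g^{\ast}:=\MQd\big((f-f_Q)\chi_Q\big)$ (relative to the dyadic subcubes of $Q$), which satisfies $|f-f_Q|\le g^{\ast}$ a.e.\ on $Q$: there exist $A=A_n$ and $c_n>0$ such that for every $\gamma\in(0,1)$ and every $\lambda>0$,
\begin{equation*}
 \big|\{x\in Q:\ g^{\ast}(x)>A\lambda,\ \Ms f(x)\le\gamma\lambda\}\big|
 \le c_n\gamma\,\big|\{x\in Q:\ g^{\ast}(x)>\lambda\}\big|.
\end{equation*}
This is proved by the Calder\'on--Zygmund decomposition of $|f-f_Q|$ on $Q$ at height $\lambda$: on each selected dyadic cube $Q_j$ one has $\lambda<\fint_{Q_j}|f-f_Q|\le 2^n\lambda$, hence $|f_{Q_j}-f_Q|\le 2^n\lambda$; if the set on the left meets $Q_j$ at a point $x_0$, then comparing $Q_j$ with a ball containing it gives $\fint_{Q_j}|f-f_{Q_j}|\le C_n\Ms f(x_0)\le C_n\gamma\lambda$, and Chebyshev's inequality on each $Q_j$ followed by $\sum_j|Q_j|=|\{x\in Q:g^{\ast}(x)>\lambda\}|$ yields the claim.

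With $\gamma$ chosen small in terms of $\Phi$ and $n$, multiplying the good-$\lambda$ inequality by $d\big[\Phi(\lambda/\delta)\big]$ and integrating over $\lambda\in(0,\infty)$ — after truncating $g^{\ast}$ at an arbitrary level $L$ to make the integrals finite and then letting $L\to\infty$ — and using $\Phi\in\dtwo$, one obtains
\begin{equation*}
 \int_Q\Phi\!\Big(\frac{g^{\ast}(x)}{c_1\delta}\Big)dx
 \le\tfrac12\int_Q\Phi\!\Big(\frac{g^{\ast}(x)}{\delta}\Big)dx
 +c_2\int_Q\Phi\!\Big(\frac{\Ms f(x)}{c_3\delta}\Big)dx,
\end{equation*}
hence $\int_Q\Phi(g^{\ast}/(c_1\delta))\le 2c_2\int_Q\Phi(\Ms f/(c_3\delta))$ for all $\delta>0$. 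Since $\vp(\ell(Q))\sim\vp(r)$ and, comparing $Q$ with a ball, $\|\Ms f\|_{\Phi,\vp,Q}\ls\|\Ms f\|_{\LPp}=N$, choosing $\delta$ to be a suitable multiple of $N$ and rescaling the dilation constants by the convexity of $\Phi$ (to absorb the factor $2c_2$) gives $\frac1{\vp(r)}\fint_Q\Phi\big(|f-f_Q|/(c_0N)\big)\le 1$, i.e.\ $\|f-f_Q\|_{\Phi,\vp,Q}\le c_0N$. Combining this with the reduction above and taking the supremum over all balls $B$ proves \eqref{Cam<MsMor}.

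The main obstacle is the good-$\lambda$ step and its conversion into the modular inequality: one must reconcile the ball-based operator $\Ms f$ with the cube-based Calder\'on--Zygmund stopping procedure, and then legitimately absorb the $g^{\ast}$-term on the right, for which the a priori finiteness of $\int_Q\Phi(g^{\ast}/\delta)$ is not automatic and is forced by truncation. The remaining ingredients — the ball/cube comparison, the doubling of $\vp$, and the bookkeeping of dilation constants via the convexity of $\Phi$ — are routine. This argument is the Orlicz--Morrey counterpart of those of Mizuhara~\cite{Mizuhara1995} and Nakai~\cite{Nakai2006Studia}.
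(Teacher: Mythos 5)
Your proposal is correct and follows essentially the same route as the paper's own proof: a local dyadic good-$\lambda$ inequality converted to a modular inequality by the distribution-function integration (with a truncation to ensure finiteness), then applied on a cube $Q$ with $B\subset Q\subset\sqrt{n}B$ together with a Jensen/triangle step for $|f_Q-f_B|$, while the upper bound in \eqref{Cam sim MsMor} is obtained from Theorem~\ref{thm:C sim M} and the boundedness of $M$ on $\LPp$. The only cosmetic difference is that the paper states the good-$\lambda$ inequality (Lemma~\ref{lem:good lambda}, cited from Tsutsui and Komori--Furuya) between $\Md_Q f$ and the dyadic sharp function $\Msd_Q f$ and only at the end uses $\Msd_Q f\le C_n\Ms f$, whereas you build the ball/cube comparison directly into the good-$\lambda$ step and work with $M^{\mathrm{dy}}_Q(f-f_Q)$ from the start; do note that, as in the paper's Lemma~\ref{lem:good lambda} (which requires $\lambda>|f|_Q$), your displayed good-$\lambda$ is valid only once $\gamma$ is taken small enough (or $\lambda\gtrsim\fint_Q|f-f_Q|$), but since you choose $\gamma$ small anyway this is harmless.
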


By Theorem~\ref{thm:C sim M} and Theorem~\ref{thm:Ms} we have the following corollary.

%=================================================
\begin{cor}\label{cor:Ms}
%=================================================
Let $\Phi\in\biPy$ and $\vp\in\cGdec$.
Assume that $\Phi\in\bdtwo$
and that $\vp$ satisfies \eqref{int vp}.
Then there exist a positive constant $C$ such that, 
for any $f\in\LPp(\R^n)$ satisfying $\dlim_{r\to\infty}f_{B(0,r)}=0$,
\begin{equation}\label{f Msf}
 \|f\|_{\LPp}\le C\|\Ms f\|_{\LPp}.
\end{equation} 
Moreover,
if $\Phi\in\bntwo$,
then
\begin{equation*}
 C^{-1}\|f\|_{\LPp}\le \|\Ms f\|_{\LPp}\le C\|f\|_{\LPp}.
\end{equation*}
\end{cor}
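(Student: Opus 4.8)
The plan is to read this off directly from Theorem~\ref{thm:C sim M} and Theorem~\ref{thm:Ms}, both of which are available under the present hypotheses ($\Phi\in\biPy\cap\bdtwo$, $\vp\in\cGdec$ satisfying \eqref{int vp}, and, for the last assertion, also $\Phi\in\bntwo$; recall $\bntwo\subset\biPy$). Concretely, I would show $\|f\|_{\cLPp}\sim\|f\|_{\LPp}$ for the functions in question and then feed this into the Orlicz-Campanato/sharp-maximal comparison of Theorem~\ref{thm:Ms}.

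First I would fix $f\in\LPp(\R^n)$ with $\dlim_{r\to\infty}f_{B(0,r)}=0$ and observe that $f\in\cLPp(\R^n)$. Indeed, for every ball $B=B(a,r)$,
\[
 \|f-f_B\|_{\Phi,\vp,B}\le\|f\|_{\Phi,\vp,B}+|f_B|\,\|\chi_B\|_{\Phi,\vp,B}\le3\|f\|_{\Phi,\vp,B},
\]
where the last step uses \eqref{fint_B f}, which gives $|f_B|\le2\Phi^{-1}(\vp(r))\|f\|_{\Phi,\vp,B}$, together with $\|\chi_B\|_{\Phi,\vp,B}=1/\Phi^{-1}(\vp(r))$ from \eqref{chi norm B}; taking the supremum over $B$ yields $\|f\|_{\cLPp}\le3\|f\|_{\LPp}<\infty$. (Alternatively this embedding is already contained in the identity $\cLPp(\R^n)/\cC=\LPp(\R^n)$ of Theorem~\ref{thm:C sim M}, since $\|\cdot\|_{\cLPp}$ is insensitive to additive constants.) Now Theorem~\ref{thm:C sim M} applies to $f$: it tells us that $f_{B(0,r)}$ converges as $r\to\infty$ and that $\|f\|_{\cLPp}\sim\|f-\dlim_{r\to\infty}f_{B(0,r)}\|_{\LPp}$. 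Using the hypothesis $\dlim_{r\to\infty}f_{B(0,r)}=0$, the right-hand side collapses to $\|f\|_{\LPp}$, so $\|f\|_{\cLPp}\sim\|f\|_{\LPp}$; in particular $\|f\|_{\LPp}\le C\|f\|_{\cLPp}$.

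Finally I would invoke Theorem~\ref{thm:Ms}: its first inequality \eqref{Cam<MsMor} (valid whenever $\Phi\in\bdtwo$) gives $\|f\|_{\cLPp}\le C\|\Ms f\|_{\LPp}$, and chaining this with the previous step produces \eqref{f Msf}. For the ``moreover'' part I would add the assumption $\Phi\in\bntwo$ and use the two-sided bound \eqref{Cam sim MsMor} of Theorem~\ref{thm:Ms} (which requires $\Phi\in\bntwo$ and \eqref{int vp}) together with $\|f\|_{\cLPp}\sim\|f\|_{\LPp}$ to obtain $C^{-1}\|f\|_{\LPp}\le\|\Ms f\|_{\LPp}\le C\|f\|_{\LPp}$. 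There is no substantial obstacle here: the corollary is essentially bookkeeping on top of the two theorems. The only point requiring care is the reduction in the middle paragraph — one must genuinely know $f\in\cLPp(\R^n)$ before Theorem~\ref{thm:C sim M} can be applied to $f$, and one must use the vanishing of $\dlim_{r\to\infty}f_{B(0,r)}$ to replace $f-\dlim_{r\to\infty}f_{B(0,r)}$ by $f$ itself.
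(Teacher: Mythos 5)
Your proposal is correct and follows exactly the route the paper intends: the paper states the corollary with the single-line justification ``By Theorem~\ref{thm:C sim M} and Theorem~\ref{thm:Ms},'' and your argument — establishing $f\in\cLPp(\R^n)$ with $\|f\|_{\cLPp}\sim\|f\|_{\LPp}$ via Theorem~\ref{thm:C sim M} and the vanishing of $\dlim_{r\to\infty}f_{B(0,r)}$, then chaining with \eqref{Cam<MsMor} (resp.\ \eqref{Cam sim MsMor}) — is precisely that bookkeeping. Nothing to add.
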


%-------------------------------------------------------------------------------
To prove the theorems we prepare
several lemmas.

%==================
\begin{lem}\label{lem:fB1-fB2}
%==================
Let $\Phi\in\biPy$ and $\vp:(0,\infty)\to(0,\infty)$.
Then,
for any two balls $B_1$ and $B_2$ such that $B_1 \subset B_2$, 
\begin{equation}\label{B1B2}
 |f_{B_1} - f_{B_2}| 
 \le
 2 \frac{|B_2|}{|B_1|} \Phi^{-1}(\vp(r_2)) \|f\|_{\cLPp},
\end{equation}
where $r_2$ is the radius of $B_2$.
\end{lem}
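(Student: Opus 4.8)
The plan is to estimate $|f_{B_1}-f_{B_2}|$ by inserting $f_{B_2}$ inside the average over $B_1$ and then controlling the resulting mean oscillation. Writing
\begin{equation*}
 |f_{B_1}-f_{B_2}|
 =\left|\fint_{B_1}(f(x)-f_{B_2})\,dx\right|
 \le\frac1{|B_1|}\int_{B_1}|f(x)-f_{B_2}|\,dx
 \le\frac{|B_2|}{|B_1|}\cdot\frac1{|B_2|}\int_{B_2}|f(x)-f_{B_2}|\,dx,
\end{equation*}
where the last step uses $B_1\subset B_2$ to enlarge the domain of integration (the integrand is nonnegative). So it suffices to bound $\fint_{B_2}|f-f_{B_2}|$ by $2\Phi^{-1}(\vp(r_2))\|f\|_{\cLPp}$.

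First I would apply Lemma~\ref{lem:p}, specifically inequality \eqref{fint_B f}, to the function $x\mapsto f(x)-f_{B_2}$ on the ball $B_2=B(a,r_2)$: this gives
\begin{equation*}
 \fint_{B_2}|f(x)-f_{B_2}|\,dx
 \le 2\Phi^{-1}(\vp(r_2))\,\|f-f_{B_2}\|_{\Phi,\vp,B_2}.
\end{equation*}
Then by the very definition of the Orlicz--Campanato quasi-norm (Definition~\ref{defn:OrliczCampanato}), the factor $\|f-f_{B_2}\|_{\Phi,\vp,B_2}$ is dominated by $\sup_B\|f-f_B\|_{\Phi,\vp,B}=\|f\|_{\cLPp}$. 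Chaining these inequalities yields exactly \eqref{B1B2}.

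There is really no serious obstacle here: the statement is a direct combination of the triangle inequality for integrals, the monotonicity of integrals of nonnegative functions under domain enlargement, and the already-established estimate \eqref{fint_B f}. The only point requiring a moment's care is the order of the two reductions — one must enlarge the domain from $B_1$ to $B_2$ \emph{before} invoking \eqref{fint_B f}, since \eqref{fint_B f} is phrased with the averaging and the Orlicz norm taken over the same ball; applying it directly on $B_1$ with the norm $\|f-f_{B_2}\|_{\Phi,\vp,B_1}$ would not be controlled by $\|f\|_{\cLPp}$, which involves $\|f-f_{B_1}\|_{\Phi,\vp,B_1}$ rather than $\|f-f_{B_2}\|_{\Phi,\vp,B_1}$. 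Passing to $B_2$ first, at the cost of the volume ratio $|B_2|/|B_1|$, sidesteps this and matches the form of the claimed bound.
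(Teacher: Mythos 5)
Your argument is correct and is essentially identical to the paper's proof: same triangle-inequality step, same domain enlargement from $B_1$ to $B_2$ at the cost of the volume ratio, and the same application of \eqref{fint_B f} combined with the definition of $\|\cdot\|_{\cLPp}$. The paper just compresses these steps into one chain of inequalities without the explanatory commentary.
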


\begin{proof}
By \eqref{fint_B f} we have
\begin{multline*}
 |f_{B_1} - f_{B_2}| 
 \le
 \frac1{|B_1|} \int_{B_1} |f(x) - f_{B_2}|\,dx
 \le 
 \frac{|B_2|}{|B_1|} \fint_{B_2} |f(x)-f_{B_2}|\,dx 
\\
 \le
 2 \frac{|B_2|}{|B_1|} \Phi^{-1}(\vp(r_2)) \|f\|_{\cLPp}.
\qedhere
\end{multline*}
\end{proof}

%==================
\begin{lem}\label{lem:fBr-fBs}
%==================
Let $\Phi\in\biPy$ and $\vp:(0,\infty)\to(0,\infty)$.
Assume that $\vp$ satisfies the doubling condition.
Then there exists a positive constant $C$ such that,
for any $f\in\cLPp(\R^n)$ and 
for any two balls $B(a,r)$ and $B(b,s)$ satisfying $B(a,r)\subset B(b,s)$,
\begin{equation}\label{fBr-fBs}
 |f_{B(a,r)}-f_{B(b,s)}|
 \le
 C \int_r^{2s}\frac{\Phi^{-1}(\vp(t))}{t}\,dt \,\|f\|_{\cLPp}.
\end{equation}
\end{lem}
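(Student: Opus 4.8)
The plan is to prove Lemma~\ref{lem:fBr-fBs} by a standard dyadic chaining argument, interpolating between the small ball $B(a,r)$ and the large ball $B(b,s)$ through a sequence of nested balls with geometrically increasing radii, and applying Lemma~\ref{lem:fB1-fB2} to each consecutive pair. First I would construct the chain: set $r_k = 2^k r$ for $k=0,1,\dots,N$, where $N$ is the least integer with $2^N r \ge 2s$ (so $N \sim \log_2(s/r)$), and consider the balls $B(a,r_k)$. These are nested, $B(a,r_0)=B(a,r)\subset B(a,r_1)\subset\cdots$, and since $B(a,r)\subset B(b,s)$ we have $|a-b|<s$, so for $k=N$ one checks $B(b,s)\subset B(a, r_N)$ (because $r_N \ge 2s > s + |a-b|$). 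This gives a telescoping estimate
\begin{equation*}
 |f_{B(a,r)} - f_{B(b,s)}|
 \le
 \sum_{k=0}^{N-1} |f_{B(a,r_k)} - f_{B(a,r_{k+1})}| + |f_{B(a,r_N)} - f_{B(b,s)}|.
\end{equation*}

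Next I would bound each term. For consecutive balls $B(a,r_k)\subset B(a,r_{k+1})$, Lemma~\ref{lem:fB1-fB2} gives $|f_{B(a,r_k)} - f_{B(a,r_{k+1})}| \le 2\cdot 2^n\,\Phi^{-1}(\vp(r_{k+1}))\,\|f\|_{\cLPp}$, since the measure ratio is exactly $2^n$. For the last term, $B(b,s)\subset B(a,r_N)$ with $|B(a,r_N)|/|B(b,s)| = (r_N/s)^n \le (4s/s)^n = 4^n$ is bounded, so again Lemma~\ref{lem:fB1-fB2} yields $|f_{B(a,r_N)} - f_{B(b,s)}|\le 2\cdot 4^n\,\Phi^{-1}(\vp(s))\,\|f\|_{\cLPp}$. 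Summing, the total is $\ls \|f\|_{\cLPp}\sum_{k=1}^{N}\Phi^{-1}(\vp(r_k))$ (absorbing the last term into the sum since $\vp(s)\sim\vp(r_N)$ by the doubling condition).

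Finally I would convert the sum $\sum_{k=1}^{N}\Phi^{-1}(\vp(r_k))$ into the integral $\int_r^{2s}\frac{\Phi^{-1}(\vp(t))}{t}\,dt$. On each interval $t\in[r_{k-1},r_k]=[2^{k-1}r, 2^k r]$, the doubling condition on $\vp$ gives $\vp(t)\sim\vp(r_k)$, hence $\Phi^{-1}(\vp(t))\sim\Phi^{-1}(\vp(r_k))$ (using the doubling property \eqref{Phi-1 doubl} of $\Phi^{-1}$), and $\int_{r_{k-1}}^{r_k}\frac{dt}{t}=\log 2$. Therefore $\Phi^{-1}(\vp(r_k)) \sim \int_{r_{k-1}}^{r_k}\frac{\Phi^{-1}(\vp(t))}{t}\,dt$, and summing over $k=1,\dots,N$ gives $\sum_{k=1}^{N}\Phi^{-1}(\vp(r_k)) \sim \int_r^{2^N r}\frac{\Phi^{-1}(\vp(t))}{t}\,dt$. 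Since $2s \le 2^N r < 4s$, the doubling of $\vp$ shows this integral is comparable to $\int_r^{2s}\frac{\Phi^{-1}(\vp(t))}{t}\,dt$ up to a bounded extra piece, which can be absorbed. This produces the claimed bound \eqref{fBr-fBs}.

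The argument is essentially routine; the only point requiring a little care is the geometric setup at the top end of the chain — verifying that $B(b,s)$ is indeed contained in $B(a,r_N)$ given only $B(a,r)\subset B(b,s)$ and the choice of $N$ — and making sure the measure ratios stay uniformly bounded there so that Lemma~\ref{lem:fB1-fB2} applies with a constant independent of $r$ and $s$. Everything else (the telescoping, the sum-to-integral comparison) follows from the doubling conditions already available. There is no deep obstacle here; the lemma is a technical tool for the later proofs of Theorems~\ref{thm:C sim M} and \ref{thm:Ms}.
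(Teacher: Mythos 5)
Your proof is correct and follows essentially the same route as the paper: a dyadic chain of nested balls with radii $2^k r$, the telescoping estimate, Lemma~\ref{lem:fB1-fB2} applied to each consecutive pair, and the sum-to-integral conversion via the doubling conditions on $\vp$ and $\Phi^{-1}$. The only cosmetic difference is that you fix all intermediate balls to be centered at $a$ and finish with $B(b,s)\subset B(a,2^N r)$, whereas the paper allows the centers to drift so that the penultimate ball is contained in $B(b,s)$; both are valid instantiations of the same chaining argument.
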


%=================================================
\begin{proof}
%=================================================
Let $f\in\cLPp(\R^n)$.
Take balls $B_j=B(a_j,2^jr)$, $j=0,1,2,\dots$, such that
\begin{equation*}
 B(a,r)=B_0\subset B_1\subset B_2 \subset\dots\subset B_{k-1}\subset B(b,s)\subset B_{k}.
\end{equation*}
Then, by \eqref{B1B2} and the doubling condition of $\Phi^{-1}(\vp(\cdot))$ we have
\begin{align*}
 |f_{B(a,r)} - & f_{B(b,s)}|
 \le
 |f_{B_0}-f_{B_1}|+|f_{B_1}-f_{B_2}|+\dots+|f_{B_{k-1}}-f_{B(b,s)}|
\\
 &\le
 2^{n+1} \sum_{j=1}^{k-1} \Phi^{-1}(\vp(2^jr)) \|f\|_{\cLPp}
 + 2\frac{|B(b,s)|}{|B_{k-1}|}\Phi^{-1}(\vp(s)) \|f\|_{\cLPp}
\\
 &\ls
 \sum_{j=1}^{k-1} \int_{2^{j-1}r}^{2^jr} \frac{\Phi^{-1}(\vp(t))}{t}dt \,\|f\|_{\cLPp} 
 + \int_s^{2s} \frac{\Phi^{-1}(\vp(t))}{t}dt \,\|f\|_{\cLPp}
\\
 &\le \int_{r}^{2s} \frac{\Phi^{-1}\left(\vp(t)\right)}{t}dt \,\|f\|_{\cLPp}.
\end{align*}
This shows the conclusion.
\end{proof}

%==================
\begin{lem}\label{lem:s(f)}
%==================
Let $\Phi\in\biPy$ and $\vp\in\cGdec$.
If $\int_1^{\infty}\frac{\Phi^{-1}(\vp(t))}{t}\,dt<\infty$,
then, for every $f\in\cLPp(\R^n)$,
there exists a constant $\sigma(f)$ such that $\sigma(f)=\dlim_{r\to\infty}f_{B(a,r)}$ 
for all $a\in\R^n$.
\end{lem}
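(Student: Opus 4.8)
The plan is to show that $\{f_{B(0,r)}\}_{r>0}$ is Cauchy as $r\to\infty$, deduce the existence of the limit $\sigma(f)$, and then check that the same limit is obtained along $B(a,r)$ for any $a$. First I would fix $f\in\cLPp(\R^n)$ and apply Lemma~\ref{lem:fBr-fBs} to the nested pair $B(0,r)\subset B(0,s)$ with $r<s$: this gives
\[
 |f_{B(0,r)}-f_{B(0,s)}|
 \le
 C\int_r^{2s}\frac{\Phi^{-1}(\vp(t))}{t}\,dt\,\|f\|_{\cLPp}.
\]
Since the hypothesis $\int_1^{\infty}\frac{\Phi^{-1}(\vp(t))}{t}\,dt<\infty$ forces the tail $\int_r^{\infty}\frac{\Phi^{-1}(\vp(t))}{t}\,dt\to0$ as $r\to\infty$, the right-hand side tends to $0$, so $\{f_{B(0,r)}\}$ is Cauchy; by completeness of $\R$ (or $\C$) there is a scalar $\sigma(f)=\dlim_{r\to\infty}f_{B(0,r)}$.

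Next I would upgrade this from $a=0$ to an arbitrary $a\in\R^n$. Given $a$, for $r$ large enough we have $B(a,r)\subset B(0,2r)$ (indeed once $r>|a|$), and both balls have comparable radii, so $\Phi^{-1}(\vp(\cdot))$ on the relevant scales is comparable via the doubling property of $\vp$ (which holds since $\vp\in\cGdec$). Applying Lemma~\ref{lem:fBr-fBs} to $B(a,r)\subset B(0,2r)$ yields
\[
 |f_{B(a,r)}-f_{B(0,2r)}|
 \le
 C\int_r^{4r}\frac{\Phi^{-1}(\vp(t))}{t}\,dt\,\|f\|_{\cLPp}
 \le
 C\int_r^{\infty}\frac{\Phi^{-1}(\vp(t))}{t}\,dt\,\|f\|_{\cLPp}\to0,
\]
so $f_{B(a,r)}$ and $f_{B(0,2r)}$ have the same limit, namely $\sigma(f)$. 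Hence $\dlim_{r\to\infty}f_{B(a,r)}=\sigma(f)$ for every $a$, and $\sigma(f)$ is well defined independently of the center.

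The only mild subtlety — and the point I would be careful about — is the requirement in Lemma~\ref{lem:fBr-fBs} that one ball actually contain the other: the lemma is stated for nested balls, so for off-center $a$ one must pass through an auxiliary concentric ball $B(0,2r)$ (or $B(0,r+|a|)$) rather than comparing $B(a,r)$ and $B(0,r)$ directly. Everything else is a routine tail estimate using the convergence of $\int_1^{\infty}\frac{\Phi^{-1}(\vp(t))}{t}\,dt$ together with the doubling of $\vp$ coming from $\vp\in\cGdec$; no further structure on $\Phi$ (beyond $\Phi\in\biPy$, used only so that $\Phi^{-1}$ and Lemma~\ref{lem:fBr-fBs} make sense) is needed.
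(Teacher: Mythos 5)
Your proposal is correct and follows essentially the same route as the paper: establish the Cauchy property of $\{f_{B(0,r)}\}$ via Lemma~\ref{lem:fBr-fBs} and the finiteness of the tail integral, then transfer the limit to $B(a,r)$ by comparing with a concentric ball $B(0,2r)$ for $r>|a|$. The only cosmetic difference is that for the second comparison the paper invokes the one-step estimate of Lemma~\ref{lem:fB1-fB2} (giving a bound $\sim\Phi^{-1}(\vp(2r))\to0$), whereas you re-invoke Lemma~\ref{lem:fBr-fBs} to get $\int_r^{4r}\Phi^{-1}(\vp(t))\,dt/t\to0$; both are valid and equivalent in effect.
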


%=================================================
\begin{proof}
%=================================================
Let $f\in\cLPp(\R^n)$. 
By \eqref{fBr-fBs} we see that
\begin{equation*}
 |f_{B(0,r)}-f_{B(0,s)}|
 \le
 C \int_r^{2s}\frac{\Phi^{-1}(\vp(t))}{t}\,dt \,\|f\|_{\cLPp}
 \to0 
 \quad\text{as $r,s\to \infty$ with $r<s$}.
\end{equation*}
Hence $f_{B(0,r)}$ converges as $r$ tends to infinity. 
Let $\sigma(f)=\dlim_{r\to\infty}f_{B(0,r)}$. 
If $|a|\le r$, then $B(a,r)\subset B(0,2r)$.
From \eqref{B1B2} it follows that
\begin{align*}
 |f_{B(a,r)}-\sigma(f)|
 &\le
 |f_{B(a,r)}-f_{B(0,2r)}|+|f_{B(0,2r)}-\sigma(f)|
\\
 &\le
 2^{n+1}\|f\|_{\cLPp}\Phi^{-1}(\vp(2r))+|f_{B(0,2r)}-\sigma(f)|
 \to0 \quad\text{as $r\to\infty$},
\end{align*}
since $\Phi^{-1}(\vp(2r))\to0$ as $r\to\infty$ by the assumption.
\end{proof}

%=================================================
\begin{proof}[\bf Proof of Theorem~\ref{thm:C sim M}]
%=================================================
We may assume that $\Phi\in\dtwo$.
Let $f\in\cLPp(\R^n)$.
Then by the definition of $\cLPp(\R^n)$,
for any ball $B=B(a,r)$,
\begin{equation*}
 \frac1{\vp(r)}
 \fint_{B}\Phi\left(\frac{|f(x)-f_{B}|}{\|f\|_{\cLPp}}\right)dx
 \le 1.
\end{equation*}
Letting $s\to\infty$ in \eqref{fBr-fBs} 
and using Lemma~\ref{lem:s(f)},
we have 
\begin{equation*}
 |f_{B} - \sigma(f)|
 \ls
 \int_{r}^{\infty} \frac{\Phi^{-1}(\vp(t))}{t}dt \,\|f\|_{\cLPp}.
\end{equation*}
By Lemma~\ref{lem:int Phi vp} we have
\begin{equation*}
 |f_{B} - \sigma(f)|
 \le
 C\Phi^{-1}(\vp(r))\,\|f\|_{\cLPp}
\end{equation*}
for some $C\ge1$ independent of $f$.
Then by \eqref{inverse ineq} we have
\begin{equation*}
 \Phi\left(\frac{|f_{B}-\sigma(f)|}{C\|f\|_{\cLPp}}\right)
 \le
 \Phi(\Phi^{-1}(\vp(r)))
 \le
 \vp(r).
\end{equation*}
By the convexity of $\Phi$ we have
\begin{align*}
 &\frac1{\vp(r)}
 \fint_{B}\Phi\left( \frac{|f(x)-\sigma(f)|}{2C \|f\|_{\cLPp}} \right) dx
\\
 &\le
 \frac1{\vp(r)}
 \fint_{B} \frac1{2}\left\{ \Phi\left(\frac{|f(x)-f_{B}|}{C\|f\|_{\cL^{(\Phi,\vp)}}}\right)
  + \Phi\left(\frac{|f_{B}-\sigma(f)|}{C\|f\|_{\cL^{(\Phi,\vp)}}}\right) \right\}\,dx
 \le 1.
\end{align*}
This means that $f-\sigma(f)\in\LPp(\R^n)$ and that
\begin{equation*}
 \|f-\sigma(f)\|_{\LPp} \le 2C\|f\|_{\cLPp}.
\end{equation*}

Conversely, let $f\in\LPp(\R^n)$.
Then by \eqref{fint_B f} we have, for any ball $B=B(a,r)$,
\begin{equation}\label{fB}
 |f_B|
 \le
 \fint_B |f(x)|\,dx 
 \le
  2\Phi^{-1}(\vp(r)) \|f\|_{\LPp}.
\end{equation}
Since $\Phi^{-1}(\vp(r))\to0$ as $r\to\infty$ by the assumption,
we conclude that $\sigma(f)=\dlim_{r\to\infty}f_{B(a,r)}=0$
by Lemma~\ref{lem:s(f)}.
Moreover, from \eqref{fB} and \eqref{chi norm B} it follows that
\begin{align*}
 \|f_B\|_{\Phi,\vp,B}
 &=
 |f_B| \|1\|_{\Phi,\vp,B}
 \le
 2\Phi^{-1}(\vp(r)) \|f\|_{\LPp} \frac1{\Phi^{-1}(\vp(r))}
 = 
 2\|f\|_{\LPp}. 
\end{align*}
Then
\begin{equation*}
 \|f-f_B\|_{\Phi,\vp,B}
 \le
 \|f\|_{\Phi,\vp,B} + \|f_B\|_{\Phi,\vp,B}
 \le
 3\|f\|_{\LPp}.
\end{equation*}
This shows that $f\in\cLPp(\R^n)$ and 
\begin{equation*}
% \|f-\sigma(f)\|_{\cLPp} = 
 \|f\|_{\cLPp}
 \le 3 \|f\|_{\LPp}
 = 3 \|f-\sigma(f)\|_{\LPp}.
\end{equation*}
The proof is complete.
\end{proof}

%---------------------------------------------------------------------
To prove Theorem~\ref{thm:Ms}
we define local versions of the dyadic maximal operator and the dyadic sharp maximal operator.
For any cube $Q\subset\R^n$ centered at $a\in\R^n$ and with side length $2r>0$,
we denote by $\cQd(Q)$ the set of all dyadic cubes with respect to $Q$, that is,
\begin{multline*}
 \cQd(Q) \\
 =
 \left\{Q_{j,k}=a+\prod_{i=1}^{n}[2^{-j}k_ir,2^{-j}(k_i+1)r): j\in\Z,\ k=(k_1,\cdots,k_n)\in\Z^n \right\}.
\end{multline*}
For any cube $Q\subset\R^n$,
let
\begin{align*}
  \Md_{Q} f(x)
  &=
  \sup_{R\in\cQd(Q),\,x\in R\subset Q}\ \fint_R|f(y)|\,dy, \\
  \Msd_{Q} f(x)
  &=
  \sup_{R\in\cQd(Q),\,x\in R\subset Q}\ \fint_R|f(y)-f_R|\,dy.
\end{align*}
Then we have the following lemma.
%%%===============================================
\begin{lem}\label{lem:Md<Mds}
%%%===============================================
Let $\Phi\in\Delta_2$ and 
$\Phi(2t)\le C_{\Phi}\Phi(t)$ for all $t\in[0,\infty]$ and some $C_{\Phi}\ge1$.
Then there exists a positive constant $C_{n,\Phi}$ such that, 
for any $f\in L^1_{\loc}(\R^n)$
and any cube $Q$,
\begin{equation}\label{MdQ f<MsdQ f}
 \int_Q \Phi\left( \Md_{Q} f(x) \right) dx
 \le
 C_{n,\Phi} \int_Q \Phi\left( \Msd_Q f(x) \right) dx
 + 2C_{\Phi}\Phi\left(|f|_Q\right)|Q|,
\end{equation}
and 
\begin{equation}\label{f-fQ<MsdQ f}
 \int_Q \Phi\left( \Md_Q(f(x)-f_Q) \right) dx
 \le
 (C_{n,\Phi}+2C_{\Phi}) \int_Q \Phi\left( \Msd_Q f(x) \right) dx.
\end{equation}
\end{lem}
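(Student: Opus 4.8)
The plan is to prove the pointwise-in-cube modular inequality \eqref{MdQ f<MsdQ f} first, and then deduce \eqref{f-fQ<MsdQ f} from it by a simple substitution. The key tool for \eqref{MdQ f<MsdQ f} is the classical Calder\'on--Zygmund / good-$\lambda$ decomposition of $\Md_Q f$ at dyadic levels adapted to the fixed cube $Q$, exploiting that the dyadic maximal function has the nice stopping-time structure: for each height $\lambda>|f|_Q$ the set $\{\Md_Q f>\lambda\}\cap Q$ is a disjoint union of maximal dyadic subcubes $\{Q_j\}$ on each of which $\lambda<|f|_{Q_j}\le 2^n\lambda$ (the doubling of the parent). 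On each such $Q_j$ one has $|f_{Q_j}-f_Q|$ controlled, and more importantly $\Md_Q f\le 2^n\lambda$ on $Q_j$ while $\Msd_Q f$ is large on a definite portion of $Q_j$; this is the engine for trading $\Md_Q$ against $\Msd_Q$.

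Concretely, first I would reduce to the case $\int_Q\Phi(\Msd_Q f)<\infty$ (otherwise nothing to prove) and may assume $f\ge 0$. Then I would run the dyadic decomposition at the single threshold $\lambda_0=2^n|f|_Q$ (or more carefully at a geometric sequence $\lambda_k=2^{nk}|f|_Q$), obtaining at each level the maximal cubes $\{Q_j^{(k)}\}$. The standard good-$\lambda$ estimate here is: for a suitable small $\gamma>0$ (depending only on $n$),
\begin{equation*}
 \bigl|\{x\in Q_j^{(k)}: \Md_Q f(x)>2^n\lambda_k,\ \Msd_Q f(x)\le \gamma\lambda_k\}\bigr|
 \le \tfrac12 |Q_j^{(k)}|,
\end{equation*}
which follows because on $Q_j^{(k)}$ we have $|f-f_{Q_j^{(k)}}|$ with mean at most $\lambda_k\cdot(\text{something})$ forcing $\Md_Q f$ past $2^n\lambda_k$ to come from a set where oscillation is genuinely present. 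Summing over $j$ and using $\Phi\in\Delta_2$ (so $\Phi(2^n\lambda)\le C_{n,\Phi}'\Phi(\lambda)$ by iterating $\Phi(2t)\le C_\Phi\Phi(t)$) I would convert the level-set bound into the integral bound, distributing $\Phi$ via the layer-cake formula and absorbing the geometric factors; the boundary term $2C_\Phi\Phi(|f|_Q)|Q|$ accounts for the lowest level $\{\Md_Q f\le 2^n|f|_Q\}$, where we simply bound $\Phi(\Md_Q f)\le \Phi(2^n|f|_Q)\le C_\Phi^n\Phi(|f|_Q)$ on a set of measure $\le|Q|$ (the stated constant $2C_\Phi$ suggests the authors bound this more crudely, which is fine).

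For \eqref{f-fQ<MsdQ f}, apply \eqref{MdQ f<MsdQ f} to the function $g=f-f_Q$ in place of $f$. Observe that $\Msd_Q g=\Msd_Q f$ pointwise, since subtracting a constant does not change any local oscillation $\fint_R|g-g_R|=\fint_R|f-f_R|$, and that $|g|_Q=\fint_Q|f-f_Q|\le \Msd_Q f(x)$ for every $x\in Q$ (take $R=Q$ in the defining supremum), so $\Phi(|g|_Q)|Q|\le \int_Q\Phi(\Msd_Q f)$. Substituting these into \eqref{MdQ f<MsdQ f} collapses the two terms on the right into $(C_{n,\Phi}+2C_\Phi)\int_Q\Phi(\Msd_Q f)$, which is exactly \eqref{f-fQ<MsdQ f}. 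The main obstacle is getting the good-$\lambda$ constant $\gamma$ and the bookkeeping of the dyadic levels right so that after applying $\Delta_2$ the geometric series in $k$ converges with a constant depending only on $n$ and $C_\Phi$; the rest is routine measure-theoretic summation, and the passage to \eqref{f-fQ<MsdQ f} is essentially free.
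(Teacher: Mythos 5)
Your plan follows essentially the same good-$\lambda$ route as the paper. The paper's key tool is a pre-established local dyadic good-$\lambda$ inequality for $\Md_Q$ versus $\Msd_Q$ (its Lemma~\ref{lem:good lambda}, due to Tsutsui and Komori--Furuya); from that it derives \eqref{MdQ f<MsdQ f} by the continuous layer-cake formula $\int_0^N\Phi'(\lambda)|\{\Md_Q f>\lambda\}|\,d\lambda$, a change of variable $\lambda\mapsto2\lambda$, and a self-absorption step (hence the constant $2C_\Phi$), then passes to $N\to\infty$. You instead propose a discrete dyadic-level decomposition at $\lambda_k=2^{nk}|f|_Q$ and a geometric-series summation; that is a standard and workable variant of the same argument, and your passage from \eqref{MdQ f<MsdQ f} to \eqref{f-fQ<MsdQ f} (substitute $g=f-f_Q$, note $\Msd_Qg=\Msd_Qf$ and $|g|_Q=\fint_Q|f-f_Q|\le\Msd_Qf(x)$ for every $x\in Q$) is exactly what the paper does.

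One expository slip worth flagging: you write that on a maximal CZ cube $Q_j$ at height $\lambda$ one has ``$\Md_Q f\le 2^n\lambda$ on $Q_j$.'' That is not true --- $\Md_Q f$ can be arbitrarily large on $Q_j$ (indeed that is the whole point). The correct fact used in the good-$\lambda$ argument is the stopping-time bound $\lambda<|f|_{Q_j}\le 2^n\lambda$ together with the localization $\Md_Qf=\Md_{Q_j}f$ on $Q_j$; the smallness of the bad set then comes from the weak-$(1,1)$ bound for $\Md_{Q_j}$ applied to $f-f_{Q_j}$, not from a pointwise upper bound on $\Md_Qf$. Since the good-$\lambda$ estimate you actually state is the correct one, this misstatement does not break the plan, but the justification sentence would need to be repaired. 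Also note that the paper simply cites this good-$\lambda$ inequality as a known lemma, which keeps the proof short; rederiving it, as you sketch, is more work and is where the genuine technical content (choice of $\gamma$, weak-$(1,1)$, convergence of the geometric series against the $\Delta_2$ doubling constant) lives.
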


To prove Lemma~\ref{lem:Md<Mds} we use the following local version good $\lambda$ inequality:
%%%=================================================================================
\begin{lem}[Tsutsui~\cite{Tsutsui2011}, Komori-Furuya~\cite{Komori2015}]\label{lem:good lambda}
%%%=================================================================================
Let $f\in L^1_{\loc}(\R^n)$.
Then, for any cube $Q$, $0<\gamma\le1$ and $\lambda>|f|_Q$, we have
\begin{multline}\label{good lambda}
 \left|\left\{x\in Q:\, \Md_Q f(x)>2\lambda,\Msd_Q f(x)\le\gamma\lambda\right\}\right| \\
 \le
 2^n\gamma 
 \left|\left\{x\in Q:\, \Md_Q f(x)>\lambda\right\}\right|.
\end{multline}
\end{lem}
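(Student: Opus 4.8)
The plan is to adapt the classical Fefferman--Stein good-$\lambda$ argument to the cube $Q$, using a Calder\'on--Zygmund stopping-time decomposition of $f$ on $Q$ at height $\lambda$. Since $\lambda>|f|_Q$, one selects the family $\{Q_j\}\subset\cQd(Q)$ of those dyadic subcubes of $Q$ that are maximal with respect to $\fint_{Q_j}|f|>\lambda$. These cubes are pairwise disjoint, none of them equals $Q$ (because $\fint_Q|f|=|f|_Q\le\lambda$), and by maximality the dyadic parent $\hat Q_j$ of $Q_j$ lies in $\cQd(Q)$, satisfies $\hat Q_j\subseteq Q$ and $\fint_{\hat Q_j}|f|\le\lambda$, so in particular $|f_{\hat Q_j}|\le\lambda$. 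A routine stopping-time argument shows $\{x\in Q:\Md_Q f(x)>\lambda\}=\bigcup_j Q_j$. Because the set in \eqref{good lambda} is contained in $\{x\in Q:\Md_Q f(x)>2\lambda\}\subset\bigcup_j Q_j$, it suffices to prove, for each fixed $j$,
\[
 \bigl|\{x\in Q_j:\ \Md_Q f(x)>2\lambda,\ \Msd_Q f(x)\le\gamma\lambda\}\bigr|\le 2^n\gamma\,|Q_j|,
\]
and then sum over the disjoint cubes $Q_j$; this sum reproduces the right-hand side of \eqref{good lambda}.

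To prove the per-cube bound, fix $j$. If no point of $Q_j$ satisfies $\Msd_Q f\le\gamma\lambda$ the set is empty, so assume there is $x_0\in Q_j$ with $\Msd_Q f(x_0)\le\gamma\lambda$. Inserting the competitor $\hat Q_j$ (which belongs to $\cQd(Q)$, contains $x_0$, and lies in $Q$) into the supremum defining $\Msd_Q f(x_0)$ gives $\fint_{\hat Q_j}|f-f_{\hat Q_j}|\le\gamma\lambda$, hence $\int_{Q_j}|f-f_{\hat Q_j}|\le\int_{\hat Q_j}|f-f_{\hat Q_j}|\le 2^n\gamma\lambda\,|Q_j|$. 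For the maximal part, note that for $x\in Q_j$ any dyadic $R\in\cQd(Q)$ with $x\in R\subseteq Q$ is, by nestedness of dyadic cubes through the common point $x$, either contained in $Q_j$ or a proper dyadic ancestor of $Q_j$; in the latter case $\fint_R|f|\le\lambda$ by maximality of $Q_j$. Thus $\Md_Q f(x)\le\max\{\Md_{Q_j}f(x),\lambda\}$, so $\Md_Q f(x)>2\lambda$ forces some dyadic $R$ with $x\in R\subseteq Q_j$ and $\fint_R|f|>2\lambda$, whence $\fint_R|f-f_{\hat Q_j}|\ge\fint_R|f|-|f_{\hat Q_j}|>2\lambda-\lambda=\lambda$. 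Since $R\subseteq Q_j$, this means $\Md_{Q_j}\bigl((f-f_{\hat Q_j})\chi_{Q_j}\bigr)(x)>\lambda$. Applying the weak type $(1,1)$ bound (with constant one) for the dyadic maximal operator on $Q_j$ and the estimate above,
\[
 \bigl|\{x\in Q_j:\Md_Q f(x)>2\lambda\}\bigr|
 \le\frac1\lambda\int_{Q_j}|f-f_{\hat Q_j}|
 \le 2^n\gamma\,|Q_j|,
\]
which gives the per-cube bound, and summing over $j$ finishes the proof.

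There is no serious analytic obstacle here; the argument is the standard dyadic good-$\lambda$ inequality relative to the background cube $Q$. The points needing care are purely organizational: checking that the parent $\hat Q_j$ and every competitor cube used really belong to $\cQd(Q)$ and sit inside $Q$, so that the suprema defining $\Md_Q$ and $\Msd_Q$ legitimately see them; and observing that the hypothesis $\lambda>|f|_Q$ is precisely what makes the decomposition nontrivial and guarantees $\hat Q_j\subseteq Q$ with $\fint_{\hat Q_j}|f|\le\lambda$. The constant $2^n$ comes only from passing from $Q_j$ to its dyadic parent.
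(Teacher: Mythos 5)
Your argument is the standard dyadic good-$\lambda$ proof (note the paper itself does not prove this lemma but imports it from Tsutsui and Komori--Furuya), and most of it is sound: the stopping-time selection, the identity $\{x\in Q:\Md_Q f(x)>\lambda\}=\bigcup_j Q_j$, and the weak $(1,1)$ step on each $Q_j$. The gap is the sentence asserting that the dyadic parent $\hat Q_j$ of \emph{every} selected cube lies in $\cQd(Q)$ and is contained in $Q$. With the definition used here, $\cQd(Q)$ is the lattice anchored at the center $a$ with side lengths $2^{-j}r$, so the largest admissible cubes are the $2^n$ ``quadrants'' of $Q$ of side length $r$; the lattice parent of such a quadrant is a cube of side $2r$ that sticks out of $Q$, and $Q$ itself is not a member of $\cQd(Q)$ at all. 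A selected $Q_j$ can perfectly well be such a quadrant (the hypothesis $\lambda>|f|_Q$ only forces $\fint_Q|f|\le\lambda$, not $\fint_{Q_j}|f|\le\lambda$), and then neither of the two facts you extract from $\hat Q_j$ is justified: you cannot get $|f_{\hat Q_j}|\le\lambda$ from maximality, and you cannot insert $\hat Q_j$ into the supremum defining $\Msd_Q f(x_0)$, since $\hat Q_j\not\subset Q$.

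This is not a cosmetic point: if one insists on the family $\cQd(Q)$ exactly as written (so that $Q$ itself is never a competitor), the stated inequality is in fact false. Take $n\ge2$, let $Q_1$ be one quadrant of $Q$ and $f=c\chi_{Q_1}$. For $x\in Q_1$ every admissible cube containing $x$ lies inside $Q_1$, where $f$ is constant, so $\Md_Q f\equiv c$ and $\Msd_Q f\equiv 0$ on $Q_1$, while $\Md_Q f\equiv 0$ off $Q_1$; choosing $c/2^n<\lambda<c/2$ and $\gamma<2^{-n}$ makes the left-hand side of \eqref{good lambda} equal to $|Q_1|$ and the right-hand side equal to $2^n\gamma|Q_1|<|Q_1|$. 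The lemma is true in the form used in the cited papers, where the admissible cubes are $Q$ together with its successive bisections, and with that convention your proof closes: when the selected cube is a quadrant you take $\hat Q_j=Q$, the hypothesis $\lambda>|f|_Q$ (rather than maximality) gives $|f_Q|\le\lambda$, and $\fint_Q|f-f_Q|\le\Msd_Q f(x_0)\le\gamma\lambda$ because $Q$ is now a legitimate competitor; for all other selected cubes your argument is exactly right. (A minor further remark: the operator you call $\Md_{Q_j}$ should be read as the maximal operator over all dyadic subcubes of $Q_j$ in the original grid, including $Q_j$ itself; the weak $(1,1)$ bound with constant one holds for that family as well.)
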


%=================================================
\begin{proof}[\bf Proof of Lemma~\ref{lem:Md<Mds}]
%=================================================
For $N>0$, let
\begin{equation*}
 I_N
 =
 \int_0^N \Phi^{'}(\lambda) \left| \left\{x\in Q:\Md_Qf(x) > \lambda \right\} \right| d\lambda.
\end{equation*}
If $N>2|f|_Q$, then
\begin{align*}
 I_N %= \int_0^N \Phi^{'}(\lambda) \left| \{\Md_Qf(x) > \lambda\} \right| d\lambda 
 &= 
 \int_0^{2|f|_Q} + \int_{2|f|_Q}^N \Phi^{'}(\lambda) \left| \{ x\in Q: \Md_Qf(x) > \lambda\} \right| d\lambda
\\
 &\le
 \Phi\left(2|f|_Q\right)|Q|
 +2\int_{|f|_Q}^{N/2} \Phi^{'}\left(2\lambda\right) \left| \{ x\in Q: \Md_Qf(x) > 2\lambda\} \right| d\lambda.
\end{align*}
By the doubling conditions of $\Phi$ and $\Phi'$
and the good-$\lambda$ inequality \eqref{good lambda}, we have
\begin{align*}
 I_N
 &\le
 2C_{\Phi'}
 \bigg(
  2^n\gamma
  \int_{|f|_Q}^{N/2} 
   \Phi^{'}(\lambda) 
    \left|\{ x\in Q: \Md_Qf(x) > \lambda \}\right| 
  d\lambda 
\\
 &\phantom{*****}
 + \int_{|f|_Q}^{N/2} 
    \Phi^{'}(\lambda) 
     \left| \{ x\in Q: \Msd_Qf(x) > \gamma\lambda \} \right|
   d\lambda
 \bigg)
 + C_{\Phi}\Phi\left(|f|_Q\right)|Q| 
\\
 &\le
 2^{n+1}\gamma C_{\Phi'}I_N 
 + \frac{2C_{\Phi'}}{\gamma} \int_0^{N\gamma/2} \Phi^{'} \left( \frac{\lambda}{\gamma} \right)
   \left| \{ x\in Q: \Msd_Q f(x) > \lambda \} \right| d\lambda
\\
 &\phantom{*********************************}
 + C_{\Phi}\Phi\left(|f|_Q\right)|Q|.
\end{align*}
At this point we pick a $\gamma$ such that $2^{n+1}\gamma C_{\Phi'}=1/2$, then
\begin{align*}
 I_N
 \le
 C_{n,\Phi} 
 \int_0^{\infty} \Phi^{'} \left( \lambda \right) \left| \{ x\in Q: \Msd_Q f(x) > \lambda \} \right| d\lambda
 + 2C_{\Phi}\Phi\left(|f|_Q\right)|Q|. 
\end{align*}
Letting $N\to\infty$, we deduce \eqref{MdQ f<MsdQ f}.
Next, substitute $f-f_Q$ for $f$ in \eqref{MdQ f<MsdQ f}.
Then
\begin{align*}
 &\int_Q \Phi\left( \Md_Q(f(x)-f_Q) \right) dx
\\
 &\le
 C_{n,\Phi} \int_Q \Phi\left( \Msd_{Q} f(x) \right) dx
 + 2C_{\Phi}\Phi\left(|f-f_Q|_Q\right)|Q|
\\
 &\le
 C_{n,\Phi} \int_Q \Phi\left( \Msd_{Q} f(x) \right) dx
 + 2C_{\Phi}\Phi\left(\min_{x\in Q}\Msd_Qf(x)\right)|Q|
\\
 &\le
 (C_{n,\Phi}+2C_{\Phi}) \int_Q \Phi\left( \Msd_{Q} f(x) \right) dx,
\end{align*}
which is \eqref{f-fQ<MsdQ f}.
\end{proof}

%=================================================
\begin{proof}[\bf Proof of Theorem~\ref{thm:Ms}]
%=================================================
To prove \eqref{Cam<MsMor} we may assume that $\|\Ms f\|_{\LPp}=1$.
Then it is enough to prove that
there exists a positive constant $C'$ such that, 
for all balls $B=B(a,r)$, 
\begin{equation}\label{C'}
 \frac1{|B|\vp(r)}\int_B \Phi\left( \frac{ |f(x)-f_B| }{C'} \right) dx
 \le 1.
\end{equation}

Take the cube $Q$ such that $B\subset Q\subset\sqrt{n}B$.
By Jensen's inequality we have
\begin{equation*}
 \Phi\left(|f_Q-f_B|\right) 
 \le
 \Phi\left( \fint_B |f(x)-f_Q |dx \right) 
 \le
 \fint_B \Phi\left( |f(x)-f_Q| \right)dx.
\end{equation*}
Then
\begin{multline}\label{f-fB/2}
 \int_B \Phi\left( \frac{|f(x)-f_B|}{2} \right) dx
 \le
 \frac12\int_B \bigg( \Phi\left(|f(x)-f_Q|\right) + \Phi\left(|f_B-f_Q|\right) \bigg)dx
\\
 \le
 \int_B \Phi\left(|f(x)-f_Q|\right) dx
 \le
 \int_Q \Phi\left(|f(x)-f_Q|\right) dx. % \notag
\end{multline}
By \eqref{f-fQ<MsdQ f} and the fact that $\Msd_Q f\le C_n\Ms f$ for some positive constant $C_n$,
we have
\begin{multline}\label{f-fQ}
 \int_Q \Phi\left(|f(x)-f_Q|\right) dx
 \le
 \int_Q \Phi\left( \Md_Q(f(x)-f_Q) \right) dx
\\
 \le
 (C_{n,\Phi}+2C_{\Phi}) \int_Q \Phi\left( \Msd_Q f(x) \right) dx  %\notag
\\
 \le
 (C_{n,\Phi}+2C_{\Phi})\int_{\sqrt{n}B} \Phi\left( C_n\Ms f(x) \right) dx.  %\notag
\end{multline}
Take $C_{n,\vp}\ge1$ such that $|\sqrt{n}B|\vp(\sqrt{n}r)\le C_{n,\vp}|B|\vp(r)$.
Then, from \eqref{f-fB/2} and \eqref{f-fQ} it follows that
\begin{equation*}
 \frac1{|B|\vp(r)}\int_B \Phi\left( \frac{|f(x)-f_B|}{2} \right) dx
 \le
 \frac{C_{n,\vp}(C_{n,\Phi}+2C_{\Phi})}{|\sqrt{n}B|\vp(\sqrt{n}r)}
 \int_{\sqrt{n}B} \Phi\left( C_n\Ms f(x) \right) dx,
\end{equation*}
which show that
\begin{multline*}
 \frac1{|B|\vp(r)}\int_B \Phi\left( \frac{|f(x)-f_B|}{2C_{n,\vp}(C_{n,\Phi}+2C_{\Phi})C_n} \right) dx
\\
 \le
 \frac{1}{|\sqrt{n}B|\vp(\sqrt{n}r)}
 \int_{\sqrt{n}B} \Phi\left(\Ms f(x) \right) dx
 \le 1.
\end{multline*}
Therefore we have \eqref{C'}.

Next, we add the assumptions that $\Phi\in\bdtwo$ and that $\vp$ satisfies \eqref{int vp}.
Then
the Hardy-Littlewood maximal operator $M$ is bounded on $\LPp(\R^n)$
and then
\begin{equation}\label{Ms f< f}
 \|\Ms f\|_{\LPp}
 \le
 2 \|Mf\|_{\LPp}
 \le
 C \|f\|_{\LPp}.
\end{equation}
To prove the second inequality in \eqref{Cam sim MsMor}
we may assume that $f\in\cLPp(\R^n)$.
By Theorem~\ref{thm:C sim M}
we see that $f_{B(0,r)}$ converges as $r\to\infty$.
Setting $\sigma(f)=\dlim_{r\to\infty}f_{B(0,r)}$,
we have $\|f-\sigma(f)\|_{\LPp} \le C \|f\|_{\cLPp}$.
Substituting $f-\sigma(f)$ for $f$
in \eqref{Ms f< f},
we have
\begin{align*}
 \|\Ms f\|_{\LPp}
 \le
 C \|f-\sigma(f)\|_{\LPp}
 \le
 C \|f\|_{\cLPp},
\end{align*}
which shows the conclusion.
\end{proof}

%%%-----------------------------------------------

%%%===================================================================
%%%===================================================================
\section{Well definedness of the commutators}\label{sec:well}
%%%===================================================================
%%%===================================================================

In this section we prove that
the commutators $[b,T]f$ and $[b,\Ir]f$ is well defined 
for all $b\in\cL_{1,\psi}(\R^n)$ and $f\in\LPp(\R^n)$.

The following theorem is well known.
For the proof, see \cite{Arai-Nakai2018RMC,Nakai2008AMS} for example.

%%%===============================================
\begin{thm}\label{thm:J-N}
%%%===============================================
Let $\psi\in\cGinc$. 
Then, for each $p\in(1,\infty)$, 
$\cL_{p,\psi}(\R^n)=\cL_{1,\psi}(\R^n)$
with equivalent norms.
\end{thm}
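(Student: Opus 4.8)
\textbf{Proof proposal for Theorem~\ref{thm:J-N}.}

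The plan is to prove the equivalence $\cL_{p,\psi}(\R^n)=\cL_{1,\psi}(\R^n)$ via a John--Nirenberg type argument adapted to the growth function $\psi\in\cGinc$. One inclusion is trivial: by Jensen's inequality (or H\"older's inequality on the normalized measure $\fint_B$), for $1\le p<\infty$ we have $\fint_B|f-f_B|\,dy\le(\fint_B|f-f_B|^p\,dy)^{1/p}$, so $\|f\|_{\cL_{1,\psi}}\le\|f\|_{\cL_{p,\psi}}$ with constant $1$. The content is the reverse inequality $\|f\|_{\cL_{p,\psi}}\ls\|f\|_{\cL_{1,\psi}}$.

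For the reverse direction I would establish a John--Nirenberg type exponential decay estimate: there exist constants $C_1,C_2>0$, depending only on $n$, such that for every ball $B=B(a,r)$ and every $\lambda>0$,
\begin{equation*}
 |\{x\in B:|f(x)-f_B|>\lambda\}|
 \le
 C_1\,|B|\,\exp\!\left(-\frac{C_2\,\lambda}{\psi(r)\,\|f\|_{\cL_{1,\psi}}}\right).
\end{equation*}
The standard route is a Calder\'on--Zygmund stopping-time decomposition of $f-f_B$ on $B$ at height $\lambda$ (using the local dyadic structure of $B$, as already introduced in Section~\ref{sec:OMOC} via $\cQd(Q)$), iterated over generations. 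The key point where $\psi\in\cGinc$ enters is the control of the oscillation as one passes from a ball to a larger one: if $B(a,r)\subset B(b,s)$ then $|f_{B(a,r)}-f_{B(b,s)}|$ is controlled by a geometric sum of terms $\psi(2^j r)$, and since $\psi$ is almost increasing with $\psi(t)/t$ almost decreasing, this sum telescopes and is dominated by $C\psi(s)\|f\|_{\cL_{1,\psi}}$ (this is the analogue of Lemma~\ref{lem:fBr-fBs}, and in fact one can quote the doubling property of $\psi$ that follows from $\psi\in\cGinc$). Thus the ``defect'' in comparing mean values across scales stays within the allowed growth $\psi$, which is exactly what makes the John--Nirenberg iteration close with a bound proportional to $\psi(r)\|f\|_{\cL_{1,\psi}}$ rather than degrading. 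Once the exponential estimate is in hand, one integrates in $\lambda$:
\begin{equation*}
 \fint_B|f-f_B|^p\,dy
 =\frac{p}{|B|}\int_0^\infty\lambda^{p-1}|\{x\in B:|f-f_B|>\lambda\}|\,d\lambda
 \le C\bigl(\psi(r)\|f\|_{\cL_{1,\psi}}\bigr)^p,
\end{equation*}
and taking the $p$-th root and the supremum over $B$ gives $\|f\|_{\cL_{p,\psi}}\le C\|f\|_{\cL_{1,\psi}}$.

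The main obstacle is verifying that the stopping-time/iteration argument genuinely produces the factor $\psi(r)$ uniformly in $r$: one must check that at each generation of the decomposition the ``good'' part (the mean value over a stopping cube) differs from $f_B$ by at most a controlled multiple of $\psi(r)\|f\|_{\cL_{1,\psi}}$, which requires the across-scales mean-value comparison described above together with the fact that $\psi\in\cGinc$ satisfies the doubling condition \eqref{doubling}. A cleaner alternative, which I would actually pursue to keep the paper self-contained, is simply to cite the references already mentioned in the excerpt: the statement is recorded (with proof) in \cite{Nakai2008AMS} and \cite{Arai-Nakai2018RMC}, so one may legitimately write ``see \cite{Arai-Nakai2018RMC,Nakai2008AMS}'' and omit the details. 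If a proof is desired, the John--Nirenberg scheme above is the route.
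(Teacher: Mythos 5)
Your proposal is correct, and in fact it anticipates exactly what the paper does: the paper gives no proof of Theorem~\ref{thm:J-N} and simply writes ``for the proof, see \cite{Arai-Nakai2018RMC,Nakai2008AMS},'' which is precisely the ``cleaner alternative'' you offer at the end. Your John--Nirenberg sketch is also a sound account of what lies behind those references: the easy inclusion by Jensen is right, and the point you isolate --- that the across-scale mean-value shift costs only $\ls\psi(r)\|f\|_{\cL_{1,\psi}}$ because $\psi\in\cGinc$ is doubling and almost increasing, so the Calder\'on--Zygmund stopping-time iteration closes with a bound proportional to $\psi(r)$ uniformly in the ball --- is the key step, and integrating the resulting exponential (or even a weaker polynomial) decay in $\lambda$ yields $\|f\|_{\cL_{p,\psi}}\ls\|f\|_{\cL_{1,\psi}}$.
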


Using Theorem~\ref{thm:J-N},
we have the following lemma.

%%%===============================================
\begin{lem}[{\cite[Lemma~4.7]{Arai-Nakai2018RMC}}]\label{lem:int f-fB}
%%%===============================================
Let $p\in[1,\infty)$ and $\psi\in\cGinc$.
Then there exists a positive constant $C$ dependent only on $n$, $p$ and $\psi$ such that,
for all $f\in\cL_{1,\psi}(\R^n)$ and
for all $x\in\R^n$ and $r,s\in(0,\infty)$,
\begin{equation*}
 \left(\fint_{B(x,s)}|f(y)-f_{B(x,r)}|^p\,dy\right)^{1/p}
 \le
 C\int_r^{s}\frac{\psi(t)}{t}\,dt\,\|f\|_{\cL_{1,\psi}},
 \quad\text{if} \ \ 2r<s. 
\end{equation*}
\end{lem}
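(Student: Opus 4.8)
The plan is to reduce the statement to the scalar telescoping estimate of Lemma~\ref{lem:fBr-fBs}, but now applied in the Campanato scale $\cL_{p,\psi}$ rather than the Orlicz-Campanato scale. First I would invoke Theorem~\ref{thm:J-N}: since $\psi\in\cGinc$, we have $\cL_{p,\psi}(\R^n)=\cL_{1,\psi}(\R^n)$ with equivalent norms, so it suffices to prove the inequality with $\|f\|_{\cL_{p,\psi}}$ on the right-hand side (absorbing the comparison constant into $C$). Fix $x\in\R^n$ and $r,s$ with $2r<s$, write $B_r=B(x,r)$ and $B_s=B(x,s)$. Then I split
\begin{equation*}
 \left(\fint_{B_s}|f(y)-f_{B_r}|^p\,dy\right)^{1/p}
 \le
 \left(\fint_{B_s}|f(y)-f_{B_s}|^p\,dy\right)^{1/p}
 +|f_{B_s}-f_{B_r}|,
\end{equation*}
via Minkowski's inequality in $L^p(B_s,dy/|B_s|)$. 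The first term is at most $\psi(s)\|f\|_{\cL_{p,\psi}}$ by the definition of the Campanato norm, and since $\psi\in\cGinc$ is almost increasing, $\psi(s)\ls\int_r^s\frac{\psi(t)}{t}\,dt$ (because $\int_r^s\frac{\psi(t)}{t}\,dt\ge\psi(r)\log(s/r)\gs\psi(r)$ when $s\ge 2r$, and $\psi(s)\sim\psi(r)$ is false in general, so more care is needed here — see below).

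For the second term, $|f_{B_r}-f_{B_s}|$, I would run the dyadic telescoping argument exactly as in Lemma~\ref{lem:fBr-fBs}: choose $B_j=B(x,2^jr)$ for $j=0,\dots,k$ with $B_r=B_0\subset\dots\subset B_{k-1}\subset B_s\subset B_k$, and estimate each jump $|f_{B_{j-1}}-f_{B_j}|$ by $\fint_{B_{j-1}}|f-f_{B_j}|\le 2^n\fint_{B_j}|f-f_{B_j}|\le 2^n\psi(2^jr)\|f\|_{\cL_{p,\psi}}$ (using Jensen to pass from $L^p$ to $L^1$ means), and similarly the last jump $|f_{B_{k-1}}-f_{B_s}|$ with $|B_s|/|B_{k-1}|\le 2^n$. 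Summing and converting $\sum_j\psi(2^jr)$ into the integral $\int_r^{2^kr}\frac{\psi(t)}{t}\,dt$ via the doubling of $\psi$ (which holds since $\psi\in\cGinc$), and noting $2^kr\sim s$, gives $|f_{B_r}-f_{B_s}|\ls\int_r^{s}\frac{\psi(t)}{t}\,dt\,\|f\|_{\cL_{p,\psi}}$, up to adjusting the upper limit by a doubling constant.

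The main obstacle is the bookkeeping that makes the \emph{single} integral $\int_r^s\frac{\psi(t)}{t}\,dt$ dominate \emph{both} the telescoping sum and the leftover term $\psi(s)$: one must use that $\psi\in\cGinc$ forces $\psi(s)/s$ to be almost decreasing, hence $\psi(s)\ls s\,\psi(t)/t$ for $t\le s$, and integrating $dt/t$ over, say, $[s/2,s]$ yields $\psi(s)\ls\int_{s/2}^s\frac{\psi(t)}{t}\,dt\le\int_r^s\frac{\psi(t)}{t}\,dt$ since $s/2\ge r$. Once this comparison is in hand the rest is the routine chaining already carried out in Lemma~\ref{lem:fBr-fBs}, and the proof concludes by combining the two terms. (This lemma is quoted from \cite[Lemma~4.7]{Arai-Nakai2018RMC}, so in the paper one may simply cite it; the above is how the argument goes.)
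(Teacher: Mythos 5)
Your proof is correct. The paper itself does not prove this lemma---it is imported verbatim from \cite[Lemma~4.7]{Arai-Nakai2018RMC}---so there is no in-paper argument to compare against; but your reconstruction is the natural one, and it mirrors the paper's own proof of Lemma~\ref{lem:fBr-fBs} (the Orlicz--Campanato analogue): split via Minkowski into the oscillation on $B_s$ plus $|f_{B_r}-f_{B_s}|$, run concentric dyadic telescoping for the latter, convert the geometric sum into $\int_r^s\psi(t)\,dt/t$ via doubling, and invoke the John--Nirenberg identification $\cL_{p,\psi}=\cL_{1,\psi}$ (Theorem~\ref{thm:J-N}) to trade between $L^p$ and $L^1$ oscillations.

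The one place you stumble momentarily---and then repair correctly---is absorbing the leftover $\psi(s)$ into $\int_r^s\psi(t)\,dt/t$. The parenthetical first attempt bounding $\int_r^s\psi(t)\,dt/t\gs\psi(r)$ is indeed insufficient, since $\psi$ is \emph{almost increasing} so one gets the useless direction $\psi(r)\ls\psi(s)$; you flag this yourself. Your repair, using the almost decreasingness of $t\mapsto\psi(t)/t$ (the other half of $\psi\in\cGinc$) to get
\begin{equation*}
\psi(s)\ls\int_{s/2}^{s}\frac{\psi(t)}{t}\,dt\le\int_r^s\frac{\psi(t)}{t}\,dt
\quad(\text{since }2r<s),
\end{equation*}
is exactly what is needed. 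An equivalent and slightly more direct route is to note that $\psi\in\cGinc$ implies the doubling condition \eqref{doubling}, so $\psi(t)\sim\psi(s)$ on $[s/2,s]$ and hence $\int_{s/2}^{s}\psi(t)\,dt/t\gs\psi(s)\log 2$. Either way the argument closes, and the proof is sound.
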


%%%===============================================
\begin{rem}\label{rem:int f-fB}
%%%===============================================
In Lemma~\ref{lem:int f-fB} we also have
\begin{equation*}
 \left(\fint_{B(x,s)}|f(y)-f_{B(x,r)}|^p\,dy\right)^{1/p}
 \le
 C\left(\log_2\frac sr\right)\psi(s)\,\|f\|_{\cL_{1,\psi}},
 \quad\text{if} \ \ 2r<s, 
\end{equation*}
since 
\begin{equation*}
 \int_{2^jr}^{2^{j+1}r}\frac{\psi(t)}{t}\,dt\ls \psi(s),
\end{equation*}
for $j=0,1,\dots,[\log_2\frac sr]+1$.
\end{rem}

%-------------------------------------------------

%%%===============================================
\begin{lem}\label{lem:int2Bc T}
%%%===============================================
Let $\Phi\in\iPy$ and $\vp\in\cGdec$.
Let $K$ be a standard kernel satisfying \eqref{CZ1}. %of type $\omega\in\Omega$. 
Then there exists a positive constant $C$ such that,
for all $f\in \LPp(\R^n)$ and all balls $B=B(z,r)$,
\begin{equation*}
 \int_{\R^n\setminus2B}|K(x,y)f(y)|\,dy
 \le
 C\int_{2r}^{\infty}\frac{\Phi^{-1}(\vp(t))}{t}\,dt\,\|f\|_{\LPp},
 \quad x\in B.
\end{equation*}
\end{lem}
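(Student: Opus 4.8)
The plan is to decompose the complement of $2B$ into the dyadic annuli $A_j = B(z,2^{j+1}r)\setminus B(z,2^jr)$ for $j\ge1$, so that $\R^n\setminus 2B=\bigcup_{j\ge1}A_j$. On each $A_j$ one has $|x-y|\sim 2^jr$ for $x\in B$, so the size estimate \eqref{CZ1} gives $|K(x,y)|\le C/(2^jr)^n\sim C/|B(z,2^{j+1}r)|$. Hence
\begin{equation*}
 \int_{A_j}|K(x,y)f(y)|\,dy
 \le
 \frac{C}{|B(z,2^{j+1}r)|}\int_{B(z,2^{j+1}r)}|f(y)|\,dy
 =
 C\fint_{B(z,2^{j+1}r)}|f(y)|\,dy.
\end{equation*}
Now I would apply \eqref{fint_B f} from Lemma~\ref{lem:p} with the ball $B(z,2^{j+1}r)$ to bound this mean by $2\Phi^{-1}(\vp(2^{j+1}r))\|f\|_{\Phi,\vp,B(z,2^{j+1}r)}\le 2\Phi^{-1}(\vp(2^{j+1}r))\|f\|_{\LPp}$.

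Summing over $j\ge1$ then yields
\begin{equation*}
 \int_{\R^n\setminus2B}|K(x,y)f(y)|\,dy
 \le
 C\|f\|_{\LPp}\sum_{j=1}^{\infty}\Phi^{-1}(\vp(2^{j+1}r)).
\end{equation*}
To convert the series into the claimed integral, I would use that $\vp\in\cGdec$ implies $\Phi^{-1}(\vp(\cdot))$ satisfies the doubling condition (Remark~\ref{rem:D2 n2}~(iii) together with the doubling of $\vp$), so that for each $j$,
\begin{equation*}
 \Phi^{-1}(\vp(2^{j+1}r))
 \sim
 \frac{1}{\log 2}\int_{2^{j+1}r}^{2^{j+2}r}\frac{\Phi^{-1}(\vp(t))}{t}\,dt,
\end{equation*}
and the intervals $[2^{j+1}r,2^{j+2}r)$, $j\ge1$, tile $[4r,\infty)\subset[2r,\infty)$. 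This gives the bound $C\int_{2r}^{\infty}\frac{\Phi^{-1}(\vp(t))}{t}\,dt\,\|f\|_{\LPp}$, which is the conclusion.

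I do not expect any serious obstacle here; the only points requiring care are the comparison $|x-y|\sim2^jr$ on $A_j$ (valid because $x\in B=B(z,r)$ and $y$ is at distance between $2^jr$ and $2^{j+1}r$ from $z$, with $2^j\ge2$, so $|x-y|\ge 2^jr-r\ge 2^{j-1}r$ and $|x-y|\le 2^{j+1}r+r\le 2^{j+2}r$) and the passage from the discrete sum to the integral, which is routine once the doubling of $\Phi^{-1}\circ\vp$ is invoked. Implicitly this also shows the integral $\int_{2r}^\infty \Phi^{-1}(\vp(t))/t\,dt$ controls the tail; if it diverges the statement is vacuous, and no finiteness hypothesis on $\vp$ is needed for the inequality itself.
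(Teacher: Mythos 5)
Your proposal is correct and follows essentially the same route as the paper: decompose $\R^n\setminus 2B$ into dyadic annuli $2^{j+1}B\setminus 2^jB$, use the size bound \eqref{CZ1} together with $|x-y|\sim 2^jr$ to reduce to averages of $|f|$ over $2^{j+1}B$, invoke \eqref{fint_B f}, and convert the resulting series of values $\Phi^{-1}(\vp(2^{j+1}r))$ into the integral via the doubling of $\Phi^{-1}(\vp(\cdot))$. The only cosmetic difference is that the paper compares $\Phi^{-1}(\vp(2^{j+1}r))$ to $\int_{2^jr}^{2^{j+1}r}$ (tiling $[2r,\infty)$ exactly) whereas you use $\int_{2^{j+1}r}^{2^{j+2}r}$ (tiling $[4r,\infty)\subset[2r,\infty)$), which makes no difference.
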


\begin{proof}
If $x\in B$ and $y\not\in2B$, 
then $|z-y|/2 \le |x-y| \le 3|z-y|/2$.
From \eqref{CZ1} it follows that
$|K(x,y)|\ls|x-y|^{-n}\sim|z-y|^{-n}$.
Then 
\begin{equation*}
 \int_{\R^n\setminus2B}|K(x,y)||f(y)|\,dy 
 \ls
 \int_{\R^n\setminus2B}\frac{|f(y)|}{|z-y|^n}\,dy 
 =
 \sum_{j=1}^{\infty}
 \int_{2^{j+1}B\setminus2^{j}B}\frac{|f(y)|}{|z-y|^n}\,dy.
\end{equation*}
By \eqref{fint_B f}, H\"older's inequality and the doubling condition of $\vp$ we have
\begin{multline*}
 \int_{2^{j+1}B\setminus2^{j}B}\frac{|f(y)|}{|z-y|^n}\,dy 
 \ls
% \frac{1}{(2^{j+1}r)^n}
%  \int_{2^{j+1}B\setminus2^{j}B}|f(y)|\,dy \\
 \fint_{2^{j+1}B}|f(y)|\,dy 
 \ls
 \Phi^{-1} \left( \vp(2^{j+1}r) \right) \|f\|_{\LPp} \\
 \ls
 \int_{2^{j}r}^{2^{j+1}r}\frac{\Phi^{-1}(\vp(t))}{t}\,dt
 \,\|f\|_{\LPp}.
\end{multline*}
Therefore, we have the conclusion.
\end{proof}

%%%===============================================
\begin{lem}\label{lem:int2Bc psi T}
%%%===============================================
Let $\Phi\in\ntwo$, $\vp\in\cGdec$, $\psi\in\cGinc$ and $K$ be a standard kernel satisfying \eqref{CZ1}. %of type $\omega\in\Omega$. 
Then there exists a positive constant $C$ such that,
for all $b\in \cL_{1,\psi}(\R^n)$, all $f\in \LPp(\R^n)$ and all balls $B=B(z,r)$,
\begin{multline*}
 \int_{\R^n\setminus2B}|(b(y)-b_{B})K(x,y)f(y)|\,dy \\
 \le
 C\int_{r}^{\infty}\frac{\psi(z,t)}{t}
  \left(\int_{t}^{\infty}
  \frac{\Phi^{-1}(\vp(u))}{u}\,du\right)\!dt
  \,\|b\|_{\cL_{1,\psi}}\|f\|_{\LPp},
 \quad x\in B.
\end{multline*}
\end{lem}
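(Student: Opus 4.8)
The plan is to run a dyadic annular decomposition exactly as in the proof of Lemma~\ref{lem:int2Bc T}, but now carrying the extra factor $|b(y)-b_B|$. First I would fix $x\in B=B(z,r)$ and $y\notin 2B$, so that $|z-y|/2\le|x-y|\le 3|z-y|/2$ and hence, by \eqref{CZ1}, $|K(x,y)|\ls|z-y|^{-n}$. Writing $\R^n\setminus 2B=\bigcup_{j=1}^\infty(2^{j+1}B\setminus 2^jB)$, this reduces the integral to
\begin{equation*}
 \int_{\R^n\setminus2B}|(b(y)-b_B)K(x,y)f(y)|\,dy
 \ls
 \sum_{j=1}^{\infty}\frac1{|2^{j}B|}\int_{2^{j+1}B}|b(y)-b_B|\,|f(y)|\,dy.
\end{equation*}

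For each fixed $j$ I would split $b(y)-b_B=(b(y)-b_{2^{j+1}B})+(b_{2^{j+1}B}-b_B)$ and handle the two pieces separately. For the first piece, apply H\"older's inequality in the Orlicz space on $2^{j+1}B$: by the generalized H\"older inequality \eqref{g Holder B} together with Lemma~\ref{lem:p} (using $\Phi\in\ntwo$, so that $f$ lies in a slightly better $L^p$-space locally) and the John--Nirenberg-type estimate of Lemma~\ref{lem:int f-fB} (with its dual exponent applied to $b-b_{2^{j+1}B}$ via Theorem~\ref{thm:J-N}), one controls
\begin{equation*}
 \fint_{2^{j+1}B}|b(y)-b_{2^{j+1}B}|\,|f(y)|\,dy
 \ls
 \psi(z,2^{j+1}r)\,\Phi^{-1}(\vp(2^{j+1}r))\,\|b\|_{\cL_{1,\psi}}\|f\|_{\LPp}.
\end{equation*}
For the second piece, Lemma~\ref{lem:int f-fB} (or Remark~\ref{rem:int f-fB}) gives $|b_{2^{j+1}B}-b_B|\ls\sum_{i=0}^{j}\psi(z,2^{i}r)\ls\int_r^{2^{j+1}r}\frac{\psi(z,t)}{t}\,dt$, and then \eqref{fint_B f} bounds $\fint_{2^{j+1}B}|f|\ls\Phi^{-1}(\vp(2^{j+1}r))\|f\|_{\LPp}$, so this contribution is dominated by $\big(\int_r^{2^{j+1}r}\tfrac{\psi(z,t)}{t}dt\big)\Phi^{-1}(\vp(2^{j+1}r))\|b\|_{\cL_{1,\psi}}\|f\|_{\LPp}$, which already has the shape of the double-integral bound.

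Finally I would reassemble the sum over $j$. Using the doubling of $\vp$ and $\psi$ one converts each $\Phi^{-1}(\vp(2^{j+1}r))$ into $\int_{2^jr}^{2^{j+1}r}\frac{\Phi^{-1}(\vp(u))}{u}\,du$ and each $\psi(z,2^{j+1}r)$ into $\int_{2^jr}^{2^{j+1}r}\frac{\psi(z,t)}{t}\,dt$; telescoping the $j$-sum then produces the iterated integral $\int_r^\infty\frac{\psi(z,t)}{t}\big(\int_t^\infty\frac{\Phi^{-1}(\vp(u))}{u}du\big)dt$, noting that the ``inner annulus'' terms with $u<t$ coming from the second piece are absorbed because $\int_r^{2^{j+1}r}\frac{\psi}{t}dt$ times the $j$-th value of $\Phi^{-1}(\vp)$ still sums, after reindexing, into the same double integral (one checks this by Fubini on the discrete sum). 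The main obstacle is precisely this last bookkeeping step: arranging the two pieces so that both collapse into the \emph{same} iterated integral, and making sure the factor $\psi(z,t)$ is evaluated at the center $z$ consistently (which is what allows Lemma~\ref{lem:int f-fB} to be applied with a fixed center). Everything else is a routine combination of \eqref{CZ1}, \eqref{fint_B f}, Lemma~\ref{lem:p}, Lemma~\ref{lem:int f-fB} and the doubling conditions.
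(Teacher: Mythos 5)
Your proposal is correct and follows essentially the same route as the paper's proof: dyadic annular decomposition of $\R^n\setminus 2B$, the kernel bound $|K(x,y)|\ls|z-y|^{-n}$, $L^p$--$L^{p'}$ H\"older with the exponent $p$ supplied by Lemma~\ref{lem:p}, the Campanato estimate of Lemma~\ref{lem:int f-fB}, doubling to convert discrete dyadic sums to integrals, and the final Fubini rewriting $\int_r^\infty\bigl(\int_r^u\frac{\psi(t)}{t}\,dt\bigr)\frac{\Phi^{-1}(\vp(u))}{u}\,du=\int_r^\infty\frac{\psi(t)}{t}\bigl(\int_t^\infty\frac{\Phi^{-1}(\vp(u))}{u}\,du\bigr)dt$. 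The one detour you take is unnecessary: since $B$ and $2^{j+1}B$ are concentric, Lemma~\ref{lem:int f-fB} applies \emph{directly} to $\bigl(\fint_{2^{j+1}B}|b-b_B|^{p'}\bigr)^{1/p'}\ls\int_r^{2^{j+1}r}\frac{\psi(t)}{t}\,dt\,\|b\|_{\cL_{1,\psi}}$ without first splitting $b-b_B=(b-b_{2^{j+1}B})+(b_{2^{j+1}B}-b_B)$ --- the telescoping you then have to do by hand is already built into that lemma, and avoiding the split eliminates precisely the ``bookkeeping'' step you identified as the main obstacle.
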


\begin{proof}
If $x\in B$ and $y\not\in2B$, 
then $|z-y|/2\le|x-y|\le 3|z-y|/2$.
From \eqref{CZ1} it follows that
$|K(x-y)|\ls|x-y|^{-n}\sim|z-y|^{-n}$.
Then
\begin{multline*}
 \int_{\R^n\setminus2B}|(b(y)-b_{B})K(x,y)f(y)|\,dy 
 \ls
 \int_{\R^n\setminus2B}\frac{|(b(y)-b_{B})f(y)|}{|z-y|^n}\,dy \\
 =
 \sum_{j=1}^{\infty}
 \int_{2^{j+1}B\setminus2^{j}B}\frac{|(b(y)-b_{B})f(y)|}{|z-y|^n}\,dy.
\end{multline*}
By Lemma~\ref{lem:p} we can find $p\in(1,\infty)$ such that
\begin{equation*}
 \left(\fint_{2^{j+1}B} |f(y)|^p dy\right)^{1/p}
 \ls
 \Phi^{-1}\left(\vp(2^{j+1}r)\right) \|f\|_{\LPp}.
\end{equation*}
By H\"older's inequality, Lemma~\ref{lem:int f-fB} 
and the doubling condition of $\psi$ and $\vp$
we have
\begin{align*}
 &\int_{2^{j+1}B\setminus2^{j}B}\frac{|(b(y)-b_{B})f(y)|}{|z-y|^n}\,dy \\
 &\sim
 \frac{1}{(2^{j+1}r)^n}
  \int_{2^{j+1}B\setminus2^{j}B}|(b(y)-b_{B})f(y)|\,dy \\
 &\ls
 \left(\fint_{2^{j+1}B}|b-b_{B}|^{p'}\,dy\right)^{1/p'} 
 \left(\fint_{2^{j+1}B}|f(y)|^{p}\,dy\right)^{1/p} \\
 &\ls
  \int_{r}^{2^{j+1}r}\frac{\psi(t)}{t}\,dt
  \,\Phi^{-1}(\vp(2^{j+1}r)) 
  \,\|b\|_{\cL_{1,\psi}}\|f\|_{\LPp} \\
 &\ls
 \int_{2^{j}r}^{2^{j+1}r}
  \left(\int_{r}^{u}\frac{\psi(t)}{t}\,dt\right)
  \frac{\Phi^{-1}(\vp(u))}{u}\,du 
  \,\|b\|_{\cL_{1,\psi}}\|f\|_{\LPp}.
\end{align*}
Therefore, 
\begin{multline*}
 \int_{\R^n\setminus2B}|(b(y)-b_{B})K(x,y)f(y)|\,dy \\
 \ls
 \int_{r}^{\infty}
  \left(\int_{r}^{u}\frac{\psi(t)}{t}\,dt\right)
  \frac{\Phi^{-1}(\vp(u))}{u}\,du 
  \,\|b\|_{\cL_{1,\psi}}\|f\|_{\LPp} \\
 =
 \int_{r}^{\infty}\frac{\psi(t)}{t}
  \left(\int_{t}^{\infty}
  \frac{\Phi^{-1}(\vp(u))}{u}\,du\right)\!dt
  \,\|b\|_{\cL_{1,\psi}}\|f\|_{\LPp}.
\end{multline*}
This is the conclusion.
\end{proof}

%%%===============================================
\begin{rem}\label{rem:well def T}
%%%===============================================
Under the assumption in Theorem~\ref{thm:comm T} (i), 
let $b\in \cL_{1,\psi}(\R^n)$ and $f\in \LPp(\R^n)$.
Since $\Phi\in\bdtwo$, there exists $p\in(1,\infty)$ such that $t^p \ls \Phi(t)$ for $t \ge 1$,
see Remark~\ref{rem:D2 n2}~(iv).
Then $\LPp(\R^n)\subset\LP_{\loc}(\R^n)\subset L^p_{\loc}(\R^n)$,
which implies $f\in L^p_{\loc}(\R^n)$ 
and $bf \in L^{p_1}_{\loc}(\R^n)$ for all $p_1\in(1,p)$
by Theorem~\ref{thm:J-N}.
Hence, $T(f\chi_{2B})$ and $T(bf\chi_{2B})$ are well defined for any ball $B=B(z,r)$.
By \eqref{int vp}, \eqref{vp psi T} and Lemma~\ref{lem:int Phi vp} we have
\begin{multline}\label{well def T 2}
 \int_{r}^{\infty}\frac{\psi(t)}{t}
  \left(\int_{t}^{\infty}
  \frac{\Phi^{-1}(\vp(u))}{u}\,du\right)\!dt \\
 \ls
 \int_{r}^{\infty}\frac{\psi(t)\Phi^{-1}(\vp(t))}{t}\,dt 
 \ls
 \int_{r}^{\infty}\frac{\Psi^{-1}(\vp(t))}{t}\,dt
 \ls
 \Psi^{-1}(\vp(r)).
\end{multline}
Then, by Lemmas~\ref{lem:int2Bc T} and \ref{lem:int2Bc psi T}, the integrals
\begin{equation*}
 \int_{\R^n\setminus 2B}|K(x,y)f(y)|\,dy
 \quad\text{and}\quad
 \int_{\R^n\setminus 2B}|K(x,y)b(y)f(y)|\,dy
\end{equation*}
converge.
That is, we can write
\begin{equation*}
 [b,T]f(x)
 =
 [b,T](f\chi_{2B})(x)
 +\int_{\R^n\setminus2B}(b(x)-b(y))K(x,y)f(y)\,dy,
 \quad x\in B.
\end{equation*}
Moreover,
if $x\in B_1\cap B_2$,
then, 
taking $B_3$ such that $B_1\cup B_2\subset B_3$,
we have
\begin{align*}
 &\left([b,T](f\chi_{2B_i})(x)
 +\int_{\R^n\setminus2B_i}(b(x)-b(y))K(x,y)f(y)\,dy\right) \\
 &-\left([b,T](f\chi_{2B_3})(x)
 +\int_{\R^n\setminus2B_3}(b(x)-b(y))K(x,y)f(y)\,dy\right) \\
 &=
 -[b,T](f\chi_{2B_3\setminus2B_i})(x)
 +\int_{2B_3\setminus2B_i}(b(x)-b(y))K(x,y)f(y)\,dy=0,
\end{align*}
by \eqref{CZ3}.
That is,
\begin{align*}
 &[b,T](f\chi_{2B_1})(x)
 +\int_{\R^n\setminus2B_1}(b(x)-b(y))K(x,y)f(y)\,dy \\
 &=[b,T](f\chi_{2B_2})(x)
 +\int_{\R^n\setminus2B_2}(b(x)-b(y))K(x,y)f(y)\,dy,
 \quad x\in B_1\cap B_2.
\end{align*}
This shows that $[b,T]f(x)$ in \eqref{bTf def} is independent
of the choice of the ball $B$ containing $x$.
\end{rem}

%%%===============================================
\begin{lem}\label{lem:int2Bc comm T}
%%%===============================================
Under the assumption of Theorem~\ref{thm:comm T} (i),
there exists a positive constant $C$ such that,
for all $b\in \cL_{1,\psi}(\R^n)$, all $f\in \LPp(\R^n)$ and all balls $B=B(z,r)$,
\begin{equation*}
 \left|\fint_{B}\left(
  \int_{\R^n\setminus 2B}(b(x)-b(y))K(x,y)f(y)\,dy
  \right)dx\right|
 \le C
 \Psi^{-1}(\vp(B)) \,\|b\|_{\cL_{1,\psi}}\|f\|_{\LPp}.
\end{equation*}
\end{lem}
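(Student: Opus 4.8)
The plan is to decompose $b(x)-b(y)=(b(x)-b_B)+(b_B-b(y))$, where $B=B(z,r)$, and to split the average accordingly:
\begin{equation*}
 \fint_B\!\left(\int_{\R^n\setminus2B}(b(x)-b(y))K(x,y)f(y)\,dy\right)\!dx
 =\mathrm{I}+\mathrm{II},
\end{equation*}
with $\mathrm{I}$ the piece carrying the factor $b(x)-b_B$ and $\mathrm{II}$ the piece carrying $b_B-b(y)$. All the integrals appearing here converge absolutely and the interchange of the order of integration is legitimate, by the discussion in Remark~\ref{rem:well def T}.

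For $\mathrm{I}$, I would estimate
\begin{equation*}
 |\mathrm{I}|
 \le
 \left(\sup_{x\in B}\left|\int_{\R^n\setminus2B}K(x,y)f(y)\,dy\right|\right)\fint_B|b(x)-b_B|\,dx.
\end{equation*}
The first factor is controlled by Lemma~\ref{lem:int2Bc T}, giving the bound $C\int_{2r}^{\infty}\frac{\Phi^{-1}(\vp(t))}{t}\,dt\,\|f\|_{\LPp}$; since $\Phi\in\bdtwo$ and $\vp$ satisfies \eqref{int vp}, Lemma~\ref{lem:int Phi vp} reduces this to $C\Phi^{-1}(\vp(r))\|f\|_{\LPp}$. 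The second factor is at most $\psi(r)\|b\|_{\cL_{1,\psi}}$ by the definition of $\cL_{1,\psi}(\R^n)$. Multiplying and using hypothesis \eqref{vp psi T} yields $|\mathrm{I}|\le C\Psi^{-1}(\vp(r))\|b\|_{\cL_{1,\psi}}\|f\|_{\LPp}$.

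For $\mathrm{II}$, I would bound $|\mathrm{II}|\le\fint_B\big(\int_{\R^n\setminus2B}|(b(y)-b_B)K(x,y)f(y)|\,dy\big)dx$ and apply Lemma~\ref{lem:int2Bc psi T} for each $x\in B$, which gives the bound $C\int_r^{\infty}\frac{\psi(t)}{t}\big(\int_t^{\infty}\frac{\Phi^{-1}(\vp(u))}{u}\,du\big)dt\,\|b\|_{\cL_{1,\psi}}\|f\|_{\LPp}$. The chain of estimates \eqref{well def T 2} --- which already combines \eqref{int vp}, \eqref{vp psi T} and Lemma~\ref{lem:int Phi vp} --- then bounds this double integral by $C\Psi^{-1}(\vp(r))$, so $|\mathrm{II}|\le C\Psi^{-1}(\vp(r))\|b\|_{\cL_{1,\psi}}\|f\|_{\LPp}$. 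Adding the two estimates and noting $\vp(B)=\vp(r)$ gives the conclusion.

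Since every ingredient is already in place, I do not expect a serious obstacle; the only points needing care are the legitimacy of the splitting together with Fubini's theorem (covered by Remark~\ref{rem:well def T}) and the bookkeeping ensuring that both $\mathrm{I}$ and $\mathrm{II}$ are dominated by the single quantity $\Psi^{-1}(\vp(r))$.
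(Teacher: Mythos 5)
Your proof is correct and follows essentially the same route as the paper: decompose $b(x)-b(y)$ through $b_B$, control the first piece via Lemma~\ref{lem:int2Bc T} together with Lemma~\ref{lem:int Phi vp} and \eqref{vp psi T}, and control the second piece via Lemma~\ref{lem:int2Bc psi T} and the chain \eqref{well def T 2}. The paper's version introduces the auxiliary functions $G_1(x)$ and $G_2(x)$ after putting absolute values inside, but the estimates are the same.
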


\begin{proof}
For $x\in B$, let 
\begin{align*}
 G_1(x)&=
 |b(x)-b_{B}|\int_{\R^n\setminus 2B}|K(x,y)f(y)|\,dy, \\
 G_2(x)&=
 \int_{\R^n\setminus 2B}|(b(y)-b_{B})K(x,y)f(y)|\,dy.
\end{align*}
Then
\begin{equation*}
 \left|\int_{\R^n\setminus 2B}(b(x)-b(y))K(x,y)f(y)\,dy\right|
 \le
 G_1(x)+G_2(x).
\end{equation*}
Using Lemmas~\ref{lem:int2Bc T} and \ref{lem:int2Bc psi T},
we have
\begin{equation}\label{for G1}
 \int_{\R^n\setminus 2B}|K(x,y)||f(y)|\,dy 
% \ls
% \int_{\R^n\setminus 2B}\frac{1}{|z-y|^n}|f(y)|\,dy 
 \ls
 \int_{2r}^{\infty}\frac{\Phi^{-1}(\vp(t))}{t}\,dt\,\|f\|_{\LPp},
 \quad x\in B,
\end{equation}
and 
\begin{multline}\label{for G2}
 \int_{\R^n\setminus 2B}|b(y)-b_{B}||K(x,y)||f(y)|\,dy \\
% \ls
% \int_{\R^n\setminus 2B}\frac{|b(y)-b_{B}|}{|z-y|^n}|f(y)|\,dy \\
 \ls
 \int_{r}^{\infty}\frac{\psi(t)}{t}
  \left(\int_{t}^{\infty}
  \frac{\Phi^{-1}(\vp(u))}{u}\,du\right)\!dt
  \,\|b\|_{\cL_{1,\psi}}\|f\|_{\LPp},
 \quad x\in B.
\end{multline}
Then, using \eqref{for G1}, \eqref{int P vp} and \eqref{vp psi T}, we have
\begin{align*}
 \fint_B G_1(x)\,dx
 &\ls
 \fint_B |b(x)-b_{B}|\,dx\,
 \Phi^{-1}(\vp(r))\|f\|_{\LPp} \\
 &\ls
 \psi(r)\Phi^{-1}(\vp(r))\|b\|_{\cL_{1,\psi}}\|f\|_{\LPp} \\
 &\ls
 \Psi^{-1}(\vp(r)) \|b\|_{\cL_{1,\psi}}\|f\|_{\LPp}.
\end{align*}
Using \eqref{for G2} and \eqref{well def T 2}, we also have
\begin{equation*}
 \fint_B G_2(x)\,dx
 \ls
 \Psi^{-1}(\vp(r))\|b\|_{\cL_{1,\psi}}\|f\|_{\LPp}.
\end{equation*}
Then we have the conclusion.
\end{proof}

%%%===============================================
\begin{lem}\label{lem:int2Bc}
%%%===============================================
Let $\Phi \in \iPy$ and $\vp\in\cGdec$.
Assume that $\rho$ satisfies \eqref{int rho} and \eqref{sup rho}. 
Then there exists a positive constant $C$ such that,
for all $f\in \LPp(\R^n)$ and all balls $B(x,r)$,
\begin{equation*}
 \int_{\R^n\setminus B(x,r)}\frac{\rho(|x-y|)}{|x-y|^n}|f(y)|\,dy
 \le
 C\int_{K_1r}^{\infty}\frac{\rho(t)\Phi^{-1}(\vp(t))}{t}\,dt\,\|f\|_{\LPp},
\end{equation*}
where $K_1$ is the constant in \eqref{sup rho}.
\end{lem}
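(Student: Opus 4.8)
The plan is to run the same dyadic-annulus decomposition used in the proof of Lemma~\ref{lem:int2Bc T}, but now tracking $\rho$ through hypothesis \eqref{sup rho}. Write $\R^n\setminus B(x,r)=\bigcup_{j\ge0}A_j$ with $A_j=B(x,2^{j+1}r)\setminus B(x,2^jr)$ and estimate the integral over each $A_j$ separately. On $A_j$ one has $|x-y|\sim 2^jr$, so $\rho(|x-y|)/|x-y|^n\ls(2^jr)^{-n}\sup_{2^jr\le t\le 2^{j+1}r}\rho(t)$, while \eqref{fint_B f} together with the doubling of $\vp$ and of $\Phi^{-1}$ (see \eqref{Phi-1 doubl}) gives $(2^jr)^{-n}\int_{A_j}|f|\ls\Phi^{-1}(\vp(2^jr))\|f\|_{\LPp}$. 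Hence the $A_j$-integral is at most a constant times $\big(\sup_{2^jr\le t\le 2^{j+1}r}\rho(t)\big)\Phi^{-1}(\vp(2^jr))\|f\|_{\LPp}$.

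Next I would invoke \eqref{sup rho} with radius $2^jr$ to replace $\sup_{2^jr\le t\le 2^{j+1}r}\rho(t)$ by a constant times $\int_{K_12^jr}^{K_22^jr}\rho(t)/t\,dt$. Since $K_1<K_2$ are fixed, every $t$ in this range is comparable to $2^jr$, so the doubling of $\Phi^{-1}(\vp(\cdot))$ lets me absorb the factor $\Phi^{-1}(\vp(2^jr))$ inside the integral, which bounds the $A_j$-integral by a constant times $\big(\int_{K_12^jr}^{K_22^jr}\rho(t)\Phi^{-1}(\vp(t))/t\,dt\big)\|f\|_{\LPp}$.

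Finally I would sum in $j$. The intervals $[K_12^jr,K_22^jr]$, $j=0,1,2,\dots$, all lie in $[K_1r,\infty)$, and each point of $[K_1r,\infty)$ belongs to at most $\lfloor\log_2(K_2/K_1)\rfloor+1$ of them; therefore $\sum_{j\ge0}\int_{K_12^jr}^{K_22^jr}\rho(t)\Phi^{-1}(\vp(t))/t\,dt\ls\int_{K_1r}^{\infty}\rho(t)\Phi^{-1}(\vp(t))/t\,dt$, and combining this with the previous displays yields the claim.

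I expect the only delicate point to be this last step: one must check that, after scaling the annuli by $2^j$, the resulting pieces $[K_12^jr,K_22^jr]$ overlap with bounded multiplicity and together cover a tail of $[K_1r,\infty)$. This is exactly why \eqref{sup rho}, with a genuine gap $K_1<K_2$, is the appropriate hypothesis rather than a pointwise estimate on $\rho$. Everything else — the annulus estimates, the use of \eqref{fint_B f}, and the doubling bookkeeping for $\vp$ and $\Phi^{-1}$ — is routine and entirely parallel to the proof of Lemma~\ref{lem:int2Bc T}.
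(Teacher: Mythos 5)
Your proposal is correct and follows essentially the same route as the paper's proof: dyadic annulus decomposition, the pointwise bound $\rho(|x-y|)/|x-y|^n\ls(2^jr)^{-n}\sup_{2^jr\le t\le 2^{j+1}r}\rho(t)$, an application of \eqref{fint_B f} with the doubling of $\vp$ and $\Phi^{-1}$, and then \eqref{sup rho} to convert the sup into an integral over $[K_12^jr,K_22^jr]$ before summing in $j$. Your extra remark about bounded overlap of the intervals $[K_12^jr,K_22^jr]$ is a small but welcome clarification that the paper leaves implicit.
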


\begin{proof}
Let $B=B(x,r)$. Then 
\begin{equation*}
 \int_{\R^n\setminus B(x,r)}\frac{\rho(|x-y|)}{|x-y|^n}|f(y)|\,dy 
 =
 \sum_{j=0}^{\infty}
 \int_{2^{j+1}B\setminus2^{j}B}\frac{\rho(|x-y|)}{|x-y|^n}|f(y)|\,dy.
\end{equation*}
By \eqref{sup rho}, \eqref{fint_B f}, H\"older's inequality and the doubling condition of $\vp$ we have
\begin{multline*}
 \int_{2^{j+1}B\setminus2^{j}B}\frac{\rho(|x-y|)}{|x-y|^n}|f(y)|\,dy 
 \ls
 \frac{\sup_{2^{j}r\le t\le2^{j+1}r}\rho(t)}{(2^{j+1}r)^n}
  \int_{2^{j+1}B\setminus2^{j}B}|f(y)|\,dy \\
 \ls
 \int_{K_12^{j}r}^{K_22^{j}r}\frac{\rho(t)}{t}\,dt \
 \Phi(\vp(2^{j+1}r)) \|f\|_{\LPp}
 \ls
 \int_{K_12^{j}r}^{K_22^{j}r}\frac{\rho(t)\Phi^{-1}(\vp(t))}{t}\,dt
 \,\|f\|_{\LPp}.
\end{multline*}
Therefore, we have the conclusion.
\end{proof}

%%%===============================================
\begin{lem}\label{lem:int2Bc psi}
%%%===============================================
Let $\Phi \in \ntwo$, $\vp\in\cGdec$ and $\psi\in\cGinc$.
Assume that $\rho$ satisfies \eqref{int rho} and \eqref{sup rho}. 
Then there exists a positive constant $C$ such that,
for all $b\in \cL_{1,\psi}(\R^n)$, all $f\in \LPp(\R^n)$ and all balls $B(x,r)$,
\begin{multline*}
 \int_{\R^n\setminus B(x,r)}|b(y)-b_{B(x,r)}|\frac{\rho(|x-y|)}{|x-y|^n}|f(y)|\,dy \\
 \le
 C\int_{K_1r}^{\infty}\frac{\psi(t)}{t}
  \left(\int_{t}^{\infty}
  \frac{\rho(u)\Phi^{-1}(\vp(u))}{u}\,du\right)\!dt
  \,\|b\|_{\cL_{1,\psi}}\|f\|_{\LPp},
\end{multline*}
where $K_1$ is the constant in \eqref{sup rho}.
\end{lem}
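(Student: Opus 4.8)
The plan is to combine the dyadic–annulus estimate used for Lemma~\ref{lem:int2Bc} — which converts $\rho$ on each annulus into a logarithmic integral of $\rho$ via \eqref{sup rho} — with the device from the proof of Lemma~\ref{lem:int2Bc psi T}, which handles $b-b_B$ on large balls by H\"older's inequality together with Lemma~\ref{lem:int f-fB}. Fixing $B=B(x,r)$, I would start from
\[
 \int_{\R^n\setminus B}|b(y)-b_{B}|\frac{\rho(|x-y|)}{|x-y|^{n}}|f(y)|\,dy
 =\sum_{j=0}^{\infty}\int_{2^{j+1}B\setminus 2^{j}B}|b(y)-b_{B}|\frac{\rho(|x-y|)}{|x-y|^{n}}|f(y)|\,dy,
\]
and on the $j$th annulus use $|x-y|\sim 2^{j}r$ to get $\rho(|x-y|)/|x-y|^{n}\ls(2^{j}r)^{-n}\sup_{2^{j}r\le t\le2^{j+1}r}\rho(t)$; by \eqref{sup rho} the supremum is $\ls\int_{K_{1}2^{j}r}^{K_{2}2^{j}r}\rho(t)/t\,dt$, exactly as in Lemma~\ref{lem:int2Bc}.

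Since $\Phi\in\ntwo$, Lemma~\ref{lem:p} supplies a $p\in(1,\infty)$ with $\big(\fint_{2^{j+1}B}|f|^{p}\big)^{1/p}\ls\Phi^{-1}(\vp(2^{j+1}r))\|f\|_{\LPp}$, while Lemma~\ref{lem:int f-fB} (applied with exponent $p'$, the innermost annulus $j=0$ being treated directly using the doubling of $\psi$ and $|b_{B}-b_{2B}|\ls\psi(r)\|b\|_{\cL_{1,\psi}}$) gives $\big(\fint_{2^{j+1}B}|b-b_{B}|^{p'}\big)^{1/p'}\ls\big(\int_{r}^{2^{j+1}r}\psi(t)/t\,dt\big)\|b\|_{\cL_{1,\psi}}$. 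Applying H\"older's inequality to $\fint_{2^{j+1}B}|b-b_{B}|\,|f|$ and collecting the factors, the $j$th annular integral is
\[
 \ls\Big(\int_{K_{1}2^{j}r}^{K_{2}2^{j}r}\frac{\rho(t)}{t}\,dt\Big)\Big(\int_{r}^{2^{j+1}r}\frac{\psi(t)}{t}\,dt\Big)\Phi^{-1}(\vp(2^{j+1}r))\,\|b\|_{\cL_{1,\psi}}\|f\|_{\LPp}.
\]

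Finally one sums over $j$. Using the doubling of $\Phi^{-1}(\vp(\cdot))$ (Remark~\ref{rem:D2 n2}(iii) together with the doubling of $\vp$) to rewrite $\Phi^{-1}(\vp(2^{j+1}r))\int_{K_{1}2^{j}r}^{K_{2}2^{j}r}\rho(t)/t\,dt\sim\int_{K_{1}2^{j}r}^{K_{2}2^{j}r}\rho(u)\Phi^{-1}(\vp(u))/u\,du$, and the doubling of $\psi$ to replace $\int_{r}^{2^{j+1}r}\psi(t)/t\,dt$ by $\int_{K_{1}r}^{u}\psi(t)/t\,dt$ for $u$ ranging over the $j$th block, the (boundedly overlapping) blocks reassemble into
\[
 \int_{K_{1}r}^{\infty}\Big(\int_{K_{1}r}^{u}\frac{\psi(t)}{t}\,dt\Big)\frac{\rho(u)\Phi^{-1}(\vp(u))}{u}\,du,
\]
which by Fubini's theorem equals the right-hand side of the lemma. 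I expect the main obstacle to be precisely this last bookkeeping: one must check that the constants $K_{1},K_{2}$ of \eqref{sup rho} produce no uncontrolled gaps between consecutive blocks and that the $j=0$ contribution is absorbed, which is where the doubling properties inherited from $\psi\in\cGinc$ and $\vp\in\cGdec$ are used repeatedly; the remaining steps are routine and parallel to those in Lemmas~\ref{lem:int2Bc} and \ref{lem:int2Bc psi T}.
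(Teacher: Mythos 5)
Your proposal is correct and follows essentially the same route as the paper: annular decomposition of $\R^n\setminus B$, the estimate $\sup_{2^jr\le t\le 2^{j+1}r}\rho(t)\ls\int_{K_12^jr}^{K_22^jr}\rho(t)/t\,dt$ from \eqref{sup rho}, H\"older's inequality together with Lemma~\ref{lem:p} (for $|f|^p$) and Lemma~\ref{lem:int f-fB} (for $|b-b_B|^{p'}$), and finally a Fubini-type rearrangement after rewriting each block via the doubling of $\Phi^{-1}(\vp(\cdot))$ and $\psi$. The bookkeeping worry you flag at the end is benign — the intervals $[K_12^jr,K_22^jr]$ have bounded overlap, so the sum is dominated by the integral over $[K_1r,\infty)$ regardless of gaps — and your direct treatment of the $j=0$ annulus (where the strict hypothesis $2r<s$ of Lemma~\ref{lem:int f-fB} fails) is a small but legitimate refinement of the paper's argument.
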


\begin{proof}
Let $B=B(x,r)$. Then
\begin{multline*}
 \int_{\R^n\setminus B(x,r)}|b(y)-b_{B}|\frac{\rho(|x-y|)}{|x-y|^n}|f(y)|\,dy 
\\
 =
 \sum_{j=0}^{\infty}
 \int_{2^{j+1}B\setminus2^{j}B}|b(y)-b_{B}|\frac{\rho(|x-y|)}{|x-y|^n}|f(y)|\,dy.
\end{multline*}
By Lemma~\ref{lem:p} we can find $p\in(1,\infty)$ such that
\begin{equation*}
 \left(\fint_{2^{j+1}B} |f(y)|^p dy\right)^{1/p}
 \ls
 \Phi^{-1}\left(\vp(2^{j+1}r)\right) \|f\|_{\LPp}.
\end{equation*}
By \eqref{sup rho}, \eqref{fint_B f}, H\"older's inequality, Lemma~\ref{lem:int f-fB} 
and the doubling condition of $\psi$ and $\vp$
we have
\begin{align*}
 &\int_{2^{j+1}B\setminus2^{j}B}|b(y)-b_{B}|\frac{\rho(|x-y|)}{|x-y|^n}|f(y)|\,dy \\
 &\ls
 \frac{\sup_{2^{j}r\le u\le2^{j+1}r}\rho(u)}{(2^{j+1}r)^n}
  \int_{2^{j+1}B\setminus2^{j}B}|b(y)-b_{B}||f(y)|\,dy \\
 &\ls
 \int_{K_12^{j}r}^{K_22^{j}r}\frac{\rho(u)}{u}\,du
 \left(\fint_{2^{j+1}B}|b-b_{B}|^{p'}\,dy\right)^{1/p'} 
 \left(\fint_{2^{j+1}B}|f(y)|^{p}\,dy\right)^{1/p} \\
 &\ls
 \int_{K_12^{j}r}^{K_22^{j}r}\frac{\rho(u)}{u}\,du
  \int_{r}^{2^{j+1}r}\frac{\psi(t)}{t}\,dt
  \,\Phi^{-1}(\vp(2^{j+1}r)) 
  \,\|b\|_{\cL_{1,\psi}}\|f\|_{\LPp} \\
 &\ls
 \int_{K_12^{j}r}^{K_22^{j}r}
  \left(\int_{K_1r}^{u}\frac{\psi(t)}{t}\,dt\right)
  \frac{\rho(u)\Phi^{-1}(\vp(u))}{u}\,du 
  \,\|b\|_{\cL_{1,\psi}}\|f\|_{\LPp}.
\end{align*}
Therefore, 
\begin{multline*}
 \int_{\R^n\setminus B}|b(y)-b_{B}|\frac{\rho(|x-y|)}{|x-y|^n}|f(y)|\,dy \\
 \ls
 \int_{K_1r}^{\infty}
  \left(\int_{K_1r}^{u}\frac{\psi(t)}{t}\,dt\right)
  \frac{\rho(u)\Phi^{-1}(\vp(u))}{u}\,du 
  \,\|b\|_{\cL_{1,\psi}}\|f\|_{\LPp} \\
 =
 \int_{K_1r}^{\infty}\frac{\psi(t)}{t}
  \left(\int_{t}^{\infty}
  \frac{\rho(u)\Phi^{-1}(\vp(u))}{u}\,du\right)\!dt
  \,\|b\|_{\cL_{1,\psi}}\|f\|_{\LPp}.
\end{multline*}
This is the conclusion.
\end{proof}

%%%===============================================
\begin{rem}\label{rem:well def}
%%%===============================================
Under the assumption in Theorem~\ref{thm:comm Ir} (i),
let $b\in \cL_{1,\psi}(\R^n)$ and $f\in \LPp(\R^n)$.
Then $f$ is in $L^p_{\loc}(\R^n)$ and $bf$ is in $L^{p_1}_{\loc}(\R^n)$ for all $p_1<p$
by the same way as in Remark~\ref{rem:well def T}.
Since $\frac{\rho(|y|)}{|y|^n}$ is integrable near the origin with respect to $y$,
$\Ir(|f|\chi_{2B})$ and $\Ir(|bf|\chi_{2B})$ are well defined for any ball $B=B(x,r)$.
By \eqref{comm Ir A} and \eqref{comm Mr A} we have
\begin{equation}\label{well def 1}
 \int_{K_1r}^{\infty}\frac{\rho(t)\Phi^{-1}(\vp(t))}{t}\,dt
 \ls
 \Theta^{-1}(\vp(K_1r))
 \ls
 \Theta^{-1}(\vp(r)),
\end{equation}
and
\begin{multline}\label{well def 2}
 \int_{K_1r}^{\infty}\frac{\psi(t)}{t}
  \left(\int_{t}^{\infty}
  \frac{\rho(u)\Phi^{-1}(\vp(u))}{u}\,du\right)\!dt \\
 \ls
 \int_{K_1r}^{\infty}\frac{\psi(t)\Theta^{-1}(\vp(t))}{t}\,dt 
 \ls
 \int_{K_1r}^{\infty}\frac{\Psi^{-1}(\vp(t))}{t}\,dt
 \ls
 \Psi^{-1}(\vp(r)).
\end{multline}
Then, by Lemmas~\ref{lem:int2Bc} and \ref{lem:int2Bc psi}, the integrals
\begin{equation*}
 \int_{\R^n\setminus 2B}\frac{\rho(|x-y|)}{|x-y|^n}|f(y)|\,dy
 \quad\text{and}\quad
 \int_{\R^n\setminus 2B}\frac{\rho(|x-y|)}{|x-y|^n}|b(y)f(y)|\,dy
\end{equation*}
converge.
That is, the integrals
\begin{equation*}
 \int_{\R^n}\frac{\rho(|x-y|)}{|x-y|^n}f(y)\,dy
 \quad\text{and}\quad
 \int_{\R^n}\frac{\rho(|x-y|)}{|x-y|^n}b(y)f(y)\,dy
\end{equation*}
converge absolutely a.e.~$x$ and we can write
\begin{equation*}
 [b,\Ir]f(x)=
 \int_{\R^n}(b(x)-b(y))\frac{\rho(|x-y|)}{|x-y|^n}f(y)\,dy,
 \quad \text{a.e.}~x.
\end{equation*}
\end{rem}

%%%===============================================
\begin{lem}\label{lem:int2Bc comm}
%%%===============================================
Under the assumption of Theorem~\ref{thm:comm Ir} (i),
there exists a positive constant $C$ such that,
for all $b\in \cL_{1,\psi}(\R^n)$, all $f\in \LPp(\R^n)$ and all balls $B=B(z,r)$,
\begin{multline*}
 \left|\fint_{B}\left(
  \int_{\R^n\setminus 2B}(b(x)-b(y))\frac{\rho(|x-y|)}{|x-y|^n}f(y)\,dy
  \right)dx\right|
\\
 \le C
 \Psi^{-1}(\vp(B)) \,\|b\|_{\cL_{1,\psi}}\|f\|_{\LPp}.
\end{multline*}
\end{lem}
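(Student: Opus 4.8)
The plan is to transcribe the proof of Lemma~\ref{lem:int2Bc comm T}, replacing the Calder\'on--Zygmund kernel estimates of Lemmas~\ref{lem:int2Bc T} and~\ref{lem:int2Bc psi T} by the fractional-kernel estimates of Lemmas~\ref{lem:int2Bc} and~\ref{lem:int2Bc psi}. Fix a ball $B=B(z,r)$, $b\in\cL_{1,\psi}(\R^n)$ and $f\in\LPp(\R^n)$. First I would write $b(x)-b(y)=(b(x)-b_B)-(b(y)-b_B)$ and, for $x\in B$, put
\begin{align*}
 G_1(x)&=|b(x)-b_B|\int_{\R^n\setminus 2B}\frac{\rho(|x-y|)}{|x-y|^n}|f(y)|\,dy, \\
 G_2(x)&=\int_{\R^n\setminus 2B}|b(y)-b_B|\frac{\rho(|x-y|)}{|x-y|^n}|f(y)|\,dy,
\end{align*}
so that the left-hand side in the statement is bounded by $\fint_B(G_1(x)+G_2(x))\,dx$. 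The geometric point I would use throughout is that $x\in B$ forces $B(x,r)\subset 2B$, hence $\R^n\setminus 2B\subset\R^n\setminus B(x,r)$, which lets Lemmas~\ref{lem:int2Bc} and~\ref{lem:int2Bc psi}, applied to the ball $B(x,r)$, control the tail integrals over $\R^n\setminus 2B$.

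For $G_1$ I would apply Lemma~\ref{lem:int2Bc} to $B(x,r)$ together with \eqref{well def 1} to get $\int_{\R^n\setminus 2B}\frac{\rho(|x-y|)}{|x-y|^n}|f(y)|\,dy\ls\Theta^{-1}(\vp(r))\|f\|_{\LPp}$ uniformly in $x\in B$, and then average: since $\fint_B|b(x)-b_B|\,dx\le\psi(r)\|b\|_{\cL_{1,\psi}}$ by the definition of $\cL_{1,\psi}$ and $\psi(r)\Theta^{-1}(\vp(r))\ls\Psi^{-1}(\vp(r))$ by \eqref{comm Mr A}, this gives $\fint_B G_1(x)\,dx\ls\Psi^{-1}(\vp(r))\|b\|_{\cL_{1,\psi}}\|f\|_{\LPp}$.

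For $G_2$ the one genuinely new step — absent in Lemma~\ref{lem:int2Bc comm T} because Lemma~\ref{lem:int2Bc psi T} is already phrased with $b_B$ — is that Lemma~\ref{lem:int2Bc psi} uses the average $b_{B(x,r)}$ rather than $b_B$. I would therefore write $|b(y)-b_B|\le|b(y)-b_{B(x,r)}|+|b_{B(x,r)}-b_B|$; comparing averages over the equal-radius balls $B(x,r)$ and $B$, both contained in $2B$, yields $|b_{B(x,r)}-b_B|\le|b_{B(x,r)}-b_{2B}|+|b_B-b_{2B}|\ls\fint_{2B}|b-b_{2B}|\ls\psi(r)\|b\|_{\cL_{1,\psi}}$ by the doubling of $\psi$. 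For the $b_{B(x,r)}$ part, Lemma~\ref{lem:int2Bc psi} applied to $B(x,r)$ and then \eqref{well def 2} give a contribution $\ls\Psi^{-1}(\vp(r))\|b\|_{\cL_{1,\psi}}\|f\|_{\LPp}$, uniformly in $x$; for the remaining part, $|b_{B(x,r)}-b_B|\int_{\R^n\setminus 2B}\frac{\rho(|x-y|)}{|x-y|^n}|f(y)|\,dy$ is handled exactly as in the $G_1$ estimate via \eqref{well def 1} and \eqref{comm Mr A}. Averaging over $x\in B$ and summing the two bounds gives the claimed inequality, with $\Psi^{-1}(\vp(B))=\Psi^{-1}(\vp(r))$.

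The hard part, such as it is, is bookkeeping rather than analysis: tracking the center shift $b_B\leftrightarrow b_{B(x,r)}$ in the $G_2$ estimate and correctly chaining \eqref{comm Ir A}, \eqref{comm Mr A}, the doubling of $\vp$, $\Theta^{-1}$ and $\Psi^{-1}$, and Lemma~\ref{lem:int Phi vp}. All of this is, however, already packaged in the estimates \eqref{well def 1} and \eqref{well def 2} of Remark~\ref{rem:well def}, so the argument is essentially a transcription of the proof of Lemma~\ref{lem:int2Bc comm T} with the fractional-kernel tail estimates substituted in.
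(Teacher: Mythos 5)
Your decomposition into $G_1$ and $G_2$ and the final chain of estimates via \eqref{well def 1}, \eqref{well def 2} and \eqref{comm Mr A} are exactly what the paper does, so the overall strategy is the same. The one place where you diverge is in how the pointwise bound for $G_2$ is justified: the paper does not invoke Lemmas~\ref{lem:int2Bc} and~\ref{lem:int2Bc psi} as black boxes but instead says ``a similar way,'' i.e.\ it silently re-runs the annular dyadic estimate around the center $z$, using $|x-y|\sim|z-y|$ for $x\in B$, $y\notin 2B$, so that the oscillation $b-b_B$ and Lemma~\ref{lem:int f-fB} enter directly with the correct center. You instead keep the lemmas as stated, apply them to $B(x,r)$ using the containment $\R^n\setminus 2B\subset\R^n\setminus B(x,r)$, and then repair the center shift $b_{B(x,r)}\leftrightarrow b_B$ by a triangle inequality and the bound $|b_{B(x,r)}-b_B|\ls\psi(r)\|b\|_{\cL_{1,\psi}}$, with the extra term absorbed via \eqref{well def 1} and \eqref{comm Mr A}. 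Both routes are correct; yours is more modular (no re-computation of the dyadic decomposition) at the cost of one extra splitting, while the paper's is more direct but requires redoing the proof of Lemma~\ref{lem:int2Bc psi} with the shifted center. Your proposal is a valid and complete filling-in of the argument.
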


\begin{proof}
For $x\in B$, let 
\begin{align*}
 G_1(x)&=
 |b(x)-b_{B}|\int_{\R^n\setminus 2B}\frac{\rho(|x-y|)}{|x-y|^n}|f(y)|\,dy, \\
 G_2(x)&=
 \int_{\R^n\setminus 2B}|b(y)-b_{B}|\frac{\rho(|x-y|)}{|x-y|^n}|f(y)|\,dy.
\end{align*}
Then
\begin{equation*}
 \left|\int_{\R^n\setminus 2B}(b(x)-b(y))\frac{\rho(|x-y|)}{|x-y|^n}f(y)\,dy\right|
 \le
 G_1(x)+G_2(x).
\end{equation*}

\if0
If $x\in B$ and $y\not\in2B$, 
then $|x-z|<|x-y|$ and $|z-y|/2\le|x-y|\le(3/2)|z-y|$.
By the properties \eqref{sup rho} of $\rho$
we have
\begin{equation*}
\rho(|x-y|)\le\sup_{|z-y|/2\le t\le(3/2)|z-y|}\rho(t),
\end{equation*}
and
\begin{align*}
 &\int_{2^{j+2}B\setminus2^{j+1}B}\frac{\rho(|x-y|)}{|x-y|^n}|f(y)|\,dy \\
 &\ls
 \int_{2^{j+2}B\setminus2^{j+1}B}\frac{\sup_{|z-y|/2\le t\le(3/2)|z-y|}\rho(t)}{|z-y|^n}|f(y)|\,dy \\
 &\ls
 \frac{\sup_{2^{j}r\le t\le3\cdot2^{j+1}r}\rho(t)}
      {(2^{j+2}r)^n}\int_{2^{j+2}B\setminus2^{j+1}B}|f(y)|\,dy \\
 &\ls
 \int_{2^{j}K_1r}^{3\cdot2^{j}K_2r}\frac{\rho(t)}{t}\,dt
 \left(\fint_{2^{j+2}B}|f(y)|^{p}\,dy\right)^{1/p}.
\end{align*}
\fi

Using this estimate and a similar way to Lemmas~\ref{lem:int2Bc} and \ref{lem:int2Bc psi},
we have that, for all $x\in B$,
\begin{align*}
 G_1(x)
 &\ls
 |b(x)-b_{B}|
 \int_{K_1r}^{\infty}
 \frac{\rho(t)\Phi^{-1}\left(\vp(t)\right)}{t}\,dt\, \|f\|_{\LPp}, \\
% \quad\text{for all} \ x\in B, \\
 G_2(x)
 &\ls
 C\int_{K_1r}^{\infty}\frac{\psi(t)}{t}
  \left(\int_{t}^{\infty}
  \frac{\rho(u)\Phi^{-1}\left(\vp(u)\right)}{u}\,du\right)\!dt
  \,\|b\|_{\cL_{1,\psi}}\|f\|_{\LPp}.
\end{align*}
Then, using \eqref{well def 1} and \eqref{well def 2} also, we have
\begin{align*}
 \fint_B G_1(x)\,dx
 &\ls
 \fint_B |b(x)-b_{B}|\,dx\,
 \Theta^{-1}(\vp(r))\|f\|_{\LPp} \\
 &\ls
 \psi(r)\Theta^{-1}(\vp(r)) \|b\|_{\cL_{1,\psi}}\|f\|_{\LPp} \\
 &\ls
 \Psi^{-1}(\vp(r)) \|b\|_{\cL_{1,\psi}}\|f\|_{\LPp},
\end{align*}
and
\begin{equation*}
 \fint_B G_2(x)\,dx
 \ls
 \Psi^{-1}(\vp(r))\|b\|_{\cL_{1,\psi}}\|f\|_{\LPp}.
\end{equation*}
Then we have the conclusion.
\end{proof}

%%%===================================================================
%%%===================================================================
%\section{Proofs of Theorem~\ref{thm:comm T} and Theorem~\ref{thm:comm Ir}}
\section{Proofs of the main results}\label{sec:proof}
%%%===================================================================
%%%===================================================================

We use the following two propositions.
We omit their proofs 
because the proof methods are almost same as \cite[Propositions~5.1 and 5.2]{Arai-Nakai2018RMC} 
and \cite[Proposition~6.2]{Shi-Arai-Nakai2019Taiwan}.

%%%===============================================
\begin{prop}\label{prop:pointwise CZO}
%%%===============================================
Let $T$ be a Calder\'on-Zygmund operator of type $\omega$.
Let $\psi\in\cGinc$.
Assume that $\omega$ and $\psi$ satisfy the same assumption in Theorem~\ref{thm:comm T}.
Then, for any $\eta\in(1,\infty)$, there exists a positive constant $C$ such that, 
for all $b\in\cL_{1,\psi}(\R^n)$, $f \in \LPp(\R^n)$ and $x\in\R^n$,
\begin{equation}\label{pointwise}
 M^{\sharp}([b,T]f)(x)
 \le
 C\|b\|_{\cL_{1,\psi}}
 \bigg(
  \big(M_{\psi^{\eta}}(|Tf|^{\eta})(x)\big)^{1/\eta}
  +\big(M_{\psi^{\eta}}(|f|^{\eta})(x)\big)^{1/\eta}
 \bigg),
\end{equation}
where $M_{\psi^{\eta}}$ is the fractional maximal operator defined by
\begin{equation*}
 M_{\psi^{\eta}}f(x)=\sup_{B(a,r)\ni x} \psi(r)^{\eta}\fint_{B(a,r)}|f(y)|\,dy,
 \quad x\in\R^n.
\end{equation*}
\end{prop}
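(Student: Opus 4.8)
The plan is to estimate, for an arbitrary ball $B=B(z,r)$ with $x\in B$, the mean oscillation $\fint_B|[b,T]f(y)-c_B|\,dy$ with a suitably chosen constant $c_B$, and then to take the supremum over all $B\ni x$; since $M^{\sharp}g(x)\le 2\sup_{B\ni x}\inf_c\fint_B|g-c|$, this yields \eqref{pointwise}. I may assume the right-hand side of \eqref{pointwise} is finite, which, together with Lemma~\ref{lem:p}, Theorem~\ref{thm:J-N} and the $\Delta_2\cap\nabla_2$ hypotheses on $\Phi,\Psi$, forces $f,Tf\in L^{\eta}_{\loc}(\R^n)$ and $bf\in L^{p_1}_{\loc}(\R^n)$ for some $p_1>1$, as in Remark~\ref{rem:well def T}. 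Splitting $f=f_1+f_2$ with $f_1=f\chi_{2B}$ and $f_2=f\chi_{\R^n\setminus2B}$, writing $b(y)-b(w)=(b(y)-b_B)-(b(w)-b_B)$ in \eqref{bTf def}, and recombining the $(b(y)-b_B)$-pieces into $Tf$ through \eqref{Tf def}, I would obtain
\[
 [b,T]f(y)=(b(y)-b_B)\,Tf(y)-T\!\left((b-b_B)f_1\right)(y)-T\!\left((b-b_B)f_2\right)(y),\qquad y\in B,
\]
and then choose $c_B=T((b-b_B)f_2)(z)$, so that it remains to bound the $B$-averages of the three terms on the right.

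For the first term I would use H\"older's inequality with exponents $\eta'=\eta/(\eta-1)$ and $\eta$, together with the John--Nirenberg type bound $\bigl(\fint_B|b-b_B|^{\eta'}\bigr)^{1/\eta'}\ls\psi(r)\|b\|_{\cL_{1,\psi}}$ coming from Theorem~\ref{thm:J-N}, to get $\fint_B|b-b_B|\,|Tf|\ls\psi(r)\|b\|_{\cL_{1,\psi}}\bigl(\fint_B|Tf|^{\eta}\bigr)^{1/\eta}\le\|b\|_{\cL_{1,\psi}}\bigl(M_{\psi^{\eta}}(|Tf|^{\eta})(x)\bigr)^{1/\eta}$. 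For the second term I would pick $q_2>\eta'$ and set $q_1=(1/q_2+1/\eta)^{-1}>1$; by Theorem~\ref{thm:J-N} and H\"older, $(b-b_B)f_1\in L^{q_1}(2B)$, so the $L^{q_1}$-boundedness of $T$ (Yabuta's theorem) gives
\[
 \fint_B|T((b-b_B)f_1)|\le\Bigl(\fint_B|T((b-b_B)f_1)|^{q_1}\Bigr)^{1/q_1}\ls\Bigl(\fint_{2B}|b-b_B|^{q_2}\Bigr)^{1/q_2}\Bigl(\fint_{2B}|f|^{\eta}\Bigr)^{1/\eta}\ls\|b\|_{\cL_{1,\psi}}\bigl(M_{\psi^{\eta}}(|f|^{\eta})(x)\bigr)^{1/\eta},
\]
using the doubling of $\psi$ and $x\in B\subset2B$.

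The main obstacle is the third, tail term. For $y\in B$ and $w\notin2B$ one has $|y-z|<r\le\tfrac12|w-z|$, hence $|w-y|\sim|w-z|$ and, by \eqref{CZ2}, $|K(y,w)-K(z,w)|\ls|w-z|^{-n}\,\omega(|y-z|/|w-z|)$. I would split $\R^n\setminus2B$ into the annuli $2^{j+1}B\setminus2^jB$ ($j\ge1$), bound $\omega(|y-z|/|w-z|)\le\omega(2^{-j})$ on the $j$-th one, apply H\"older with $(q_2,\eta)$, and use $\bigl(\fint_{2^{j+1}B}|b-b_B|^{q_2}\bigr)^{1/q_2}\ls(j+1)\,\psi(2^{j+1}r)\|b\|_{\cL_{1,\psi}}$ (Remark~\ref{rem:int f-fB}) together with $\bigl(\fint_{2^{j+1}B}|f|^{\eta}\bigr)^{1/\eta}\le\psi(2^{j+1}r)^{-1}\bigl(M_{\psi^{\eta}}(|f|^{\eta})(x)\bigr)^{1/\eta}$; the $\psi(2^{j+1}r)$-factors then cancel and I am left with
\[
 |T((b-b_B)f_2)(y)-c_B|\ls\Bigl(\sum_{j\ge1}(j+1)\,\omega(2^{-j})\Bigr)\|b\|_{\cL_{1,\psi}}\bigl(M_{\psi^{\eta}}(|f|^{\eta})(x)\bigr)^{1/\eta}\ls\|b\|_{\cL_{1,\psi}}\bigl(M_{\psi^{\eta}}(|f|^{\eta})(x)\bigr)^{1/\eta},
\]
where the series converges because $\sum_{j\ge1}(j+1)\,\omega(2^{-j})\ls\int_0^1\frac{\omega(t)\log(1/t)}{t}\,dt<\infty$, the assumption on $\omega$ inherited from Theorem~\ref{thm:comm T}. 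Adding the three estimates and taking $\sup_{B\ni x}$ then gives \eqref{pointwise}. The delicate point is precisely this tail estimate: the logarithmic loss $(j+1)$ produced by the growth of the Campanato oscillation of $b$ across dyadic scales is compensated exactly by the strengthened Dini condition, which is stronger than what is needed for the boundedness of $T$ itself; the intermediate powers $\eta$ enter only to keep the argument within $M^{\sharp}$ (rather than $M^{\sharp}_{\delta}$) while using H\"older for the product $|b-b_B|\,|f|$.
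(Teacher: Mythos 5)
Your proof is correct and follows the same standard route as the references the paper cites in lieu of a proof ([Arai-Nakai2018RMC, Propositions~5.1--5.2] and [Shi-Arai-Nakai2019Taiwan, Proposition~6.2]). The decomposition $[b,T]f=(b-b_B)Tf-T\big((b-b_B)f\chi_{2B}\big)-T\big((b-b_B)f\chi_{\R^n\setminus2B}\big)$ with $c_B=T\big((b-b_B)f\chi_{\R^n\setminus2B}\big)(z)$, the John--Nirenberg bound from Theorem~\ref{thm:J-N} for the near terms, the $L^{q_1}$-boundedness of $T$ for the middle term, and the dyadic-annulus tail estimate via \eqref{CZ2} and Remark~\ref{rem:int f-fB}, with the strengthened Dini condition $\int_0^1\frac{\omega(t)\log(1/t)}{t}\,dt<\infty$ absorbing the logarithmic factor $(j+1)$, are exactly the right ingredients, correctly assembled. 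One cosmetic slip: in the tail estimate ``H\"older with $(q_2,\eta)$'' is imprecise, since these are not conjugate when $q_2>\eta'$; pair $q_2$ with $q_2'$ and pass from $q_2'$ to $\eta$ by Jensen, or simply use $\eta'$ directly (Remark~\ref{rem:int f-fB} holds for any $p\ge1$); the rest is airtight.
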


%%%===============================================
\begin{prop}\label{prop:pointwise Ir}
%%%===============================================
Assume that $\rho:(0,\infty)\to(0,\infty)$ satisfies \eqref{int rho}.
Let
\begin{equation}\label{rho*}
 \rho^*(r)=\int_0^r\frac{\rho(t)}{t}\,dt.
\end{equation}
Assume that 
the condition \eqref{rho conti} holds
and that
$r\mapsto\rho(r)/r^{n-\epsilon}$ is almost decreasing for some $\epsilon>0$. Assume also that
\begin{equation}\label{int converge}
 \int_{r}^{\infty}\frac{\rho(t)\Phi^{-1}(\vp(t))}{t}\,dt<\infty,
 \quad
 \int_{r}^{\infty}\frac{\psi(t)}{t}
  \left(\int_{t}^{\infty}
   \frac{\rho(u)\Phi^{-1}(\vp(u))}{u}\,du\right)\!dt<\infty,
\end{equation}
Then, for any $\eta\in(1,\infty)$, there exists a positive constant $C$ such that, 
for all $b\in\cL_{1,\psi}(\R^n)$, $f \in \LPp(\R^n)$ and $x\in\R^n$,
\begin{equation}\label{pointwise Ir}
 \Ms([b,\Ir]f)(x)
 \le
 C\|b\|_{\cL_{1,\psi}}
 \bigg(
  \big(M_{\psi^\eta}(|\Ir f|^{\eta})(x)\big)^{1/\eta}
  +\big(M_{(\rho^*\psi)^{\eta}}(|f|^{\eta})(x)\big)^{1/\eta}
 \bigg)
\end{equation}
where $M_{(\rho^*\psi)^{\eta}}$ is the fractional maximal operator defined by
\begin{equation*}
 M_{(\rho^*\psi)^{\eta}}f(x)=\sup_{B(a,r)\ni x} (\rho^*(r)\psi(r))^{\eta}\fint_{B(a,r)}|f(y)|\,dy,
 \quad x\in\R^n.
\end{equation*}
\end{prop}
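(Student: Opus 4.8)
The plan is to mimic, for the generalized fractional kernel, the Calder\'on--Zygmund commutator argument underlying Proposition~\ref{prop:pointwise CZO}. Fix $x\in\R^n$ and a ball $B=B(x_0,R)$ containing $x$, put $b_B=\fint_B b$, and use the pointwise identity
\begin{equation*}
 [b,\Ir]f
 =(b-b_B)\,\Ir f-\Ir\!\bigl((b-b_B)f\chi_{2B}\bigr)-\Ir\!\bigl((b-b_B)f\chi_{\R^n\setminus 2B}\bigr)
 =:E_1+E_2+E_3,
\end{equation*}
which holds a.e.\ because $f\in\LPp(\R^n)$ and $b\in\cL_{1,\psi}(\R^n)$ give $f\in L^{p}_{\loc}(\R^n)$ and $bf\in L^{p_1}_{\loc}(\R^n)$ for some $p,p_1>1$, exactly as in Remark~\ref{rem:well def}. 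Since $\Ms g(x)\le 2\sup_{B\ni x}\inf_{c}\fint_B|g-c|$, it suffices to bound $\fint_B\bigl|[b,\Ir]f-c\bigr|$ uniformly in $B\ni x$ for one convenient constant; take $c=E_3(x_0)$, which is finite precisely by the second condition in \eqref{int converge} (decompose $\R^n\setminus 2B$ into dyadic annuli, apply H\"older with Lemma~\ref{lem:p} and Lemma~\ref{lem:int f-fB}, and sum). Then $\fint_B|[b,\Ir]f-c|\le\fint_B|E_1|+\fint_B|E_2|+\fint_B|E_3-E_3(x_0)|$, and each of the three averages will be controlled by $\|b\|_{\cL_{1,\psi}}$ times one of the two fractional maximal quantities in \eqref{pointwise Ir}.

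For $E_1$, H\"older's inequality with exponents $\eta,\eta'$ gives $\fint_B|E_1|\le\bigl(\fint_B|b-b_B|^{\eta'}\bigr)^{1/\eta'}\bigl(\fint_B|\Ir f|^{\eta}\bigr)^{1/\eta}$; by Theorem~\ref{thm:J-N} (i.e.\ $\cL_{\eta',\psi}=\cL_{1,\psi}$) the first factor is $\ls\psi(R)\|b\|_{\cL_{1,\psi}}$, while $\psi(R)\bigl(\fint_B|\Ir f|^{\eta}\bigr)^{1/\eta}\le\bigl(M_{\psi^{\eta}}(|\Ir f|^{\eta})(x)\bigr)^{1/\eta}$ because $x\in B$. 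For $E_2$ I would first record the elementary local bound $\fint_B|\Ir(g\chi_{2B})|\ls \rho^*(R)\fint_{2B}|g|$, obtained from Fubini and $\int_{B(z,3R)}\rho(|y-z|)|y-z|^{-n}\,dy\sim\rho^*(R)$ (finiteness from \eqref{int rho}, the comparison from \eqref{sup rho} and the almost monotonicity of $\rho(r)/r^{n-\epsilon}$). Applying this with $g=(b-b_B)f$, then H\"older and Lemma~\ref{lem:int f-fB} together with the harmless correction $|b_{2B}-b_B|\ls\psi(R)\|b\|_{\cL_{1,\psi}}$, and finally the doubling of $\rho^*$ and $\psi$ and the fact $x\in 2B$, one gets $\fint_B|E_2|\ls\rho^*(R)\psi(R)\|b\|_{\cL_{1,\psi}}\bigl(\fint_{2B}|f|^{\eta}\bigr)^{1/\eta}\ls\|b\|_{\cL_{1,\psi}}\bigl(M_{(\rho^*\psi)^{\eta}}(|f|^{\eta})(x)\bigr)^{1/\eta}$; the higher local integrability of $f$ needed in the H\"older step is supplied by $\Phi\in\bntwo$ through Lemma~\ref{lem:p}, and the weight $\rho^*\psi$ inside the maximal operator is exactly what the \emph{size} of the kernel produces here.

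The main work is the smooth part $E_3$. For $y\in B$ and $z\notin 2B$ we have $|x_0-y|<R\le|x_0-z|/2$, hence $\tfrac12\le|x_0-z|/|y-z|\le 2$, so the regularity hypothesis \eqref{rho conti} applies with $r=|x_0-z|$ and $s=|y-z|$ and yields
\begin{equation*}
 \left|\frac{\rho(|y-z|)}{|y-z|^{n}}-\frac{\rho(|x_0-z|)}{|x_0-z|^{n}}\right|
 \ls R\,\frac{\rho^*(|x_0-z|)}{|x_0-z|^{n+1}} ,
\end{equation*}
whence $|E_3(y)-E_3(x_0)|\ls R\int_{\R^n\setminus 2B}\rho^*(|x_0-z|)|x_0-z|^{-n-1}|b(z)-b_B||f(z)|\,dz$. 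I would then split $\R^n\setminus 2B$ into the annuli $2^{j+1}B\setminus 2^jB$ ($j\ge1$), on which $|x_0-z|\sim 2^jR$ and $\rho^*(|x_0-z|)\le\rho^*(2^{j+1}R)$, and on each of them use H\"older together with Remark~\ref{rem:int f-fB} (which costs a factor $\ls (j+1)\psi(2^{j+1}R)$) and with $\bigl(\fint_{2^{j+1}B}|f|^{\eta}\bigr)^{1/\eta}\le\bigl(\rho^*(2^{j+1}R)\psi(2^{j+1}R)\bigr)^{-1}\bigl(M_{(\rho^*\psi)^{\eta}}(|f|^{\eta})(x)\bigr)^{1/\eta}$. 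The two weights $\rho^*(2^{j+1}R)$ and $\psi(2^{j+1}R)$ then cancel against the normalization of the maximal operator, the outer factor $R$ cancels the $2^{-j}R$ coming from the annulus volume and the kernel, and the $j$-th annulus contributes $\ls (j+1)2^{-j}\|b\|_{\cL_{1,\psi}}\bigl(M_{(\rho^*\psi)^{\eta}}(|f|^{\eta})(x)\bigr)^{1/\eta}$; summing the convergent series $\sum_{j\ge1}(j+1)2^{-j}$ gives $\fint_B|E_3-E_3(x_0)|\ls\|b\|_{\cL_{1,\psi}}\bigl(M_{(\rho^*\psi)^{\eta}}(|f|^{\eta})(x)\bigr)^{1/\eta}$. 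Combining the three estimates and taking the supremum over $B\ni x$ proves \eqref{pointwise Ir}.

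I expect the delicate step to be $E_3$ throughout: verifying $|x_0-z|\sim|y-z|$ so that the mean-value estimate \eqref{rho conti} is genuinely available, and then balancing the logarithmic growth of the Campanato oscillation of $b$ over the annuli against the geometric decay from $R|x_0-z|^{-n-1}$ and against the $\rho^*$--weights, so that everything telescopes against the normalization of $M_{(\rho^*\psi)^{\eta}}$ and leaves a summable tail. This bookkeeping is exactly what forces the weight in the \emph{second} maximal operator in \eqref{pointwise Ir} to be $\rho^*\psi$ rather than $\psi$, which is the only real difference from the Calder\'on--Zygmund commutator estimate of Proposition~\ref{prop:pointwise CZO}.
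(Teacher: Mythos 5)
Your proof is correct, and it follows the same decomposition scheme that underlies the cited references \cite[Propositions~5.1, 5.2]{Arai-Nakai2018RMC} and \cite[Proposition~6.2]{Shi-Arai-Nakai2019Taiwan} for which the paper omits details. The splitting $[b,\Ir]f=(b-b_B)\Ir f-\Ir((b-b_B)f\chi_{2B})-\Ir((b-b_B)f\chi_{\R^n\setminus 2B})$ with the reference constant $c=E_3(x_0)$, the size bound $\fint_B|\Ir(g\chi_{2B})|\ls\rho^*(R)\fint_{2B}|g|$ for the local piece, and the mean-value regularity estimate from \eqref{rho conti} for the far piece, followed by the dyadic-annulus bookkeeping with the factor $(j+1)2^{-j}$, is exactly the expected argument and all the estimates check out; in particular the comparison $\frac23\le|x_0-z|/|y-z|\le2$ legitimizes the use of \eqref{rho conti}, and the normalizations against $\rho^*\psi$ and $\psi$ are handled correctly.

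One small bookkeeping remark: you invoke \eqref{sup rho} in the $E_2$ step, but \eqref{sup rho} is not among the hypotheses of this proposition. This is harmless because the almost decreasingness of $r\mapsto\rho(r)/r^{n-\epsilon}$ already gives both $\rho(r)\ls\rho^*(r)$ and $\sup_{r\le t\le 2r}\rho(t)\ls\rho(r)$, hence the doubling of $\rho^*$ and the comparison $\int_{B(z,3R)}\rho(|y-z|)|y-z|^{-n}\,dy\sim\rho^*(R)$ that you need. It would be cleaner to derive these directly from the stated almost-decreasingness rather than appealing to \eqref{sup rho}.
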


We note that the condition \eqref{int converge} is used to prove the well definedness of $[b,\Ir]f$.

Next, we note that,
for $\theta\in(0,\infty)$,
\begin{equation}\label{theta}
 \||g|^{\theta}\|_{\LPp}=\left(\|g\|_{L^{(\Phi((\cdot)^{\theta}), \vp))}}\right)^{\theta}.
\end{equation}

%%%===================================================================
\begin{proof}[\bf Proof of Theorem~\ref{thm:comm T} (i)]
%%%===================================================================
First note that $T$ is bounded on $\LPp(\R^n)$
as we state just before Theorem~\ref{thm:comm T}.
We can take $\eta\in(1,\infty)$ such that $\Phi((\cdot)^{1/\eta})\in\bntwo$
by Lemma~\ref{lem:eta}.
Then, from \eqref{vp psi T} it follows that
\begin{equation*}
 \psi(r)^{\eta}\Phi^{-1}(\vp(r))^{\eta}
 \le
 {C_0}^{\eta}\Psi^{-1}(\vp(r))^{\eta}.
\end{equation*}
By Theorem~\ref{thm:Mr} with this condition
we have the boundedness of 
$M_{\psi^{\eta}}$ from $L^{(\Phi((\cdot)^{1/\eta}),\vp)}(\R^n)$ to $L^{(\Psi((\cdot)^{1/\eta}),\vp)}(\R^n)$.
Using this boundedness and \eqref{theta}, we have
\begin{align*}
 &\left\|\big(M_{\psi^\eta}(|Tf|^{\eta})\big)^{1/\eta}\right\|_{\LPsp}
 =
 \left(\left\|M_{\psi^\eta}(|Tf|^{\eta})\right\|_{L^{(\Psi((\cdot)^{1/\eta}),\vp)}}\right)^{1/\eta} \\
 &\ls
 \left(\left\||Tf|^{\eta}\right\|_{L^{(\Phi((\cdot)^{1/\eta}),\vp)}}\right)^{1/\eta} 
 =
 \|Tf\|_{\LPp} 
 \ls
 \|f\|_{\LPp}.
\end{align*}
and 
\begin{align*}
 \|(M_{\psi^{\eta}}(|f|^{\eta}))^{1/\eta}\|_{\LPsp}
 &=
 \big(\|M_{\psi^{\eta}}(|f|^{\eta})\|_{L^{(\Psi((\cdot)^{1/\eta}),\vp)}}\big)^{1/\eta} \\
 &\ls
 \big(\||f|^{\eta}\|_{L^{(\Phi((\cdot)^{1/\eta}),\vp)}}\big)^{1/\eta} 
 =
 \|f\|_{\LPp}.
\end{align*}
Then, using Proposition~\ref{prop:pointwise CZO}, we have
\begin{equation}\label{Ms bf T}
 \|\Ms([b,T]f)\|_{\LPsp}
 \ls
 \|b\|_{\cL_{1,\psi}}\|f\|_{\LPp}.
\end{equation}
Therefore, 
if we show that, for $B_r=B(0,r)$,
\begin{equation}\label{mean vanish T}
 \fint_{B_r}[b,T]f
 \to0
 \quad\text{as}\quad r\to\infty,
\end{equation}
then by Corollary~\ref{cor:Ms} we have 
\begin{equation}\label{comm ineq T}
 \|[b,T]f\|_{\LPsp}
 \ls
 \|b\|_{\cL_{1,\psi}}\|f\|_{\LPp},
\end{equation}
which is the conclusion.

In the following we show \eqref{mean vanish T}.

\noindent
{\bf Case 1}: First we show 
\eqref{mean vanish T} for all $f\in \LPp(\R^n)$ with compact support.
Let $\supp f\subset B_s=B(0,s)$ with $s\ge1$.
Then $f\in L^p(\R^n)$ and $bf\in L^{p_1}(\R^n)$ for some $1<p_1<p<\infty$
(see Remark~\ref{rem:well def T}).
Since $T$ is bounded on Lebesgue spaces,
we see that both $(bTf)\chi_{B_{2s}}$ and $T(bf)\chi_{B_{2s}}$ are in $L^{1}(\R^n)$
and that 
\begin{equation*}
 \fint_{B_r}(bTf)\chi_{B_{2s}}\to0,
 \quad
 \fint_{B_r}T(bf)\chi_{B_{2s}}\to0
 \quad\text{ as $r\to\infty$}.
\end{equation*}
If $x\not\in B_{2s}$ and $y\in B(0,s)$, 
then $|x|/2\le|x-y|\le3|x|/2$.
By \eqref{CZ1} and \eqref{CZ3} we have
\begin{equation}\label{T sup}
 |Tf(x)|
% \ls
% \int_{B_s}\frac{|f(x)}{|x-y|^n}\,dy
 \ls
 \frac{1}{|x|^n}\|f\|_{L^1},
 \quad
 |T(bf)(x)|
 \ls
 \frac{1}{|x|^n}\|bf\|_{L^1},
 \quad
 x\not\in B_{2s},
\end{equation}
which yields
\begin{equation*}
 b_{B_{2s}}
 \fint_{B_r}(Tf)(1-\chi_{B_{2s}})\to0,
 \quad
 \fint_{B_r}(T(bf))(1-\chi_{B_{2s}})\to0
 \quad\text{ as $r\to\infty$}.
\end{equation*}
Next, we show
\begin{equation}\label{int bTf 0}
 \fint_{B_r}(b-b_{B_{2s}})(Tf)(1-\chi_{B_{2s}})\to0
 \quad\text{ as $r\to\infty$}.
\end{equation}
Then we have \eqref{mean vanish T} 
for all $f\in \LPp(\R^n)$ with compact support.

Now, since $\Psi \in \dtwo$, there exists $p \in (1,\infty)$ such that $\Psi^{-1}(u) \ls u^{1/p} \, (u \le 1)$. Let $\nu = \frac{2p}{2p-1}$, then
\begin{multline*}
 \left|\fint_{B_r}(b-b_{B_{2s}})(Tf)(1-\chi_{B_{2s}})\right|
\\
 \le 
 \left(\fint_{B_r}|b-b_{B_{2s}}|^{\nu'}\right)^{1/\nu'}
 \left(\fint_{B_r} \left|(Tf)(1-\chi_{B_{2s}})\right|^{\nu}\right)^{1/\nu}.
\end{multline*}
From Lemma~\ref{lem:int f-fB}, Remark~\ref{rem:int f-fB} and \eqref{vp psi T} it follows that
\begin{multline}\label{b nu T}
 \left(\fint_{B_r}|b-b_{B_{2s}}|^{\nu'}\right)^{1/\nu'}
 \ls
 \int_{2s}^{r}\frac{\psi(t)}{t}\,dt\,\|b\|_{\cL_{1,\psi}} \\
 \ls
 \psi(r)\log r\,\|b\|_{\cL_{1,\psi}} 
 \ls
 \frac{\Psi^{-1}(\vp(r))}{\Phi^{-1}(\vp(r))}\log r\,\|b\|_{\cL_{1,\psi}}.
\end{multline}
From \eqref{T sup} it follows that
\begin{equation}\label{f nu T}
 \left(\int_{B_r\setminus B_{2s}} |Tf(x)|^{\nu}\,dx \right)^{1/\nu} 
 \ls
 \left(\int_{B_r\setminus B_{2s}}
  \left( \frac{1}{|x|^n}\|f\|_{L^1} \right)^{\nu} \,dx
 \right)^{1/\nu}
 \ls
 \|f\|_{L^1}.
\end{equation}
By \eqref{b nu T} and \eqref{f nu T}
we have
\begin{align*}
 &\left|\fint_{B_r}(b-b_{B_{2s}})(Tf)(1-\chi_{B_{2s}})\right| \\
 &\ls
 \frac{\Psi^{-1}(\vp(r))}{\Phi^{-1}(\vp(r))} \log r\,\|b\|_{\cL_{1,\psi}}
 \frac1{r^{n/\nu}}\,\|f\|_{L^1} 
  =
  \frac{\log r}{r^{n/\nu}}
 \frac{\Psi^{-1}(\vp(r))}{\Phi^{-1}(\vp(r))}
 \,\|b\|_{\cL_{1,\psi}}
 \|f\|_{L^1} \\
 &\ls
 \frac{\log r}{r^{n/\nu}} \frac{\vp(r)^{1/p}}{\vp(r)} \|b\|_{\cL_{1,\psi}} \|f\|_{L^1} 
 =
 \frac{\log r}{r^{\frac{n}{2p}} (r^n\vp(r))^{1-\frac1{p}}} \|b\|_{\cL_{1,\psi}} \|f\|_{L^1} \\
 &\to0
 \quad\text{ as $r\to\infty$},
\end{align*}

Therefore, we have \eqref{mean vanish T} 
and \eqref{comm ineq T} for all $f\in \LPp(\R^n)$ with compact support.

\noindent
{\bf Case 2}: For general $f\in \LPp(\R^n)$, using Case 1, 
we have 
\begin{equation*}
 \|[b,T](f\chi_{B_{2r}})\|_{\LPsp}
 \ls
 \|b\|_{\cL_{1,\psi}}\|f\chi_{B_{2r}}\|_{\LPp}
 \le
 \|b\|_{\cL_{1,\psi}}\|f\|_{\LPp}.
\end{equation*}
Then, by \eqref{fint_B f},
\begin{multline*}
 \fint_{B_r}[b,T](f\chi_{B_{2r}})
 \le
 \Psi^{-1}(\vp(r))\|[b,T](f\chi_{B_{2r}})\|_{\LPsp} 
\\
 \ls
 \Psi^{-1}(\vp(r))
 \|b\|_{\cL_{1,\psi}}\|f\|_{\LPp}.
\end{multline*}
Combining this with Lemma~\ref{lem:int2Bc comm T}, we have
\begin{equation*}
 \fint_{B_r}[b,T]f
 \ls
 \Psi^{-1}(\vp(r)) \|b\|_{\cL_{1,\psi}}\|f\|_{\LPp},
\end{equation*}
which implies \eqref{mean vanish T}.
Therefore, we have \eqref{comm ineq T} for all $f\in \LPp(\R^n)$.
The proof is complete.
\end{proof}

%%%===================================================================
\begin{proof}[\bf Proof of Theorem~\ref{thm:comm T} (ii)]
%%%===================================================================
We use the method by Janson~\cite{Janson1978}.
Since $1/K(z)$ is many times infinitely differentiable in an open set,
we may choose $z_0\ne0$ and $\delta>0$ such that 
$1/K(z)$ can be expressed in the neighborhood $|z-z_0|<2\delta$
as an absolutely convergent Fourier series, $1/K(z)=\sum a_je^{iv_j\cdot z}$.
(The exact form of the vectors $v_j$ is irrelevant.
For example, 
if the cube centered at $z_0$ of side length $4\delta$ is contained in the open set, 
then we can take $v_j=2\pi j/(4\delta)$, $j\in\Z^n$.)

Set $z_1=z_0/\delta$. If $|z-z_1|<2$, we have the expansion
\begin{equation*}
 \frac1{K(z)}=\frac{\delta^{-n}}{K(\delta z)}=\delta^{-n}\sum a_je^{iv_j\cdot\delta z}.
\end{equation*}
Choose now any ball $B=B(x_0,r)$.
Set $y_0=x_0-rz_1$ and $B'=B(y_0,r)$.
Then, if $x\in B$ and $y\in B'$,
\begin{equation*}
 \left|\frac{x-y}r-z_1\right|\le\left|\frac{x-x_0}r\right|+\left|\frac{y-y_0}r\right|\le2.
\end{equation*}
Denote $\sgn(f(x)-f_{B'})$ by $s(x)$. 
Then
\begin{align*}
 &\int_B|b(x)-b_{B'}|\,dx
 =
 \int_B(b(x)-b_{B'})s(x)\,dx \\
 &=
 \frac1{|B'|}\int_B\int_{B'}(b(x)-b(y))s(x)\,dy\,dx \\
 &=
 \frac1{|B'|}\int_{\R^n}\int_{\R^n}
  (b(x)-b(y))\frac{r^nK(x-y)}{K(\frac{x-y}r)}s(x)\chi_{B}(x)\chi_{B'}(y)\,dy\,dx \\
 &=
 \frac{r^n\delta^{-n}}{|B'|}\int_{\R^n}\int_{\R^n}
  (b(x)-b(y))K(x-y) \sum a_je^{iv_j\cdot\delta\frac{x-y}r}
   s(x)\chi_{B}(x)\chi_{B'}(y)\,dy\,dx.
\end{align*}
Here, we set $C=\delta^{-n}|B(0,1)|^{-1}$ and
\begin{equation*}
 g_j(y)=e^{-iv_j\cdot\delta\frac{y}r}\chi_{B'}(y),
 \quad
 h_j(x)=e^{iv_j\cdot\delta\frac{x}r}s(x)\chi_{B}(x).
\end{equation*}
Then
{\allowdisplaybreaks 
\begin{align*}
 &\int_B|b(x)-b_{B'}|\,dx \\
 &=
 C\sum a_j 
 \int_{\R^n}\int_{\R^n}
 (b(x)-b(y))K(x-y) g_j(y)h_j(x) \,dy\,dx \\
 &=
 C\sum a_j \int_{\R^n} ([b,T]g_j)(x)h_j(x)\,dx \\
 &\le
 C\sum |a_j| \int_{\R^n} |([b,T]g_j)(x)||h_j(x)|\,dx \\
 &=
 C\sum |a_j| \int_{B} |([b,T]g_j)(x)|\,dx \\
 &\le
 C\sum |a_j| |B|\Psi^{-1}(\vp(r)) \|[b,T]g_j\|_{\LPsp} \\
 &\le
 C\|[b,T]\|_{\LPp \to \LPsp} |B|\Psi^{-1}(\vp(r))
 \sum |a_j| \|g_j\|_{\LPp}.
\end{align*}
}
By Lemma~\ref{lem:chi norm 2}
we have that
$\|g_j\|_{\LPp} = \|\chi_{B'}\|_{\LPp} \sim \frac1{\Phi^{-1}(\vp(B'))}$.
Then
\begin{equation*}
 \int_B|b(x)-b_{B'}|\,dx 
 \ls
 \|[b,T]\|_{\LPp \to \LPsp}
 |B| \frac{\Psi^{-1}(\vp(B))}{\Phi^{-1}(\vp(B))}.
\end{equation*}
By \eqref{vp psi T inverse} we have
\begin{equation*}
 \frac1{\psi(B)}\fint_B|b(x)-b_{B}|\,dx 
 \le \frac2{\psi(B)}\fint_B|b(x)-b_{B'}|\,dx 
 \ls\|[b,T]\|_{\LPp \to \LPsp}.
\end{equation*}
That is, $\|b\|_{\cL_{1,\psi}}\ls\|[b,T]\|_{\LPp \to \LPsp}$
and we have the conclusion.
\end{proof}

%=====================================================
\begin{proof}[\bf Proof of Theorem~\ref{thm:comm Ir} {\rm (i)}]
%=====================================================
We may assume that $\Phi,\Psi\in\dtwo\cap\ntwo$ and $\Theta\in\ntwo$.
We can choose $\eta\in(1,\infty)$ such that 
$\Phi((\cdot)^{1/\eta})$, $\Psi((\cdot)^{1/\eta})$ and $\Theta((\cdot)^{1/\eta})$ are in $\ntwo$
by Lemma~\ref{lem:eta}.
Then from \eqref{comm Mr A} it follows that 
\begin{equation*}
 \psi(r)^{\eta}\Theta^{-1}(\vp(r))^{\eta}
 \le
 C_1^{\eta} \Psi^{-1}(\vp(r))^{\eta}.
\end{equation*}
Hence, 
by Theorem~\ref{thm:Mr} 
we see that 
$M_{\psi^\eta}$ is bounded from $L^{(\Theta((\cdot)^{1/\eta}),\vp)}(\R^n)$ 
to $L^{(\Psi((\cdot)^{1/\eta}),\vp)}(\R^n)$.
Moreover, as we mentioned just before Theorem~\ref{thm:comm Ir}
$\Ir$ is bounded from $\LPp(\R^n)$ to $\LTp(\R^n)$ by \eqref{comm Ir A}.
Then, using \eqref{theta}, we have
\begin{multline*}
 \left\|\big(M_{\psi^\eta}(|\Ir f|^{\eta})\big)^{1/\eta}\right\|_{\LPsp}
 =
 \left(\left\|M_{\psi^\eta}(|\Ir f|^{\eta})\right\|_{L^{(\Psi((\cdot)^{1/\eta}),\vp)}}\right)^{1/\eta} \\
 \ls
 \left(\left\||\Ir f|^{\eta}\right\|_{L^{(\Theta((\cdot)^{1/\eta}),\vp)}}\right)^{1/\eta} 
 =
 \|\Ir f\|_{\LTp} 
 \ls
 \|f\|_{\LPp}.
\end{multline*}
From \eqref{comm Ir A} and \eqref{comm Mr A} 
it follows that
\begin{equation*}
 (\rho^*(r)\psi(r))^{\eta}\left({\Phi}^{-1}(\vp(r))\right)^{\eta}
 \le
 (C_0C_1)^{\eta} \left(\Psi^{-1}(\vp(r))\right)^{\eta}. 
\end{equation*}
By using Theorem~\ref{thm:Mr},
we have the boundedness of $M_{(\rho^*\psi)^{\eta}}$ 
from $L^{(\Phi((\cdot)^{1/\eta}),\vp)}$ to $L^{(\Psi((\cdot)^{1/\eta}),\vp)}$.
That is,
\begin{align*}
 \left\|\big(M_{(\rho^*\psi)^{\eta}}(|f|^{\eta})\big)^{1/\eta}\right\|_{\LPsp}
 &=
 \left(\left\|M_{(\rho^*\psi)^{\eta}}(|f|^{\eta})\right\|_{L^{(\Psi((\cdot)^{1/\eta}),\vp)}}\right)^{1/\eta} \\
 &\ls
 \left(\left\|| f|^{\eta}\right\|_{L^{(\Phi((\cdot)^{1/\eta}),\vp)}}\right)^{1/\eta} 
 =
 \|f\|_{\LPp}.
\end{align*}
Therefore, 
if we show that, for $B_r=B(0,r)$,
\begin{equation}\label{mean vanish}
 \fint_{B_r}[b,\Ir]f
 \to0
 \quad\text{as}\quad r\to\infty,
\end{equation}
then we have 
\begin{equation}\label{comm ineq}
 \|[b,\Ir]f\|_{\LPsp}
 \ls
 \|b\|_{\cL_{1,\psi}}\|f\|_{\LPp},
\end{equation}
by Corollary~\ref{cor:Ms}.

In the following we show \eqref{mean vanish}.

\noindent
{\bf Case 1}: First we show 
\eqref{mean vanish} for all $f\in \LPp(\R^n)$ with compact support.
Let $\supp f\subset B_s=B(0,s)$ with $s\ge1$.
Then $f\in L^p(\R^n)$ and $bf\in L^{p_1}(\R^n)$ for some $1<p_1<p<\infty$
(see Remark~\ref{rem:well def}).
Since $\frac{\rho(|y|)}{|y|^n}$ is locally integrable with respect to $y$,
we see that $(b\Ir f)\chi_{B_{2s}}$ and $\Ir(bf)\chi_{B_{2s}}$ are in $L^{1}(\R^n)$
and that 
\begin{equation*}
 \fint_{B_r}(b\Ir f)\chi_{B_{2s}}\to0,
 \quad
 \fint_{B_r}\Ir(bf)\chi_{B_{2s}}\to0
 \quad\text{ as $r\to\infty$}.
\end{equation*}
If $x\not\in B_{2s}$ and $y\in B(0,s)$, 
then $|y|<|x-y|$ and $|x|/2\le|x-y|\le3|x|/2$,
\begin{equation}\label{rho sup rho}
 \rho(|x-y|)
 \le
 \sup_{|x|/2\le t\le3|x|/2}\rho(t).
\end{equation}
Then we have
\begin{equation*}
 \frac{\rho(|x-y|)}{|x-y|^n}
 \ls
 \frac{\sup_{|x|/2\le t\le3|x|/2}\rho(t)}{|x|^n}
 \sim
 \sup_{|x|/2\le t\le3|x|/2}\frac{\rho(t)}{t^n},
\end{equation*}
and
\begin{equation*}\label{Irf sup}
 |\Ir f(x)|
 \ls
 \sup_{|x|/2\le t\le3|x|/2}\frac{\rho(t)}{t^n}
 \|f\|_{L^1},
 \quad
 |\Ir(bf)(x)|
 \ls
 \sup_{|x|/2\le t\le3|x|/2}\frac{\rho(t)}{t^n}
 \|bf\|_{L^1}.
\end{equation*}
From almost decreasingness of $t\mapsto\rho(t)/t^{n-\epsilon}$
for some $\epsilon\in(0,n)$,
it follows that $\frac{\rho(t)}{t^n}\to0$ as $t\to\infty$, 
which yields
\begin{equation*}
 b_{B_{2s}}
 \fint_{B_r}(\Ir f)(1-\chi_{B_{2s}})\to0,
 \quad
 \fint_{B_r}(\Ir(bf))(1-\chi_{B_{2s}})\to0
 \quad\text{ as $r\to\infty$}.
\end{equation*}
Next, we show
\begin{equation}\label{int bIrf 0}
 \fint_{B_r}(b-b_{B_{2s}})(\Ir f)(1-\chi_{B_{2s}})\to0
 \quad\text{ as $r\to\infty$}.
\end{equation}
Then we have \eqref{mean vanish} 
for all $f\in \LPp(\R^n)$ with compact support.

Now, since $\Psi \in \dtwo$, 
there exists $p \in (1,\infty)$ such that $\Psi^{-1}(u) \ls u^{1/p}$ $(u \le 1)$. 
Let $\nu = \frac{2p}{2p-1}$, then
\begin{multline*}
 \left|\fint_{B_r}(b-b_{B_{2s}})(\Ir f)(1-\chi_{B_{2s}})\right|
\\
 \le
 \left(\fint_{B_r}|b-b_{B_{2s}}|^{\nu'}\right)^{1/\nu'}
 \left(\fint_{B_r} \left|(\Ir f)(1-\chi_{B_{2s}})\right|^{\nu}\right)^{1/\nu}.
\end{multline*}
From Lemma~\ref{lem:int f-fB}, Remark~\ref{rem:int f-fB} and \eqref{comm Mr A} it follows that
\begin{multline}\label{b nu}
 \left(\fint_{B_r}|b-b_{B_{2s}}|^{\nu'}\right)^{1/\nu'}
 \ls
 \int_{2s}^{r}\frac{\psi(t)}{t}\,dt\,\|b\|_{\cL_{1,\psi}} \\
 \ls
 \psi(r)\log r\,\|b\|_{\cL_{1,\psi}} 
 \ls
 \frac{\Psi^{-1}(\vp(r))}{\Theta^{-1}(\vp(r))}\log r
 \,\|b\|_{\cL_{1,\psi}}.
\end{multline}
For $j=0,1,2,\dots$, from \eqref{rho sup rho} and \eqref{sup rho} it follows that
\begin{align*}
 &\left(\int_{2^{j+2}B_s\setminus2^{j+1}B_s} |\Ir f(x)|^{\nu}\,dx \right)^{1/\nu} \\
 &\ls
 \left(\int_{2^{j+2}B_s\setminus2^{j+1}B_s} \left( \frac{\sup_{|x|/2\le t\le3|x|/2}\rho(t)}{|x|^n}
 \|f\|_{L^1} \right)^{\nu} \,dx \right)^{1/\nu}\\
 &\ls
 (2^js)^{(-n\nu+n)/\nu}\sup_{2^js\le t\le 3\cdot2^{j+1}s}\rho(t) \,\|f\|_{L^1} 
 \ls
 \int_{2^jK_1s}^{3\cdot2^jK_2s}\frac{\rho(t)}{t}\,dt\,\|f\|_{L^1},
\end{align*}
since $s\ge1$.
Take the integer $j_0$ such that $r\le2^{j_0+2}s<2r$.
Then, by \eqref{comm Ir A},
\begin{multline}\label{f nu}
 \left(\fint_{B_r} \left|(\Ir f)(1-\chi_{B_{2s}})\right|^{\nu} \right)^{1/\nu}
 \le
 \frac1{r^{n/\nu}}\sum_{j=0}^{j_0}
  \left(\int_{2^{j+2}B_s\setminus2^{j+1}B_s} |\Ir f|^{\nu} \right)^{1/\nu} \\
 \ls
 \frac1{r^{n/\nu}}\int_0^{3K_2r/2}\frac{\rho(t)}{t}\,dt\,\|f\|_{L^1} 
 \ls
 \frac1{r^{n/\nu}} \frac{\Theta^{-1}(\vp(r))}{\Phi^{-1}(\vp(r))}
 \,\|f\|_{L^1}.
\end{multline}
By \eqref{b nu} and \eqref{f nu}, we have
{\allowdisplaybreaks 
\begin{align*}
 &\left|\fint_{B_r}(b-b_{B_{2s}})(\Ir f)(1-\chi_{B_{2s}})\right| 
\\
 &\ls
 \frac{\Psi^{-1}(\vp(r))}{\Theta^{-1}(\vp(r))}\log r
 \frac1{r^{n/\nu}}\frac{\Theta^{-1}(\vp(r))}{\Phi^{-1}(\vp(r))}
 \,\|b\|_{\cL_{1,\psi}}
 \|f\|_{L^1} 
\\
 &=
 \frac{\log r}{r^{n/\nu}}
 \frac{\Psi^{-1}(\vp(r))}{\Phi^{-1}(\vp(r))}
 \,\|b\|_{\cL_{1,\psi}}
 \|f\|_{L^1} 
\\
 &\ls
 \frac{\log r}{r^{n/\nu}} \frac{\vp(r)^{1/p}}{\vp(r)} \|b\|_{\cL_{1,\psi}} \|f\|_{L^1} 
 =
 \frac{\log r}{r^{\frac{n}{2p}} (r^n\vp(r))^{1-\frac1{p}}} \|b\|_{\cL_{1,\psi}} \|f\|_{L^1} 
\\
 &\to0
 \quad\text{ as $r\to\infty$}.
\end{align*}
}
Therefore, we have \eqref{mean vanish} 
and \eqref{comm ineq} for all $f\in \LPp(\R^n)$ with compact support.

\noindent
{\bf Case 2}: For general $f\in \LPp (\R^n)$, using Case 1, 
we have 
\begin{equation*}
 \|[b,\Ir](f\chi_{B_{2r}})\|_{\LPsp}
 \ls
 \|b\|_{\cL_{1,\psi}}\|f\chi_{B_{2r}}\|_{\LPp}
 \le
 \|b\|_{\cL_{1,\psi}}\|f\|_{\LPp}.
\end{equation*}
Then, by \eqref{fint_B f},
\begin{multline*}
 \fint_{B_r}[b,\Ir](f\chi_{B_{2r}})
 \le
 \Psi^{-1}(\vp(r))\|[b,\Ir](f\chi_{B_{2r}})\|_{\LPsp}
\\
 \ls
 \Psi^{-1}(\vp(r))
 \|b\|_{\cL_{1,\psi}}\|f\|_{\LPp}.
\end{multline*}
Combining this with Lemma~\ref{lem:int2Bc comm}, we have
\begin{equation*}
 \fint_{B_r}[b,\Ir]f
 \ls
 \Psi^{-1}(\vp(r)) \|b\|_{\cL_{1,\psi}}\|f\|_{\LPp},
\end{equation*}
which implies \eqref{mean vanish}.
Therefore, we have \eqref{comm ineq} for all $f\in \LPp(\R^n)$.
The proof is complete.
\end{proof}

%=====================================================
\begin{proof}[\bf Proof of Theorem~\ref{thm:comm Ir} {\rm (ii)}]
%=====================================================
In a similar way to the proof of Theorem~\ref{thm:comm T} (ii), 
we can conclude that
$\|b\|_{\cL_{1,\psi}}\ls\|[b,\Ir]\|_{\LPp\to\LPsp}$,
by calculating $|z|^{n-\alpha}$ instead of $1/K(z)$.
\end{proof}

%To prove Theorems~\ref{thm:comm T dec} and \ref{thm:comm Ir dec}
%we only use Lemma~\ref{lem:gHolder OM} and the boundedness of $T$ and $\Ir$.

%%%===================================================================
\begin{proof}[\bf Proof of Theorem~\ref{thm:comm T dec}]
%%%===================================================================
Let $B_r=B(0,r)$.
By Theorem~\ref{thm:C sim M} we have that,
for every $b\in\cL^{(\Phi_0,\psi)}(\R^n)$, $b_{B_r}$ converges as $r\to\infty$
and $\|b-\dlim_{r\to\infty}b_{B_r}\|_{L^{(\Phi_0,\psi)}}\sim\|b\|_{\cL^{(\Phi_0,\psi)}}$.
Let $b_0=b-\dlim_{r\to\infty}b_{B_r}$.
Then $\|b_0\|_{L^{(\Phi_0,\psi)}} \sim \|b\|_{\cL^{(\Phi_0,\psi)}}$ and $[b,T]f=b_0Tf-T(b_0f)$.
Using the boundedness of $T$ on $\LPp(\R^n)$ and on $L^{(\Psi,\theta)}(\R^n)$
and generalized H\"older's inequality (Lemma~\ref{lem:gHolder OM})
with the assumption \eqref{gHolder OM},
we have 
\begin{align*}
 \|[b,T]f\|_{L^{(\Psi, \theta)}}
 &\le
 \|b_0Tf\|_{L^{(\Psi, \theta)}} + \|T(b_0f)\|_{L^{(\Psi, \theta)}} \\
 &\ls
 \|b_0\|_{L^{(\Phi_0, \psi)}}\|Tf\|_{\LPp} + \|b_0f\|_{L^{(\Psi, \theta)}} \\
 &\ls
 \|b_0\|_{L^{(\Phi_0, \psi)}} \|f\|_{\LPp}
 \sim \|b\|_{\cL^{(\Phi_0,\vp)}}\|f\|_{\LPp}.
\end{align*}
This is the conclusion.
\end{proof}

%%%===================================================================
\begin{proof}[\bf Proof of Theorem~\ref{thm:comm Ir dec}]
%%%===================================================================
We use the same method as the proof of Theorem~\ref{thm:comm T dec}.
For $b\in\cL^{(\Phi_0,\vp)}(\R^n)$, let $b_0=b-\dlim_{r\to\infty}b_{B_r}$.
Then $\|b_0\|_{L^{(\Phi_0,\vp)}}\sim\|b\|_{\cL^{(\Phi_0,\vp)}}$ and $[b,\Ir]f=b_0\Ir f-\Ir(b_0f)$.
As we mentioned just before Theorem~\ref{thm:comm Ir}
$\Ir$ is bounded from $\LPp(\R^n)$ to $L^{(\Theta,\vp)}(\R^n)$
by the assumption \eqref{comm Ir A}.
Moreover, we see that 
$\Ir$ is bounded from $L^{(\Psi_0,\vp)}(\R^n)$ to $L^{(\Psi,\vp)}(\R^n)$,
since
\begin{multline*}
 \int_0^r\frac{\rho(t)}{t}\,dt\,\Psi_0^{-1}(\vp(r))
 +\int_r^{\infty}\frac{\rho(t) \Psi_0^{-1}(\vp(t))}{t}\,dt 
\\
 \sim
 \int_0^r\frac{\rho(t)}{t}\,dt\,\Phi^{-1}(\vp(r))\Phi_0^{-1}(\vp(r))
 +\int_r^{\infty}\frac{\rho(t) \Phi^{-1}(\vp(t))\Phi_0^{-1}(\vp(t))}{t}\,dt
\\
 \ls
 \left(\int_0^r\frac{\rho(t)}{t}\,dt\,\Phi^{-1}(\vp(r))
 +\int_r^{\infty}\frac{\rho(t) \Phi^{-1}(\vp(t))}{t}\,dt \right) \Phi_0^{-1}(\vp(r))
\\
 \ls
 \Theta^{-1}(\vp(r)) \Phi_0^{-1}(\vp(r))
 \ls
 \Psi^{-1}(\vp(r)). 
\end{multline*}
In the above we use the almost decreasingness of $r\mapsto\Phi_0^{-1}(\vp(r))$.
Then, using these boundedness of $\Ir$ 
and generalized H\"older's inequality (Lemma~\ref{lem:gHolder OM}),
we have 
\begin{align*}
 \|[b,\Ir]f\|_{\LPsp}
 &\le
 \|b_0\Ir f\|_{\LPsp} + \|\Ir(b_0f)\|_{\LPsp} \\
 &\ls
 \|b_0\|_{L^{(\Phi_0,\vp)}}\|\Ir f\|_{L^{(\Theta,\vp)}}+\|b_0f\|_{L^{(\Psi_0,\vp)}} \\
 &\ls
 \|b_0\|_{L^{(\Phi_0,\vp)}}\|f\|_{\LPp}
 \sim \|b\|_{\cL^{(\Phi_0,\vp)}}\|f\|_{\LPp}.
\end{align*}
This is the conclusion.
\end{proof}

%%%%===================================================================
%%%%===================================================================
%\section*{Acknowledgement}
%%%%===================================================================
%%%%===================================================================
%The authors would like to thank the referees for their careful reading 
%and many useful comments.
%The third author was supported by Grant-in-Aid for Scientific Research (B), 
%No.~15H03621, Japan Society for the Promotion of Science.

%%%===================================================================
%%%===================================================================
%%%===================================================================

\bigskip

\begin{flushright}
\begin{minipage}{70mm}
\noindent
Minglei Shi \\
Department of Mathematics \\
Ibaraki University \\
Mito, Ibaraki 310-8512, Japan \\
18nd206l@vc.ibaraki.ac.jp \\
stfoursml@gmail.com
\end{minipage}
\\[3ex]
\begin{minipage}{70mm}
\noindent
Ryutaro Arai \\
Department of Mathematics \\
Ibaraki University \\
Mito, Ibaraki 310-8512, Japan \\
ryutaro.arai.math@vc.ibaraki.ac.jp \\
araryu314159@gmail.com
\end{minipage}
\\[3ex]
\begin{minipage}{70mm}
\noindent
Eiichi Nakai \\
Department of Mathematics \\
Ibaraki University \\
Mito, Ibaraki 310-8512, Japan \\
eiichi.nakai.math@vc.ibaraki.ac.jp  
\end{minipage}
\end{flushright}

\end{document}